\tikzset{cd/.style=matrix of math nodes} 
\tikzset{cdar/.style=->,auto}
\tikzset{dar/.style={double,double equal sign distance,-implies}}
\setlist[enumerate,1]{label=\textup{(\arabic*)}}
\setlist[enumerate,2]{label=\textup{(\alph*)}}
\newcommand{\longref}[2]{\hyperref[#2]{#1~\textup{\ref*{#2}}}}
\numberwithin{equation}{section}
\theoremstyle{plain}
\newtheorem{theorem}[equation]{Theorem}
\newtheorem{lemma}[equation]{Lemma}
\newtheorem{proposition}[equation]{Proposition}
\newtheorem{corollary}[equation]{Corollary}
\theoremstyle{definition}
\newtheorem{definition}[equation]{Definition}
\theoremstyle{remark}
\newtheorem{remark}[equation]{Remark}
\newtheorem{example}[equation]{Example}
\newcommand*{\nb}{\nobreakdash}
\newcommand*{\U}{\mathcal U}
\newcommand{\V}{\mathcal{V}}
\newcommand*{\lc}{\mathrm{lc}}
\newcommand*{\Qu}{\mathsf{p}}
\newcommand*{\s}{s} 
\newcommand*{\rg}{r}
\newcommand*{\Cat}[1][C]{\mathcal #1}
\newcommand*{\Bisp}[1][X]{\mathcal #1}
\newcommand{\Gr}[1][G]{\mathcal #1}
\newcommand{\prto}{\twoheadrightarrow}
\newcommand*{\prop}{\mathrm{prop}}
\newcommand*{\Grcat}{\mathfrak{Gr}}
\newcommand*{\Cont}{\mathrm C}
\newcommand*{\Contb}{\mathrm{C_b}}
\newcommand*{\Cst}{\texorpdfstring{\textup C^*}{C*}}
\newcommand*{\Star}{$^*$\nobreakdash-}
\newcommand*{\defeq}{\mathrel{\vcentcolon=}}
\newcommand*{\congto}{\xrightarrow\sim}
\newcommand*{\id}{\mathrm{id}}
\newcommand{\C}{\mathbb{C}}
\newcommand{\N}{\mathbb{N}}
\newcommand{\Bis}{\mathcal{S}}
\newcommand{\IS}{\mathcal{I}}
\newcommand{\Grcomp}{\circ}
\newcommand{\gps}[1]{{#1}\text{-}\mathsf{Space}}
\newcommand{\pgps}[1]{{#1}\text{-}\mathsf{Space}_\prop}
\newcommand{\rscc}[1]{\beta_{#1}}
\newcommand{\rsc}{\beta_{\Gr^0}}
\DeclarePairedDelimiter{\norm}{\lVert}{\rVert}
\DeclarePairedDelimiterX{\braket}[2]{\langle}{\rangle}{#1\,\delimsize\vert\,\mathopen{}#2}
\DeclarePairedDelimiterX{\setgiven}[2]{\{}{\}}{#1\,{:}\,\mathopen{}#2}
\begin{document}
\title[Existence of groupoid models]{Existence of groupoid models for diagrams of groupoid correspondences}
\author{Joanna Ko}
\email{joanna.ko.maths@gmail.com}
\author{Ralf Meyer}
\email{rmeyer2@uni-goettingen.de}
\address{Mathematisches Institut\\
  Universität Göttingen\\
  Bunsenstraße 3--5\\
  37073 Göttingen\\
  Germany}

\keywords{étale groupoid; groupoid correspondence; bicategory;
  terminal object; relative Stone--Čech compactification}

\begin{abstract}
  This article continues the study of diagrams in the bicategory of
  étale groupoid correspondences.  We prove that any such diagram
  has a groupoid model and that the groupoid model is a locally
  compact étale groupoid if the diagram is locally compact and
  proper.  A key tool for this is the relative Stone--Čech
  compactification for spaces over a locally compact Hausdorff
  space.
\end{abstract}
\subjclass[2020]{18A30; 18N10; 18B40}

\maketitle


\section{Introduction}
\label{sec:intro}

Many interesting \(\Cst\)\nb-algebras may be realised as
\(\Cst\)\nb-algebras of étale, locally compact groupoids.  Examples
are the \(\Cst\)\nb-algebras associated to group actions on spaces,
(higher-rank) graphs, and self-similar groups.  These examples of
\(\Cst\)\nb-algebras are defined by some combinatorial or dynamical
data.  This data is interpreted in
\cites{Antunes-Ko-Meyer:Groupoid_correspondences,
  Meyer:Diagrams_models} as a diagram in a certain bicategory, whose
objects are étale groupoids and whose arrows are called groupoid
correspondences.  A groupoid correspondence is a space with
commuting actions of the two groupoids, subject to some conditions.
In favourable cases, the \(\Cst\)\nb-algebra associated to such a
diagram is a groupoid \(\Cst\)\nb-algebra of a certain étale
groupoid built from the diagram.  A candidate for this groupoid is
proposed in~\cite{Meyer:Diagrams_models}, where it is called the
groupoid model of the diagram.

Here we prove two important results about groupoid models.  First,
any diagram of groupoid correspondences has a groupoid model.
Secondly, the groupoid model is a locally compact groupoid provided
the diagram is proper and consists of locally compact groupoid
correspondences.  The latter is crucial because the groupoid
\(\Cst\)\nb-algebra of an étale groupoid is only defined if it is
locally compact.

By the results in~\cite{Meyer:Diagrams_models}, the groupoid model
exists if and only if the category of actions of the diagram on
spaces defined in~\cite{Meyer:Diagrams_models} has a terminal
object, and then it is unique up to isomorphism.  To show that such
a terminal diagram action exists, we prove that the category of
actions is cocomplete and has a coseparating set of objects; this
criterion is also used to prove the Special Adjoint Functor Theorem.

Proving that the groupoid model is locally compact is more
challenging.  The key ingredient here is the relative
Stone--\v{C}ech compactification.  This is defined for a space~\(Y\)
with a continuous map to a locally compact Hausdorff ``base
space''~\(B\), and produces another space over~\(B\) that is proper
in the sense that the map to~\(B\) is proper and its underlying
space is Hausdorff.  If~\(B\) is a point, then the relative
Stone--\v{C}ech compactification becomes the usual Stone--\v{C}ech
compactification.  An action of a diagram on a space~\(Y\) contains
a map \(Y\to \Gr^0\) for a certain space~\(\Gr^0\), which is locally
compact and Hausdorff if and only if the diagram is locally compact.
If the diagram is proper, then the action on~\(Y\) extends uniquely
to an action on the relative Stone--\v{C}ech compactification.
Since the relative Stone--\v{C}ech compactification is a Hausdorff
space with a proper map to the locally compact Hausdorff
space~\(\Gr^0\), it is itself a locally compact Hausdorff space.
Then an abstract nonsense argument shows that the relative
Stone--\v{C}ech compactification of a universal action must be
homeomorphic to the universal action.  This shows that the universal
action lives on a locally compact Hausdorff space that is proper
over~\(\Gr^0\).  As a consequence, this space is compact
if~\(\Gr^0\) is compact.

The main result in this article answers an important, but technical
question in the previous article~\cite{Meyer:Diagrams_models}.
Therefore, we assume that the reader has already
seen~\cite{Meyer:Diagrams_models} and we do not attempt to make this
article self-contained.  In \longref{Section}{sec:preparation}, we
only recall the most crucial results from
\cites{Antunes-Ko-Meyer:Groupoid_correspondences,
  Meyer:Diagrams_models}.  In
\longref{Section}{sec:general_existence}, we prove that any diagram
has a groupoid model --~not necessarily Hausdorff or locally
compact.  In \longref{Section}{sec:relative_SC}, we introduce the
relative Stone--\v{C}ech compactification and prove some properties
that we are going to need.  In \longref{Section}{sec:extend_action},
we prove that an action of an étale groupoid or of a diagram of
proper, locally compact étale groupoid correspondences extends
canonically to the relative Stone--\v{C}ech compactification.  In
\longref{Section}{sec:proper_model}, we use this to prove that the
universal action of such a diagram lives on a space that is
Hausdorff, locally compact, and proper over~\(\Gr^0\).  To conclude,
we discuss two examples.  One of them shows that the groupoid model
may fail to be locally compact if the groupoid correspondences in
the underlying diagram are not proper.

\section{Preparations}
\label{sec:preparation}

In this section, we briefly recall the definition of the bicategory
of groupoid correspondences, diagrams of groupoid correspondences,
their actions on spaces, and the universal action of a diagram.  We
describe actions of diagrams through slices.  More details may be
found in \cites{Antunes-Ko-Meyer:Groupoid_correspondences,
  Meyer:Diagrams_models}.

We describe a topological groupoid~\(\Gr\) by topological spaces
\(\Gr\) and \(\Gr^0\subseteq\Gr\) of arrows and objects with continuous range
and source maps \(\rg,\s\colon \Gr \rightrightarrows \Gr^0\), a
continuous multiplication map
\(\Gr\times_{\s,\Gr^0,\rg} \Gr \to \Gr\), \((g,h)\mapsto g\cdot h\),
such that each object has a unit arrow and each arrow has an inverse
with the usual algebraic properties and the unit map and the
inversion are continuous as well.  We tacitly assume all groupoids
to be \emph{étale}, that is, \(\s\) and~\(\rg\) are local
homeomorphisms.  This implies that each arrow \(g\in\Gr\) has an
open neighbourhood \(\U\subseteq \Gr\) such that \(\s|_\U\)
and~\(\rg|_\U\) are homeomorphisms onto open subsets of~\(\Gr^0\).
Such an open subset is called a \emph{slice}.

\begin{definition}
  An (étale) groupoid~\(\Gr\) is called \emph{locally compact} if
  its object space~\(\Gr^0\) is Hausdorff and locally compact.
\end{definition}

If~\(\Gr\) is a locally compact groupoid, then its arrow
space~\(\Gr\) is locally compact and locally Hausdorff, but it need
not be Hausdorff.  We only know that each slice \(\U\subseteq \Gr\)
is Hausdorff locally compact because it is homeomorphic to an open
subset in~\(\Gr^0\).  As in~\cite{Meyer:Diagrams_models}, we allow
groupoids that are not locally compact.  We need this for the
general existence result for groupoid models.

\begin{definition}[\cite{Meyer:Diagrams_models}*{Definitions 2.7--9}]
  \label{def:Bibundles}
  Let \(\Gr[H]\) and~\(\Gr\) be (étale) groupoids.  An
  \textup{(}étale\textup{)} \emph{groupoid correspondence}
  from~\(\Gr\) to~\(\Gr[H]\), denoted
  \(\Bisp\colon \Gr[H]\leftarrow \Gr\), is a space~\(\Bisp\) with
  commuting actions of \(\Gr[H]\) on the left and~\(\Gr\) on the
  right, such that the right anchor map \(\s\colon \Bisp\to \Gr^0\)
  is a local homeomorphism and the right \(\Gr\)\nb-action is basic.
  A correspondence \(\Bisp\colon \Gr[H]\leftarrow \Gr\) is
  \emph{proper} if the map \(\rg_*\colon \Bisp/\Gr\to \Gr[H]^0\)
  induced by~\(\rg\) is proper.
  Let \(\Gr[H]\) and~\(\Gr\) be locally compact groupoids.  A
  \emph{locally compact groupoid correspondence}
  \(\Bisp\colon \Gr[H]\leftarrow \Gr\) is a groupoid
  correspondence~\(\Bisp\) such that~\(\Bisp/\Gr\) is Hausdorff.
\end{definition}

The ``groupoids'' and ``groupoid correspondences'' as defined
in~\cite{Antunes-Ko-Meyer:Groupoid_correspondences} are the
``locally compact groupoids'' and the ``locally compact groupoid
correspondences'' in the notation in this article.

\begin{definition}[\cite{Antunes-Ko-Meyer:Groupoid_correspondences}*{Definition~7.2}]
  \label{def:correspondence_slices}
  Let \(\Bisp\colon \Gr[H]\leftarrow \Gr\) be a groupoid
  correspondence.  A \emph{slice} of~\(\Bisp\) is an open subset
  \(\U\subseteq \Bisp\) such that both \(\s\colon \Bisp \to \Gr^0\)
  and the orbit space projection \(\Qu\colon \Bisp\prto \Bisp/\Gr\)
  are injective on~\(\U\).  Let \(\Bis(\Bisp)\) be the set of all
  slices of~\(\Bisp\).
\end{definition}

Let \(\Bisp\colon \Gr[H]\leftarrow \Gr\) be a groupoid
correspondence.  Then the slices of~\(\Bisp\) form a basis for the
topology of~\(\Bisp\).

Groupoid correspondences may be composed, and this gives rise to a
bicategory~\(\Grcat\)
(see~\cite{Antunes-Ko-Meyer:Groupoid_correspondences}).  We only
need this structure to talk about bicategory homomorphisms
into~\(\Grcat\).  Such a homomorphism is described more concretely
in~\cite{Meyer:Diagrams_models}:

\begin{proposition}[\cite{Meyer:Diagrams_models}*{Proposition 3.1}]
  \label{pro:diagrams_in_Grcat}
  Let~\(\Cat\) be a category.  A \emph{\(\Cat\)\nb-shaped diagram of
    groupoid correspondences} \(F\colon \Cat\to\Grcat\) is given by
  \begin{enumerate}
    \item groupoids~\(\Gr_x\) for all objects~\(x\) of~\(\Cat\);
    \item correspondences \(\Bisp_g\colon \Gr_x\leftarrow \Gr_y\) for all
    arrows \(g\colon x\leftarrow y\) in~\(\Cat\);
    \item isomorphisms of correspondences \(\mu_{g,h}\colon
    \Bisp_g\Grcomp_{\Gr_y} \Bisp_h\congto \Bisp_{g h}\) for all pairs of
    composable arrows \(g\colon z\leftarrow y\), \(h\colon y\leftarrow
    x\) in~\(\Cat\);
  \end{enumerate}
  such that
  \begin{enumerate}[label=\textup{(\ref*{pro:diagrams_in_Grcat}.\arabic*)},
    leftmargin=*,labelindent=0em]
    \item \label{en:diagrams_in_Grcat_1} \(\Bisp_x\) for an object~\(x\)
    of~\(\Cat\) is the identity correspondence~\(\Gr_x\) on~\(\Gr_x\);
    \item \label{en:diagrams_in_Grcat_2}
    \(\mu_{g,y}\colon \Bisp_g \Grcomp_{\Gr_y} \Gr_y \congto \Bisp_g\)
    and
    \(\mu_{x,g}\colon \Gr_x \Grcomp_{\Gr_x} \Bisp_g \congto \Bisp_g\)
    for an arrow \(g\colon x\leftarrow y\)
    in~\(\Cat\)
    are the canonical isomorphisms;
    \item \label{en:diagrams_in_Grcat_3} for all composable arrows
    \(g_{01}\colon x_0\leftarrow x_1\), \(g_{12}\colon x_1\leftarrow
    x_2\), \(g_{23}\colon x_2\leftarrow x_3\) in~\(\Cat\), the
    following diagram commutes:
    \begin{equation}
      \label{eq:coherence_category-diagram}
      \begin{tikzpicture}[yscale=1.5,xscale=3,baseline=(current bounding
      box.west)]
        \node (m-1-1) at (144:1)
        {\((\Bisp_{g_{01}}\Grcomp_{\Gr_{x_1}} \Bisp_{g_{12}})
          \Grcomp_{\Gr_{x_2}} \Bisp_{g_{23}}\)};
        \node (m-1-1b) at (216:1) {\(\Bisp_{g_{01}}\Grcomp_{\Gr_{x_1}}
          (\Bisp_{g_{12}}\Grcomp_{\Gr_{x_2}} \Bisp_{g_{23}})\)};
        \node (m-1-2) at (72:1)
        {\(\Bisp_{g_{02}}\Grcomp_{\Gr_{x_2}}\Bisp_{g_{23}}\)};
        \node (m-2-1) at (288:1)
        {\(\Bisp_{g_{01}}\Grcomp_{\Gr_{x_1}}\Bisp_{g_{13}}\)};
        \node (m-2-2) at (0:.8) {\(\Bisp_{g_{03}}\)};
        \draw[dar] (m-1-1) -- node[swap] {\(\scriptstyle\cong\)} node
        {\scriptsize\textup{associator}} (m-1-1b);
        \draw[dar] (m-1-1.north) -- node[very near end]
        {\(\scriptstyle\mu_{g_{01},g_{12}}\Grcomp_{\Gr_{x_2}}\id_{\Bisp_{g_{23}}}\)}
         (m-1-2.west);
        \draw[dar] (m-1-1b.south) -- node[swap,very near end]
        {\(\scriptstyle\id_{\Bisp_{g_{01}}}\Grcomp_{\Gr_{x_1}}\mu_{g_{12},g_{23}}\)}
        (m-2-1.west);
        \draw[dar] (m-1-2.south) -- node[inner sep=0pt]
        {\(\scriptstyle\mu_{g_{02},g_{23}}\)} (m-2-2);
        \draw[dar] (m-2-1.north) -- node[swap,inner sep=1pt]
        {\(\scriptstyle\mu_{g_{01},g_{13}}\)} (m-2-2);
      \end{tikzpicture}
    \end{equation}
    here \(g_{02}\defeq g_{01}\circ g_{12}\), \(g_{13}\defeq
    g_{12}\circ g_{23}\), and \(g_{03}\defeq g_{01}\circ g_{12}\circ
    g_{23}\).
  \end{enumerate}
\end{proposition}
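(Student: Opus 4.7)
The statement amounts to unpacking the definition of a homomorphism of bicategories $F\colon \Cat\to\Grcat$, where the ordinary category~$\Cat$ is regarded as a locally discrete bicategory (all $2$\nb-cells are identities). I would proceed by translating the general pseudofunctor data into concrete data valued in~$\Grcat$, and then matching the pseudofunctor axioms with conditions~\ref{en:diagrams_in_Grcat_1}--\ref{en:diagrams_in_Grcat_3}.

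First, I would extract from a homomorphism~$F$ the following data: groupoids $\Gr_x\defeq F(x)$ for each object, correspondences $\Bisp_g\defeq F(g)$ for each arrow, invertible compositor $2$\nb-cells $\mu_{g,h}\colon \Bisp_g\Grcomp \Bisp_h\congto \Bisp_{gh}$ for each composable pair, and invertible unitor $2$\nb-cells $\epsilon_x\colon \Gr_x\congto \Bisp_{\id_x}$ for each object. Any homomorphism from a locally discrete source can be replaced by an equivalent strictly unital one by conjugating the $\Bisp_{\id_x}$ with~$\epsilon_x$; this normalization yields $\Bisp_x=\Gr_x$ with $\epsilon_x=\id$, which is condition~\ref{en:diagrams_in_Grcat_1}. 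With this normalization in force, the two triangle coherences of a pseudofunctor reduce to the requirement that $\mu_{g,y}$ and $\mu_{x,g}$ coincide with the canonical unit isomorphisms of the bicategory~$\Grcat$, giving condition~\ref{en:diagrams_in_Grcat_2}.

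Second, I would compare the pentagon coherence for~$F$ applied to a composable triple $(g_{01},g_{12},g_{23})$ in~$\Cat$ with diagram~\eqref{eq:coherence_category-diagram}. Since in~$\Cat$ the various triple composites $g_{01}\circ g_{12}\circ g_{23}$ are literally equal, the pentagon relating the associator of~$\Grcat$ to the compositors $\mu$ becomes precisely the stated diagram, which is condition~\ref{en:diagrams_in_Grcat_3}. Conversely, given data satisfying (1)--(3), I would define $F$ on objects and arrows by $\Gr_x,\Bisp_g$ and prescribe its structure $2$\nb-cells to be the $\mu_{g,h}$ together with identity unitors; conditions~(1)--(3) then translate back directly into the pseudofunctor axioms.

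The main obstacle is purely bookkeeping: one has to keep track of the canonical associators and unitors of~$\Grcat$ (composition of groupoid correspondences being only associative and unital up to coherent isomorphism) and verify that, under the strictly unital normalization, the pentagon and triangle identities are equivalent to the three displayed conditions. Once this careful tracking is carried out, the assignment between homomorphisms $F\colon \Cat\to\Grcat$ and triples $(\Gr_x,\Bisp_g,\mu_{g,h})$ is a bijection up to the evident notion of equivalence, which is the content of the proposition.
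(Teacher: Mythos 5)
The paper offers no proof of this proposition: it is quoted verbatim from the earlier reference (Proposition~3.1 there), and the argument given in that source is precisely the unpacking of a homomorphism of bicategories from a locally discrete source that you describe --- extracting compositors and unitors, normalising to strict unitality to obtain \ref{en:diagrams_in_Grcat_1} and \ref{en:diagrams_in_Grcat_2}, and identifying the pentagon coherence with \ref{en:diagrams_in_Grcat_3}. Your proposal is correct and follows essentially the same route.
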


\begin{definition}[\cite{Meyer:Diagrams_models}*{Definition 3.8}]
  Let~\(\Cat\) be a category.  A diagram of groupoid correspondences
  \(F\colon \Cat\to\Grcat\) described by the data
  \((\Gr_x,\Bisp_g,\mu_{g,h})\) is \emph{proper} if all the groupoid
  correspondences~\(\Bisp_g\) are proper.  It is \emph{locally
    compact} if all the groupoids~\(\Gr_x\) and the
  correspondences~\(\Bisp_g\) are locally compact.
\end{definition}

\begin{definition}[\cite{Meyer:Diagrams_models}*{Definition 4.5}]
  \label{def:diagram_dynamical_system}
  An \emph{\(F\)\nb-action} on a space~\(Y\) consists of
  \begin{itemize}
  \item a partition \(Y = \bigsqcup_{x\in\Cat^0} Y_x\) into clopen
    subsets;
    \item continuous maps \(\rg\colon Y_x \to \Gr_x^0\);
    \item open, continuous, surjective maps \(\alpha_g\colon \Bisp_g
    \times_{\s,\Gr_x^0,\rg} Y_x \to Y_{x'}\) for arrows \(g\colon
    x'\leftarrow x\) in~\(\Cat\), denoted multiplicatively as
    \(\alpha_g(\gamma,y) = \gamma\cdot y\);
  \end{itemize}
  such that
  \begin{enumerate}[label=\textup{(\ref*{def:diagram_dynamical_system}.\arabic*)},
    leftmargin=*,labelindent=0em]
    \item \label{en:diagram_dynamical_system1}
    \(\rg(\gamma_2\cdot y) = \rg(\gamma_2)\) and \(\gamma_1\cdot
    (\gamma_2\cdot y) = (\gamma_1 \cdot \gamma_2)\cdot y\) for
    composable arrows \(g_1,g_2\) in~\(\Cat\), \(\gamma_1\in
    \Bisp_{g_1}\), \(\gamma_2\in \Bisp_{g_2}\), and \(y\in
    Y_{\s(g_2)}\) with \(\s(\gamma_1) = \rg(\gamma_2)\),
    \(\s(\gamma_2) = \rg(y)\);
    \item \label{en:diagram_dynamical_system2}
    if \(\gamma\cdot y = \gamma'\cdot y'\) for \(\gamma,\gamma'\in
    \Bisp_g\), \(y,y'\in Y_{\s(g)}\), there is \(\eta\in \Gr_{\s(g)}\)
    with \(\gamma' = \gamma\cdot \eta\) and \(y = \eta\cdot y'\);
    equivalently, \(\Qu(\gamma)=\Qu(\gamma')\) for the orbit space
    projection \(\Qu\colon \Bisp_g \to \Bisp_g/\Gr_{\s(g)}\)
    and \(y = \braket{\gamma}{\gamma'} y'\).
  \end{enumerate}
\end{definition}

\begin{definition}[\cite{Meyer:Diagrams_models}*{Definition 4.13}]
  \label{def:universal_F-action}
  An \(F\)\nb-action~\(\Omega\) is \emph{universal} if for any
  \(F\)\nb-action~\(Y\), there is a unique \(F\)\nb-equivariant map
  \(Y\to \Omega\).
\end{definition}

\begin{definition}[\cite{Meyer:Diagrams_models}*{Definition 4.13}]
  \label{def:universal_action}
  A \emph{groupoid model for \(F\)\nb-actions} is an étale
  groupoid~\(\Gr[U]\) with natural bijections between the sets of
  \(\Gr[U]\)\nb-actions and \(F\)\nb-actions on~\(Y\) for all
  spaces~\(Y\).
\end{definition}

It follows from~\cite{Meyer:Diagrams_models}*{Proposition~5.12} that
a diagram has a groupoid model if and only if it has a universal
\(F\)\nb-action.  By definition, an \(F\)\nb-action is universal if
and only if it is terminal in the category of \(F\)\nb-actions.  Our
first goal below will be to prove that any diagram of groupoid
correspondences has a universal \(F\)\nb-action and hence also a
groupoid model.  The universal action and the groupoid model of a
diagram are unique up to canonical isomorphism if they exist (see
\cite{Meyer:Diagrams_models}*{Proposition~4.16}).

A key point in our construction of the universal \(F\)\nb-action is
an alternative description of an \(F\)\nb-action, which uses partial
homeomorphisms associated to slices of the groupoid correspondences
in the diagram.

Let \(\Bisp\colon \Gr[H]\leftarrow \Gr\) be a groupoid
correspondence and let \(\U,\V\subseteq \Bisp\) be slices.  Recall
that \(\braket{x}{y}\) for \(x,y\in\Bisp\) with \(\Qu(x) = \Qu(y)\)
is the unique arrow in~\(\Gr\) with \(x\cdot \braket{x}{y} = y\).
The subset
\[
  \braket{\U}{\V} \defeq
  \setgiven{\braket{x}{y}\in\Gr}{x\in\U,\ y\in\V,\ \Qu(x)=\Qu(y)}
\]
is a slice in the groupoid~\(\Gr\) by
\cite{Antunes-Ko-Meyer:Groupoid_correspondences}*{Lemma~7.7}.  Next,
let \(\Bisp\colon \Gr[H]\leftarrow \Gr\) and
\(\Bisp[Y]\colon \Gr\leftarrow \Gr[K]\) be groupoid correspondences
and let \(\U\subseteq \Bisp\) and \(\V\subseteq \Bisp[Y]\) be
slices.  Then
\[
  \U \cdot \V\defeq
  \setgiven{[x,y] \in \Bisp\Grcomp_{\Gr} \Bisp[Y]}{x\in\U,\
    y\in\V,\ \s(x) = \rg(y)}
\]
is a slice in the composite groupoid correspondence
\(\Bisp\Grcomp_{\Gr} \Bisp[Y]\) by
\cite{Antunes-Ko-Meyer:Groupoid_correspondences}*{Lemma~7.14}.

Let~\(F\) be a diagram of groupoid correspondences.  Let \(\Bis(F)\)
be the set of all slices of the correspondences~\(\Bisp_g\) for all
arrows \(g\in\Cat\), modulo the relation that we identify the empty
slices of \(\Bis(\Bisp_g)\) for all \(g\in \Cat\).  Given composable
arrows \(g,h\in\Cat\) and slices \(\U\subseteq \Bisp_g\),
\(\V\subseteq \Bisp_h\), then \(\U\V \defeq \mu_{g,h}(\U\cdot \V)\)
is a slice in~\(\Bisp_{g h}\).  If \(g,h\) are not composable, then
we let \(\U\V\) be the empty slice~\(\emptyset\).  This
turns~\(\Bis(F)\) into a semigroup with zero element~\(\emptyset\).

\begin{definition}
  Let~\(Y\) be a topological space.  A \emph{partial homeomorphism}
  of~\(Y\) is a homeomorphism between two open subsets of~\(Y\).
  These are composed by the obvious formula: if \(f,g\) are partial
  homeomorphisms of~\(Y\), then \(f g\) is the partial homeomorphism
  of~\(Y\) that is defined on \(y\in Y\) if and only if \(g(y)\) and
  \(f(g(y))\) are defined, and then \((f g)(y) \defeq f(g(y))\).
  If~\(f\) is a partial homeomorphism of~\(Y\), we let~\(f^*\) be
  its ``partial inverse'', defined on the image of~\(f\) by
  \(f^*(f(y)) = y\) for all \(y\) in the domain of~\(f\).
\end{definition}

Let~\(Y\) with the partition \(Y = \bigsqcup_{x\in\Cat^0} Y_x\) be
an \(F\)\nb-action.  Then slices in \(\Bis(F)\) act on~\(Y\) by
partial homeomorphisms.
For an arrow \(g\colon x\leftarrow x'\) in~\(\Cat\), a slice
\(\U\subseteq \Bisp_g\) acts on~\(Y\) by a partial homeomorphism
\[
\vartheta(\U) \colon Y_{x'} \supseteq \rg^{-1}(\s(\U)) \to Y_x,
\]
which maps \(y \in Y_{x'}\) with \(\rg(y) \in \s(\U)\) to
\(\gamma\cdot y\) for the unique \(\gamma\in \U\) with
\(\s(\gamma)=\rg(y)\).  The following lemmas describe
\(F\)\nb-actions and \(F\)\nb-equivariant maps through these partial
homeomorphisms.

\begin{lemma}[\cite{Meyer:Diagrams_models}*{Lemma~5.3}]
  \label{lem:F-action_from_theta}
  Let~\(Y\) be a space and let
  \(\rg\colon Y\to \bigsqcup_{x\in\Cat^0} \Gr_x^0\) and
  \(\vartheta\colon \Bis(F)\to I(Y)\) be maps.  These come from an
  \(F\)\nb-action on~\(Y\) if and only if
  \begin{enumerate}[label=\textup{(\ref*{lem:F-action_from_theta}.\arabic*)},
    leftmargin=*,labelindent=0em]
  \item \label{en:F-action_from_theta1}%
    \(\vartheta(\U \V) = \vartheta(\U)\vartheta(\V)\)
    for all \(\U,\V\in \Bis(F)\);
  \item \label{en:F-action_from_theta2}%
    \(\vartheta(\U_1)^*\vartheta(\U_2) =
    \vartheta(\braket{\U_1}{\U_2})\)
    for all \(g\in\Cat\), \(\U_1,\U_2\in\Bis(\Bisp_g)\);
  \item \label{en:F-action_from_theta3}%
    the images of~\(\vartheta(\U)\) for \(\U\in\Bisp_g\) cover
    \(Y_{\rg(g)} \defeq \rg^{-1}(\Gr_{\rg(g)}^0)\) for each
    \(g\in\Cat\);
  \item \label{en:F-action_from_theta5}%
    \(\rg\circ \vartheta(\U) = \U_\dagger\circ\rg\) as partial maps
    \(Y \to \Gr^0\) for any \(\U\in\Bis(F)\).
  \end{enumerate}
  The corresponding \(F\)\nb-action on~\(Y\) is unique if it exists,
  and it satisfies
  \begin{enumerate}[label=\textup{(\ref*{lem:F-action_from_theta}.\arabic*)},
    leftmargin=*,labelindent=0em,resume]
  \item \label{en:F-action_from_theta4}%
    for \(U\subseteq \Gr_x^0\) open, \(\vartheta(U)\) is the
    identity map on~\(\rg^{-1}(U)\);
  \item \label{en:F-action_from_theta6}%
    for any \(\U\in\Bis(F)\), the domain of \(\vartheta(\U)\) is
    \(\rg^{-1}(\s(\U))\).
  \end{enumerate}
\end{lemma}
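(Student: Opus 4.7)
The plan is to prove both implications together with uniqueness. For the \emph{only if} direction, assume an $F$\nb-action on~$Y$ and let $\vartheta(\U)$ be the partial homeomorphism defined in the paragraph preceding the lemma. Property~(1) follows from axiom~\ref{en:diagram_dynamical_system1} together with the definition of the product~$\U\V$; (2) encodes axiom~\ref{en:diagram_dynamical_system2}, since evaluating the left-hand side at~$y$ produces $\braket{\gamma_1}{\gamma_2}\cdot y$ for the unique $\gamma_i\in\U_i$ with $\s(\gamma_i)=\rg(y)$. Property~(3) is precisely the surjectivity of~$\alpha_g$, and~(4) is the anchor compatibility $\rg(\gamma\cdot y)=\rg(\gamma)$. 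The supplementary items~(5) and~(6) are then immediate from the definition of~$\vartheta(\U)$.

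For the \emph{if} direction, I would first derive~(5) and~(6) from~(1)--(4). Property~(6) is immediate from~(4): as an equation of partial maps, $\rg\circ\vartheta(\U)=\U_\dagger\circ\rg$ forces the two domains to coincide, giving $\Dom\vartheta(\U)=\rg^{-1}(\s(\U))$. For~(5), note that $U\cdot U=U$ for $U\subseteq\Gr_x^0$ open, by injectivity of the source on the slice~$U$, so~(1) yields $\vartheta(U)^2=\vartheta(U)$; an idempotent partial homeomorphism must be the identity on its domain, which is $\rg^{-1}(U)$ by~(6).

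Next I would construct the action: for an arrow $g\colon x\leftarrow x'$ and a pair $(\gamma,y)\in\Bisp_g\times Y_{x'}$ with $\s(\gamma)=\rg(y)$, pick any slice $\U\in\Bis(\Bisp_g)$ containing~$\gamma$ and set $\gamma\cdot y\defeq\vartheta(\U)(y)$. The crucial step is well-definedness: if $\gamma\in\U_1\cap\U_2$, let $V\defeq\U_1\cap\U_2$, which is again a slice. Injectivity of the source on each~$\U_i$ forces the slice product $\U_i\cdot\s(V)$ to equal~$V$, and then~(1) yields $\vartheta(V)=\vartheta(\U_i)\circ\vartheta(\s(V))$; since $\vartheta(\s(V))$ is the identity on $\rg^{-1}(\s(V))\ni y$ by~(5), the three partial maps $\vartheta(\U_1)$, $\vartheta(\U_2)$, $\vartheta(V)$ all agree at~$y$.

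To finish, continuity, openness, and surjectivity of the resulting $\alpha_g$ follow from the analogous properties of each $\vartheta(\U)$ together with~(3) and the fact that slices form a basis for the topology of~$\Bisp_g$; axioms~\ref{en:diagram_dynamical_system1} and~\ref{en:diagram_dynamical_system2} of \longref{Definition}{def:diagram_dynamical_system} then translate directly from~(1) and~(2) via the slice formulas for $\U\V$ and $\braket{\U_1}{\U_2}$. Uniqueness is clear because $\gamma\cdot y$ is forced once any slice $\U\ni\gamma$ is chosen. The main technical hurdle I foresee is the well-definedness step together with the bootstrapping of~(5) and~(6); both rely on slice-theoretic identities and the source-injectivity of slices rather than on any substantial geometric input.
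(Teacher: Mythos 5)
This lemma is quoted from \cite{Meyer:Diagrams_models}*{Lemma~5.3}; the present paper does not prove it, so there is no in-paper proof to compare your attempt against. Judged on its own terms, your sketch is essentially the standard argument and I see no genuine gap. The two places where all the real work happens are correctly identified and correctly executed: the bootstrapping of the supplementary properties (your derivation of the domain property from the partial-map equation \(\rg\circ\vartheta(\U)=\U_\dagger\circ\rg\), and the observation that \(U\cdot U=U\) for open \(U\subseteq\Gr_x^0\) makes \(\vartheta(U)\) an idempotent injective partial map, hence the identity on its domain), and the well-definedness of \(\gamma\cdot y\), where the identity \(\U_i\cdot \s(V)=V\) for \(V=\U_1\cap\U_2\) is exactly the right slice-theoretic input. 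Two points deserve slightly more care than your phrase ``translate directly'' suggests. First, recovering axiom~\ref{en:diagram_dynamical_system2} from \ref{en:F-action_from_theta2} requires, after producing the element \(\eta\in\braket{\U}{\U'}\) with \(\s(\eta)=\rg(y')\) and \(y=\eta\cdot y'\), a short extra argument (using source-injectivity on the slices \(\U\ni\gamma\) and \(\U'\ni\gamma'\)) to see that this~\(\eta\) really satisfies \(\gamma'=\gamma\cdot\eta\), rather than merely witnessing \(\Qu(a)=\Qu(b)\) for some other pair \(a\in\U\), \(b\in\U'\). Secondly, the clopen partition \(Y=\bigsqcup Y_x\) must be read off from~\(\rg\) by setting \(Y_x\defeq\rg^{-1}(\Gr_x^0)\), and one should check that each \(\vartheta(\U)\) for \(\U\in\Bis(\Bisp_g)\) maps the correct piece to the correct piece; both are immediate from \ref{en:F-action_from_theta5} but should be said. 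Neither point is a gap in the approach.
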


\begin{lemma}[\cite{Meyer:Diagrams_models}*{Lemma~5.4}]
  \label{lem:theta_gives_equivariant}
  Let \(Y\) and~\(Y'\) be \(F\)\nb-actions.  A continuous map
  \(\varphi\colon Y\to Y'\) is \(F\)\nb-equivariant if and only if
  \(\rg'\circ \varphi = \rg\) and
  \(\vartheta'(\U)\circ\varphi = \varphi\circ\vartheta(\U)\) for all
  \(\U\in \Bis(F)\).
\end{lemma}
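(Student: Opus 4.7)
The plan is to prove both directions directly from the definitions, exploiting the fact (noted in the paragraph following Definition~\ref{def:correspondence_slices}) that slices form a basis for the topology of each correspondence~$\Bisp_g$, so that any element $\gamma\in\Bisp_g$ lies in some slice.

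For the forward direction, assume that $\varphi$ is $F$\nb-equivariant. Equivariance includes that $\varphi$ respects the clopen partitions and the anchor maps, so $\rg'\circ\varphi=\rg$. To verify $\vartheta'(\U)\circ\varphi=\varphi\circ\vartheta(\U)$ for a slice $\U\subseteq\Bisp_g$ with $g\colon x\leftarrow x'$, first check that the domains of both partial maps coincide: by \ref{en:F-action_from_theta6} the domain of $\vartheta(\U)$ is $\rg^{-1}(\s(\U))$, and since $\rg'\circ\varphi=\rg$, the domain of $\vartheta'(\U)\circ\varphi$ is $\varphi^{-1}(\rg'^{-1}(\s(\U)))=\rg^{-1}(\s(\U))$ as well. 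On this common domain, for $y$ in it, let $\gamma\in\U$ be the unique element with $\s(\gamma)=\rg(y)$; then $\vartheta(\U)(y)=\gamma\cdot y$, so $\varphi(\vartheta(\U)(y))=\varphi(\gamma\cdot y)=\gamma\cdot\varphi(y)$ by equivariance. On the other hand, $\rg'(\varphi(y))=\rg(y)=\s(\gamma)$, so the same $\gamma\in\U$ is the unique element of~$\U$ used to compute $\vartheta'(\U)(\varphi(y))=\gamma\cdot\varphi(y)$. The two sides agree.

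For the reverse direction, assume the two conditions hold. The hypothesis $\rg'\circ\varphi=\rg$ forces $\varphi$ to respect the partitions: if $y\in Y_x$ then $\rg'(\varphi(y))=\rg(y)\in\Gr_x^0$, so $\varphi(y)\in Y'_x$. For the action, take $\gamma\in\Bisp_g$ and $y\in Y_{\s(g)}$ with $\s(\gamma)=\rg(y)$, and choose any slice $\U\subseteq\Bisp_g$ containing~$\gamma$, which is possible since slices form a basis. Then $\rg(y)=\s(\gamma)\in\s(\U)$, so $y$ lies in the domain of~$\vartheta(\U)$, and injectivity of $\s|_\U$ yields $\vartheta(\U)(y)=\gamma\cdot y$ and likewise $\vartheta'(\U)(\varphi(y))=\gamma\cdot\varphi(y)$. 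The assumed intertwining relation then gives $\varphi(\gamma\cdot y)=\gamma\cdot\varphi(y)$, which is precisely $F$\nb-equivariance.

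The main obstacle is bookkeeping rather than conceptual: one must keep track of the domains of the partial maps and recognise that the hypothesis $\rg'\circ\varphi=\rg$ is exactly what makes these domains match. Once that is set up, the argument reduces to the observation that inside a single slice~$\U$ the action of~$\vartheta(\U)$ and $\vartheta'(\U)$ is implemented by the same element of~$\U$ on both sides, so the basis property of slices lets one pass between the elementwise and the partial-homeomorphism formulations without friction.
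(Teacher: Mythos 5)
Your proposal is correct: both directions are the standard unwinding of the definitions, with the key observations being that \(\rg'\circ\varphi=\rg\) makes the domains of the partial maps \(\vartheta'(\U)\circ\varphi\) and \(\varphi\circ\vartheta(\U)\) coincide, and that slices form a basis so every \(\gamma\in\Bisp_g\) lies in one, which converts the elementwise equivariance condition into the slice-by-slice intertwining condition and back. Note that this paper does not reprove the lemma --- it is quoted from \cite{Meyer:Diagrams_models}*{Lemma~5.4} --- but your direct verification is exactly the expected argument.
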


\section{General existence of a groupoid model}
\label{sec:general_existence}

Our next goal is to prove that any diagram of groupoid
correspondences has a groupoid model.  By the results
of~\cite{Meyer:Diagrams_models} mentioned above, it suffices to show
that its category of actions has a terminal object.  Our proof will
use the following criterion for this:

\begin{lemma}
  \label{lem:final_object_exists}
  Let~\(\Cat[D]\) be a cocomplete, locally small category.  Assume
  that there is a set of objects \(\Phi\subseteq \Cat[D]\) such that
  for any object \(x\in\Cat[D]^0\) there is a \(y\in\Phi\) and an
  arrow \(x \to y\).  Then~\(\Cat[D]\) has a terminal object.
\end{lemma}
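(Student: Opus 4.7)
The argument is the dual of Freyd's classical initial-object construction. First, I would form the coproduct
\[
P \defeq \coprod_{y\in\Phi} y,
\]
which exists because \(\Phi\) is a set and \(\Cat[D]\) is cocomplete. The hypothesis makes \(P\) \emph{weakly} terminal: for any \(x\in\Cat[D]^0\), pick \(y\in\Phi\) with an arrow \(x\to y\) and compose with the coprojection \(y\to P\).

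Second, I would promote weak terminality to strict terminality by quotienting out all endomorphisms of~\(P\). Local smallness ensures that \(\Cat[D](P,P)\) is a set, so cocompleteness yields a joint coequaliser \(q\colon P\to T\) of the family \(\{(\id_P,\alpha)\}_{\alpha\in\Cat[D](P,P)}\). By construction \(q\circ\alpha=q\) for every endomorphism \(\alpha\) of~\(P\); composing an arrow \(x\to P\) with~\(q\) shows that \(T\) is still weakly terminal.

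The main step is uniqueness of arrows into~\(T\). Given \(f,g\colon x\to T\), form their coequaliser \(c\colon T\to C\); weak terminality of~\(P\) applied to~\(C\) produces some \(w\colon C\to P\), and then \(w\circ c\circ q\) is an endomorphism of~\(P\). The defining property of~\(q\) gives
\[
q\circ w\circ c\circ q \;=\; q \;=\; \id_T\circ q,
\]
so, since \(q\) is epi (coequalisers are epi), cancellation yields \(q\circ w\circ c=\id_T\). Thus \(c\) is a split monomorphism. Being also an epimorphism (again, a coequaliser), \(c\) must be an isomorphism: from \(c\circ(qw)\circ c=c=\id_C\circ c\) and the epi property of~\(c\) we obtain \(c\circ qw=\id_C\). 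Hence \(c\circ f=c\circ g\) already forces \(f=g\), so \(T\) is terminal.

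The only slightly delicate point is this last ``split mono \(+\) epi \(\Rightarrow\) iso'' observation; everything else is formal once one commits to dualising Freyd's construction, and the cocompleteness and local smallness hypotheses are used exactly where they are needed, namely to form the set-indexed coproduct~\(P\) and the set-indexed coequaliser~\(q\).
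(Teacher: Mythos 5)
Your proof is correct and is exactly the argument the paper invokes: the paper's proof is a one-line citation of the dual statement (Riehl, Lemma 4.6.5, i.e.\ Freyd's initial-object construction), and your write-up is precisely that construction dualised — coproduct over \(\Phi\) giving a weakly terminal object, joint coequaliser of its endomorphisms, and the split-mono-plus-epi argument for uniqueness. All steps check out, including the use of local smallness to make \(\Cat[D](P,P)\) a set and of the epi property of the joint coequaliser.
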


\begin{proof}
  This is dual to \cite{Riehl:Categories_context}*{Lemma 4.6.5},
  which characterises the existence of an initial object in a
  complete, locally small category.
\end{proof}

\begin{theorem}
  \label{the:groupoid_model_universal_action_exists}
  Any diagram of groupoid correspondences
  \(F\colon \Cat \to \Grcat\) has a universal \(F\)\nb-action and a
  groupoid model.
\end{theorem}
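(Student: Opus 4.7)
The plan is to apply \longref{Lemma}{lem:final_object_exists} to the category $\Cat[D]$ of $F$-actions and $F$-equivariant continuous maps. If this category is locally small and cocomplete and admits a weakly terminal set, then it has a terminal object, and by \cite{Meyer:Diagrams_models}*{Proposition~5.12} this terminal object is a universal $F$-action and gives rise to a groupoid model. Local smallness is automatic: an $F$-equivariant map is in particular a continuous map between the underlying spaces, and continuous maps between two fixed spaces form a set.

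First I would verify cocompleteness. Given a small diagram $(Y_i)_{i\in I}$ in $\Cat[D]$, I would construct the colimit by forming the colimit $Y$ of the underlying diagram of topological spaces (as a quotient of $\bigsqcup_i Y_i$) and then transferring the $F$-action structure via \longref{Lemma}{lem:F-action_from_theta}. The clopen partitions and the anchor maps $\rg_i\colon Y_i\to \bigsqcup_x \Gr_x^0$ descend to $Y$ by the universal property. For each slice $\U\in\Bis(F)$, the partial homeomorphisms $\vartheta_i(\U)$ are compatible with the diagram maps by \longref{Lemma}{lem:theta_gives_equivariant}, so they assemble to a partial self-map $\vartheta(\U)$ of $Y$ whose domain is the image in $Y$ of $\bigsqcup_i \mathrm{dom}(\vartheta_i(\U))$. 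One verifies that this domain is open and that $\vartheta(\U)$ is a partial homeomorphism of $Y$ for the quotient topology; the four relations in \longref{Lemma}{lem:F-action_from_theta} then pass to $Y$ because they hold on each $Y_i$ and the canonical maps $Y_i\to Y$ are jointly surjective. A second application of \longref{Lemma}{lem:theta_gives_equivariant}, combined with the universal property in the category of spaces, identifies the resulting $F$-action on $Y$ with the colimit in $\Cat[D]$.

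Next I would produce a weakly terminal set. Fix a cardinal $\kappa$ with $\kappa\ge\bigl(|\bigsqcup_{x\in\Cat^0}\Gr_x^0|+1\bigr)^{|\Bis(F)|}$ and let $\Phi$ be a set of representatives of isomorphism classes of $F$-actions whose underlying set has cardinality at most $\kappa$. For any $F$-action $Y$, I would define $y\sim y'$ to hold if and only if, for every $\U\in\Bis(F)$, either both $\vartheta(\U)(y)$ and $\vartheta(\U)(y')$ are undefined, or both are defined with $\rg(\vartheta(\U)(y))=\rg(\vartheta(\U)(y'))$. Each equivalence class is then determined by the partial map $\U\mapsto\rg(\vartheta(\U)(y))$, so there are at most $\kappa$ classes. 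Since $\vartheta$ is a homomorphism of semigroups with zero and $\rg\circ\vartheta(\U)=\U_\dagger\circ\rg$, the relation $\sim$ is preserved by every $\vartheta(\U)$, so the $F$-action descends to $Y/{\sim}$ with the quotient topology, and the projection $Y\to Y/{\sim}$ is an $F$-equivariant map to an object of $\Phi$.

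The main obstacle in both steps will be the same openness check: verifying that each glued or induced $\vartheta(\U)$ remains a partial homeomorphism after passing to a quotient topology. Continuity of $\vartheta(\U)$ is automatic from the universal property of quotients, but openness of the image of a saturated open subset of the domain of $\vartheta(\U)$ requires the equivalence relation in play to interact well with the slice action. For cocompleteness this should reduce to the corresponding statement for the colimit relation, which holds because the diagram maps are themselves $F$-equivariant and hence intertwine the $\vartheta$'s; for the weakly terminal set it should follow from the $\vartheta$-invariance of $\sim$ that is built into its definition. I expect the bulk of the technical work to live in these two verifications.
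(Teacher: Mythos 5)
Your overall strategy --- \longref{Lemma}{lem:final_object_exists} applied to the locally small category of \(F\)\nb-actions, with cocompleteness obtained from quotients of disjoint unions and the structure transferred via \longref{Lemma}{lem:F-action_from_theta} --- is the same as the paper's, and your colimit argument (saturated open domains have open images in the quotient topology, the identities of \longref{Lemma}{lem:F-action_from_theta} pass to the quotient because the canonical maps are jointly surjective) matches the paper's reduction to coproducts and coequalisers. The genuine gap is in your weakly terminal set. Your equivalence relation records only the forward data \(\U\mapsto \rg(\vartheta(\U)(y))\) for \(\U\in\Bis(F)\), and this is too coarse: the quotient \(Y/{\sim}\) need not carry any \(F\)\nb-action. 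Concretely, take the diagram of \longref{Example}{exa:words} with~\(\Gr\) the trivial one-point groupoid and \(\Bisp=\{a,b\}\). Every nonempty slice in \(\Bis(F)\) is a singleton word~\(w\), each \(\vartheta(w)\) is everywhere defined, and \(\rg\) is the constant map to the point, so your relation identifies \emph{all} points of any \(F\)\nb-action~\(Y\) and \(Y/{\sim}\) is a single point. But the one-point space admits no \(F\)\nb-action: \(a\cdot y=b\cdot y\) for the unique point~\(y\) violates \ref{en:diagram_dynamical_system2}; equivalently, \(\vartheta(\{a\})^*\vartheta(\{b\})\) would be the identity instead of \(\vartheta(\braket{\{a\}}{\{b\}})=\vartheta(\emptyset)\), so \ref{en:F-action_from_theta2} fails on the quotient. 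The root cause is that your relation is not preserved by the partial inverses \(\vartheta(\U)^*\): preservation under \(\vartheta(\U)\) follows from \(\vartheta(\V)\vartheta(\U)=\vartheta(\V\U)\) as you say, but nothing forces the descended maps to be injective.

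The paper repairs exactly this point by working with the inverse semigroup \(\IS(F)\) generated by the slice actions, which is closed under \(t\mapsto t^*\), and by separating points according to which domains of elements of \(\IS(F)\) (more precisely, which sets of the topology these domains generate) contain them, together with \(\rg\). In the example above these domains include the cylinder sets \(\Dom(\vartheta(w)^*)=wY\), which do separate points of \(\prod_{n\in\N}\Bisp\). With that relation, invariance under every \(t\in\IS(F)\) \emph{and} its inverse is automatic, the action descends, and the cardinality is controlled by an injection into \(\Gr^0\times\mathcal{P}(\Upsilon)\) for~\(\Upsilon\) the set of finite subsets of \(\IS(F)\). So your argument can be salvaged by replacing \(\Bis(F)\) with \(\IS(F)\) and replacing the invariant \(\U\mapsto\rg(\vartheta(\U)(y))\) by the collection of domains containing~\(y\); as written, the weakly terminal set step does not go through.
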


\begin{proof}
  By the discussion above, it suffices to prove that the category of
  \(F\)\nb-actions satisfies the assumptions in
  \longref{Lemma}{lem:final_object_exists}.  We first exhibit the
  set of objects~\(\Phi\).

  Let~\(Y\) be any space with an \(F\)\nb-action.  Equip~\(Y\) with
  the canonical action of the inverse semigroup~\(\IS(F)\).  Call an
  open subset of~\(Y\) \emph{necessary} if it is the domain of some
  element of~\(\IS(F)\).  Let~\(\tau'\) be the topology on~\(Y\)
  that is generated by the necessary open subsets, and let~\(Y'\)
  be~\(Y\) with the topology~\(\tau'\).  Let~\(Y''\) be the quotient
  of~\(Y'\) by the equivalence relation where two points \(y_1,y_2\)
  are identified if
  \[
    \setgiven{U\in\tau'}{y_1 \in U}
    = \setgiven{U\in\tau'}{y_2 \in U}
  \]
  and \(\rg(y_1) = \rg(y_2)\) for the canonical continuous map
  \(\rg\colon Y\to \bigsqcup_{x\in\Cat^0} \Gr_x^0\).  The continuous
  map \(\rg\colon Y\to \Gr^0 \defeq \bigsqcup_{x\in\Cat^0} \Gr_x^0\)
  descends to a map on~\(Y''\), which is continuous because the
  subsets \(\rg^{-1}(U)\) for open subsets \(U\subseteq \Gr_x^0\),
  \(x\in\Cat^0\) are ``necessary'' by \ref{en:F-action_from_theta4}.
  The \(\IS(F)\)\nb-action on~\(Y\) descends to an
  \(\IS(F)\)\nb-action on~\(Y''\) because all the domains of
  elements of \(\IS(F)\) are in~\(\tau'\).  Then
  \longref{Lemma}{lem:F-action_from_theta} implies that the
  \(F\)\nb-action on~\(Y\) descends to an \(F\)\nb-action
  on~\(Y''\).  The quotient map \(Y\prto Y''\) is a continuous
  \(F\)\nb-equivariant map.

  Next, we control the cardinality of the set~\(Y''\).  By
  construction, finite intersections of necessary open subsets form
  a basis of the topology~\(\tau'\).  A point in~\(Y''\) is
  determined by its image in~\(\Gr^0\) and the set of basic open
  subsets that contain it.  This defines an injective map
  from~\(Y''\) to the product of~\(\Gr^0\) and the power
  set~\(\mathcal{P}(\Upsilon)\) for the set~\(\Upsilon\) of finite
  subsets of \(\IS(F)\).  We may use this injective map to transfer
  the \(F\)\nb-action on~\(Y''\) to an isomorphic \(F\)\nb-action on
  a subset of \(\Gr^0\times \mathcal{P}(\Upsilon)\), equipped with
  some topology.  Let~\(\Phi\) be the set of all \(F\)\nb-actions on
  subsets of \(\Gr^0\times \mathcal{P}(\Upsilon)\), equipped with
  some topology.  This is indeed a set, not a class.  The argument
  above shows that any \(F\)\nb-action admits a continuous
  \(F\)\nb-equivariant map to an \(F\)\nb-action in~\(\Phi\), as
  required.

  The category of \(F\)\nb-actions is clearly locally small.  It
  remains to prove that it is cocomplete.  It suffices to prove that
  it has all small coproducts and coequalisers (see
  \cite{Riehl:Categories_context}*{Theorem~3.4.12}).  Coproducts are
  easy: if \((Y_i)_{i\in I}\) is a set of \(F\)\nb-actions, then the
  disjoint union \(\bigsqcup_{i\in I} Y_i\) with the canonical
  topology carries a unique \(F\)\nb-action for which the inclusions
  \(Y_i \to \bigsqcup_{i\in I} Y_i\) are all \(F\)\nb-equivariant,
  and this is a coproduct in the category of \(F\)\nb-actions.  Now
  let \(Y_1\) and~\(Y_2\) be two spaces with \(F\)\nb-actions and
  let \(f,g\colon Y_1 \rightrightarrows Y_2\) be two
  \(F\)\nb-equivariant continuous maps.  Equip~\(Y_2\) with the
  equivalence relation~\(\sim\) that is generated by
  \(f(y)\sim g(y)\) for all \(y\in Y_1\) and let~\(Y\)
  be~\(Y_2/{\sim}\) with the quotient topology.  This is the
  coequaliser of \(f,g\) in the category of topological spaces.  We
  claim that there is a unique \(F\)\nb-action on~\(Y\) so that the
  quotient map is \(F\)\nb-equivariant.  And this \(F\)\nb-action
  turns~\(Y\) into a coequaliser of \(f,g\) in the category of
  \(F\)\nb-actions.  We use \longref{Lemma}{lem:F-action_from_theta}
  to build the \(F\)\nb-action on~\(Y\).  Since \(f,g\) are
  \(F\)\nb-equivariant, the continuous maps
  \(\rg\colon Y_2 \to\Gr^0\) equalises \(f,g\).  Then~\(\rg\)
  descends to a continuous map \(\rg\colon Y\to \Gr^0\).  Let
  \(t\in \IS(F)\).  The domain of~\(t\) is closed under~\(\sim\)
  because \(f,g\) are \(\IS(F)\)\nb-equivariant, and \(y_1\sim y_2\)
  implies \(\vartheta(t)(y_1) \sim \vartheta(t)(y_2)\).  Therefore,
  the image of the domain of~\(\vartheta(t)\) in~\(Y\) is open in
  the quotient topology and \(\vartheta(t)\) descends to a partial
  homeomorphism of~\(Y\).  This defines an action of \(\IS(F)\)
  on~\(Y\).  All conditions in
  \longref{Lemma}{lem:F-action_from_theta} pass from~\(Y_2\)
  to~\(Y\).  We have found an \(F\)\nb-action on~\(Y\).  Any
  continuous map \(h\colon Y_2 \to Z\) with \(h\circ f = h\circ g\)
  descends uniquely to a continuous map \(h^\flat\colon Y\to Z\).
  If~\(h\) is \(F\)\nb-equivariant, then so is~\(h^\flat\) by
  \longref{Lemma}{lem:theta_gives_equivariant}.
  Thus~\(Y\) is a coequaliser of \(f,g\).  This finishes the proof
  that the category of \(F\)\nb-actions is cocomplete.  And then the
  existence of a final object follows.
\end{proof}

\longref{Theorem}{the:groupoid_model_universal_action_exists} has
the merit that it works for any diagram of groupoid correspondences.
For applications to \(\Cst\)\nb-algebras, however, the groupoid
model should be a locally compact groupoid.  Equivalently, the
underlying space~\(\Omega\) of the universal action should be
locally compact and Hausdorff.  \longref{Example}{exa:words} shows
that~\(\Omega\) may fail to be locally compact in rather simple
examples.  In the following sections, we are going to prove
that~\(\Omega\) is locally compact and Hausdorff whenever~\(F\) is a
diagram of proper, locally compact groupoid correspondences.  Like
the proof of
\longref{Theorem}{the:groupoid_model_universal_action_exists}, our
proof of this statement will not be constructive.  The key tool is a
relative form of the Stone--Čech compactification, which we will use
to show that any \(F\)\nb-action maps to an \(F\)\nb-action on a
locally compact Hausdorff space.

\section{The relative Stone--Čech compactification}
\label{sec:relative_SC}

We begin by recalling some well known definitions.

\begin{proposition}[\cite{Bourbaki:Topologie_generale}*{I.10.1,
    I.10.3 Proposition~7}]
  \label{pro:proper_map}
  Let \(X\) and~\(Y\) be topological spaces.  A map
  \(f\colon X\to Y\) is \emph{proper} if and only if
  \(f \times \id_Z\colon X\times Z\to Y\times Z\) is closed for
  every topological space~\(Z\).

  If~\(X\) is Hausdorff and~\(Y\) is Hausdorff, locally compact,
  then \(f\colon X\to Y\) is proper if and only if preimages of
  compact subsets are compact.
\end{proposition}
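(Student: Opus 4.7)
The plan is to adopt as working definition that $f\colon X \to Y$ is proper if it is continuous, closed, and has compact fibres, and deduce both parts of the proposition from there.

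For the first equivalence, one direction is a tube lemma argument. Assume $f$ is closed with compact fibres and let $C \subseteq X \times Z$ be closed with $(y_0, z_0) \notin (f \times \id_Z)(C)$. Each point of the compact set $f^{-1}(y_0) \times \{z_0\}$ lies in an open product neighbourhood disjoint from $C$; extracting a finite subcover yields open sets $U \supseteq f^{-1}(y_0)$ in~$X$ and $V \ni z_0$ in~$Z$ with $(U \times V) \cap C = \emptyset$. Since $f$ is closed, $W \defeq Y \setminus f(X \setminus U)$ is open and contains~$y_0$, so $W \times V$ is a neighbourhood of $(y_0, z_0)$ disjoint from $(f \times \id_Z)(C)$. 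Conversely, universal closedness with $Z$ a point returns closedness of~$f$, while compactness of an arbitrary fibre $f^{-1}(y)$ is obtained by a standard ultrafilter argument: a free ultrafilter on the fibre with no cluster point is encoded into a closed subset of $X \times Z$ for a suitable auxiliary space~$Z$ whose image under $f \times \id_Z$ fails to be closed, contradicting the hypothesis.

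For the second part, suppose $X$ and~$Y$ are Hausdorff with $Y$ locally compact. If $f$ is proper and $K \subseteq Y$ is compact, then $f^{-1}(K)$ is closed in~$X$; the restriction $f^{-1}(K) \to K$ is again universally closed, and together with compactness of~$K$ this forces compactness of $f^{-1}(K)$ via the filter characterisation of compactness. Conversely, assume preimages of compact sets are compact. The fibres $f^{-1}(y)$ are compact since $\{y\}$ is closed in the Hausdorff space~$Y$. For closedness of~$f$, let $A \subseteq X$ be closed and $y \in \overline{f(A)}$: choose a compact neighbourhood $K$ of~$y$; then $A \cap f^{-1}(K)$ is closed in the compact set $f^{-1}(K)$, hence compact, so $f(A \cap f^{-1}(K))$ is a compact --~and therefore closed --~subset of~$Y$ containing~$y$ as a limit point, forcing $y \in f(A)$.

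The main obstacle is the delicate half of the first equivalence, namely deducing compactness of fibres from universal closedness; this is where Bourbaki invokes a carefully chosen auxiliary space~$Z$ to detect a putative non-convergent ultrafilter on the fibre. Since the statement is quoted verbatim from Bourbaki in the excerpt, in practice the paper merely invokes the reference rather than reproducing this construction.
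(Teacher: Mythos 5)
The paper offers no proof of this proposition at all; it is quoted directly from Bourbaki (the citation to I.10.1 and I.10.3, Proposition~7 is the entire justification), so there is nothing to compare your argument against except the source. Your reconstruction is essentially Bourbaki's: the tube-lemma argument for ``closed with compact fibres \(\Rightarrow\) universally closed'' is correct as written, the Kuratowski--Mr\'owka-style auxiliary space for the converse is the right (and genuinely nontrivial) tool, and both directions of the second part are sound --- in particular the local-compactness of \(Y\) is used exactly where it must be, to produce the compact neighbourhood \(K\) of a point of \(\overline{f(A)}\). Two small remarks. First, ``proper'' in Bourbaki's sense includes continuity, and universal closedness alone does not imply continuity (an identity map from an indiscrete two-point space to the discrete one is universally closed but not continuous); so the first equivalence, and your converse direction, should be read with continuity as a standing hypothesis on \(f\) --- this is a defect of the statement as quoted rather than of your argument. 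Second, in the converse of the second part, the fibres \(f^{-1}(y)\) are compact because \(\{y\}\) is \emph{compact}, not because it is closed; the closedness of points plays no role there. Neither point is a genuine gap.
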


\begin{definition}[\cite{May-Sigurdsson:Parametrized_homotopy_theory}]
  Let~\(B\) be a topological space.  A \emph{\(B\)\nb-space} is a
  topological space~\(Z\) with a continuous map \(r \colon Z \to B\),
  called anchor map.  It is called \emph{proper} if~\(r\) is a proper
  map.
  Let \((Z_1,r_1)\) and \((Z_2,r_2)\) be two \(B\)\nb-spaces.  A
  \emph{\(B\)\nb-map} is a continuous map \(f \colon Z_1 \to Z_2\) such
  that the following diagram commutes:
  \[
    \begin{tikzcd}[row sep=small]
      Z_1 \ar[rr, "f"] \ar[dr, "r_1"']&&
      Z_2 \ar[dl, "r_2"] \\
      & B
    \end{tikzcd}
  \]
  Let~\(\gps{B}\) be the category of \(B\)\nb-spaces, which has
  \(B\)\nb-spaces as its objects and \(B\)\nb-maps as its morphisms,
  with the usual composition of maps.  Let
  \(\pgps{B} \subseteq \gps{B}\) be the full subcategory of those
  \(B\)\nb-spaces \((Z,r)\) where the space~\(Z\) is Hausdorff and
  the map~\(r\) is proper.
\end{definition}

\begin{remark}
  \label{rem:proper_is_locally_compact}
  If~\(B\) is Hausdorff, locally compact and \((Z,r)\) is a proper
  \(B\)-space, then~\(Z\) is locally compact by
  \longref{Proposition}{pro:proper_map}.  This is how we are going
  to prove that the underlying space of a universal action is
  locally compact.
\end{remark}

For a topological space~\(X\), its Stone--\v{C}ech compactification
is a compact Hausdorff space~\(\beta X\) with a continuous map
\(\iota_X\colon X\to \beta X\), such that any continuous map
from~\(X\) to a compact Hausdorff space factors uniquely
through~\(\iota_X\).  In other words, the Stone--\v{C}ech
compactification~\(\beta\) is left adjoint to the inclusion of the
full subcategory of compact Hausdorff spaces into the category of
all topological spaces.  If~\(B\) is the one-point space, then a
\(B\)\nb-space is just a space, and \(B\)\nb-maps are just
continuous maps.  A proper, Hausdorff \(B\)\nb-space is just a
compact Hausdorff space.  Thus the Stone--\v{C}ech compactification
is a left adjoint for the inclusion \(\pgps{B} \subseteq \gps{B}\)
in the case where~\(B\) is a point.  The \emph{relative}
Stone--\v{C}ech compactification generalises this to all Hausdorff,
locally compact spaces~\(B\).

For a topological space~\(X\), let \(\Contb(X)\) be the
\(\Cst\)\nb-algebra of all bounded, continuous functions \(X\to\C\).
A continuous map \(f\colon X\to Y\) induces a \Star{}homomorphism
\(f^*\colon \Contb(Y) \to \Contb(X)\), \(h\mapsto h\circ f\).
If~\(X\) is Hausdorff, locally compact, then we let
\(\Cont_0(X) \subseteq \Contb(X)\) be the ideal of all continuous
functions \(X\to\C\) that vanish at~\(\infty\).  If \(X\) and~\(Y\)
are Hausdorff, locally compact spaces and \(f\colon X\to Y\) is a continuous
map, then the restriction of \(f^*\colon \Contb(X)\to \Contb(Y)\) to
\(\Cont_0(X)\) is \emph{nondegenerate}, that is,
\[
  f^*(\Cont_0(X)) \cdot \Cont_0(Y) = \Cont_0(Y).
\]
Conversely, any nondegenerate \Star{}homomorphism is of this form
for a unique continuous map~\(f\).  The range of~\(f^*\) is
contained in \(\Cont_0(Y)\) if and only if~\(f\) is proper.

\begin{definition}
  Let~\(B\) be a locally compact Hausdorff space and let \((X,r)\) be a
  \(B\)\nb-space.  The \emph{relative Stone--\v{C}ech
    compactification} \(\rscc{B} X\) of~\(X\) over~\(B\) is defined as
  the spectrum of the \(C^*\)\nb-subalgebra
  \[
    H_X \defeq \Contb(X) \cdot r^*(\Cont_0(B)) \subseteq \Contb(X).
  \]
\end{definition}

We show that the relative Stone--\v{C}ech compactification is indeed
the reflector (left adjoint) of the inclusion \(\pgps{B} \hookrightarrow \gps{B}\).

In the following, we let~\(B\) be a locally compact Hausdorff space,
\((X,r)\) an object in \(\gps{B}\) and \((X',r')\) an object in
\(\pgps{B}\).  Then~\(X'\) is Hausdorff by the definition of
\(\pgps{B}\) and locally compact by
\longref{Remark}{rem:proper_is_locally_compact}.

The inclusion \(i^* \colon H_X \hookrightarrow \Contb(X)\) is a
\Star{}homomorphism.  For each \(x \in X\), denote by
\(\mathrm{ev}_x\) the evaluation map at~\(x\).  Then
\(\mathrm{ev}_x\circ i^* \colon H_X \to \C\) is a character
on~\(H_X\).  It is nonzero on~\(H_X\) because
\(\mathrm{ev}_x\circ i^*(1\cdot r^*(h))\neq0\) if \(h\in\Cont_0(B)\)
satisfies \(h(r(x))\neq 0\).  Thus \(\mathrm{ev}_x\circ i^*\) is a
point in the spectrum~\(\rscc{B} X\) of~\(H_X\).  This defines a map
\(i \colon X \to \rscc{B} X\).  The map~\(i\) is continuous because
\(h\circ i\) is continuous for all
\(h\in H_X = \Cont_0(\rscc{B} X)\).

\begin{lemma}
  \label{uniquedual}
  Let \(f, g \colon X \rightrightarrows X'\).  If \(f \neq g\), then
  \(f^* \neq g^*\colon \Cont_0(X') \to \Contb(X)\).
\end{lemma}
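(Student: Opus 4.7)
The plan is straightforward point-set topology: since $f\ne g$ as maps $X\to X'$, pick a point where they disagree and separate the two images by a $\Cont_0$-function on~$X'$, which is available because~$X'$ is Hausdorff and locally compact.

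More concretely, I would first choose $x\in X$ with $y_1 \defeq f(x) \ne g(x) \eqdef y_2$ (I'd avoid the \texttt{eqdef} macro and just write ``set $y_1=f(x)$ and $y_2=g(x)$''). Recall that~$X'$ is Hausdorff and, by \longref{Remark}{rem:proper_is_locally_compact}, locally compact, since $(X',r')$ lies in \(\pgps{B}\). Hence~$X'$ is completely regular, and in fact one can find a function $h\in\Cont_c(X')\subseteq \Cont_0(X')$ with $h(y_1)=1$ and $h(y_2)=0$: choose a compact neighbourhood~$K$ of~$y_1$ whose interior misses~$y_2$ (using Hausdorffness to separate $y_1$ from $y_2$ first, then local compactness to shrink to a compact neighbourhood), and apply Urysohn's lemma inside the one-point compactification of~$X'$ to produce a continuous function that is~$1$ at~$y_1$, vanishes outside the interior of~$K$, and in particular vanishes at~$y_2$ and at infinity.

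Given such an~$h$, the pulled-back functions satisfy
\[
  (f^*h)(x) = h(f(x)) = h(y_1) = 1 \ne 0 = h(y_2) = h(g(x)) = (g^*h)(x),
\]
so $f^*h$ and $g^*h$ differ as elements of $\Contb(X)$, proving $f^*\ne g^*$.

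There is no real obstacle here; the only thing that genuinely uses something nontrivial is producing the separating function, and this is the standard fact that $\Cont_0$ of a locally compact Hausdorff space separates points. The hypothesis that~$X$ carries an anchor map to~$B$ plays no role in this particular lemma, and neither does properness of~$r'$ beyond its consequence that~$X'$ is locally compact.
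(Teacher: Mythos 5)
Your proposal is correct and follows essentially the same route as the paper: pick a point where \(f\) and \(g\) disagree, use Hausdorffness and local compactness of \(X'\) together with Urysohn's lemma to produce a function in \(\Cont_0(X')\) separating the two image points, and pull it back. The only difference is cosmetic --- the paper builds the separating function on the closure of a relatively compact open neighbourhood and extends by zero, while you work in the one-point compactification; both yield the same standard fact that \(\Cont_0\) of a locally compact Hausdorff space separates points.
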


\begin{proof}
  By assumption, there is \(x \in X\) with \(f(x) \neq g(x)\) in~\(X'\).
  Since~\(X'\) is Hausdorff and locally compact, we may separate
  \(f(x)\) and~\(g(x)\) by relatively compact, open neighbourhoods
  \(U_f\) and~\(U_g\).  Urysohn's Lemma gives a continuous function
  \(h\colon \overline{U_f} \to [0,1]\) with \(h(f(x))=1\) and
  \(h|_{\partial U_f} = 0\).  Extend~\(h\) by~\(0\) to a
  function~\(\tilde{h}\) on~\(X'\).  This belongs to \(\Cont_0(X')\)
  because \(h|_{\partial U_f} = 0\) and~\(\overline{U_f}\) is
  compact, and \(\tilde{h}(g(x))=0\).  Thus
  \(f^*(\tilde{h}) \neq g^*(\tilde{h})\).
\end{proof}

\begin{lemma}
  \label{dense}
  Let~\(S\) be a subset of a locally compact Hausdorff space~\(X'\).  If
  the restriction map from \(\Cont_0(X')\) to \(\Contb(S)\) is
  injective, then~\(S\) is dense in~\(X'\).
\end{lemma}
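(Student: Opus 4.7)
The plan is to prove the contrapositive: if $S$ is not dense in $X'$, then we will construct a nonzero function in $\Cont_0(X')$ whose restriction to $S$ vanishes, contradicting injectivity of the restriction map.

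First I would pick a point $x \in X' \setminus \overline{S}$, which exists because $\overline{S} \neq X'$ by assumption. Since $X'$ is locally compact and Hausdorff, and $X' \setminus \overline{S}$ is an open neighbourhood of $x$, I can choose a relatively compact open neighbourhood $U$ of $x$ with $\overline{U} \subseteq X' \setminus \overline{S}$; in particular $\overline{U} \cap S = \emptyset$. Then Urysohn's Lemma, applied to the compact Hausdorff space $\overline{U}$, supplies a continuous function $h \colon \overline{U} \to [0,1]$ with $h(x) = 1$ and $h|_{\partial U} = 0$, much as in the proof of \longref{Lemma}{uniquedual}. Extending by zero outside $\overline{U}$ yields a function $\tilde{h} \in \Cont_0(X')$ (using that $\overline{U}$ is compact and $h$ vanishes on the topological boundary $\partial U$), which is nonzero since $\tilde{h}(x) = 1$ yet vanishes identically on $S$.

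Therefore $\tilde{h}$ lies in the kernel of the restriction map $\Cont_0(X') \to \Contb(S)$ without being zero, contradicting injectivity. Hence $S$ must be dense.

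I do not expect any serious obstacle here: the argument is a standard Urysohn separation applied to the complement of $\overline{S}$, and the only mild care needed is to ensure that the bump function can be taken in $\Cont_0(X')$, which is handled by choosing the neighbourhood $U$ to be relatively compact and extending by zero across $\partial U$.
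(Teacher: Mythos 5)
Your proof is correct and follows essentially the same route as the paper: the paper also argues by contraposition, producing a nonzero function in \(\Cont_0(X'\setminus\overline{S})\) via the Urysohn construction from the proof of \longref{Lemma}{uniquedual} and extending it by zero. You merely spell out the bump-function construction that the paper cites by reference.
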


\begin{proof}
  We prove the contrapositive statement.  Suppose that~\(S\) is not
  dense in~\(X'\).  Then \(\overline{S} \neq X'\).  As in the proof of
  \longref{Lemma}{uniquedual}, there is a nonzero continuous
  function \(h\in \Cont_0(X' \backslash \overline{S})\).
  Extending~\(h\) by zero gives a nonzero function in
  \(\Cont_0(X')\) that vanishes on~\(S\).
\end{proof}

\begin{lemma}
  \label{lem:X_dense_in_beta}
  The image of~\(X\) in \(\rscc{B} X\) is dense.
\end{lemma}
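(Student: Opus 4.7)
The plan is to apply \longref{Lemma}{dense} with \(X' = \rscc{B} X\) and \(S = i(X)\). Since \(\rscc{B} X\) is the Gelfand spectrum of the commutative \(C^*\)\nb-algebra \(H_X\), it is locally compact and Hausdorff, so the hypotheses of \longref{Lemma}{dense} are satisfied. It therefore suffices to show that the restriction map \(\Cont_0(\rscc{B} X) \to \Contb(i(X))\) is injective.

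The key observation is that, under the Gelfand transform identifying \(H_X\) with \(\Cont_0(\rscc{B} X)\), an element \(f \in H_X\) corresponds to the function \(\hat{f} \colon \rscc{B} X \to \C\), \(\chi \mapsto \chi(f)\). Evaluating at \(i(x) = \mathrm{ev}_x\circ i^*\) for \(x \in X\) gives
\[
  \hat{f}(i(x)) = (\mathrm{ev}_x \circ i^*)(f) = f(x).
\]
Thus the restriction of \(\hat{f}\) to \(i(X)\) records exactly the values of \(f\) as an element of \(\Contb(X)\).

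If \(\hat{f}|_{i(X)} = 0\), then \(f(x) = 0\) for every \(x\in X\), so \(f\) is the zero element of \(H_X \subseteq \Contb(X)\), which means \(\hat{f}=0\) in \(\Cont_0(\rscc{B} X)\). Hence the restriction map is injective, and \longref{Lemma}{dense} gives that \(i(X)\) is dense in \(\rscc{B} X\).

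The argument is essentially a formal consequence of Gelfand duality together with the fact that characters of the form \(\mathrm{ev}_x\circ i^*\) separate elements of \(H_X\); I do not anticipate any real obstacle beyond correctly unwinding the identification \(H_X \cong \Cont_0(\rscc{B} X)\) and confirming that \(\rscc{B} X\) is locally compact Hausdorff so that \longref{Lemma}{dense} applies.
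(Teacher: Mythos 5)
Your proposal is correct and follows exactly the paper's argument: the paper also deduces the lemma from \longref{Lemma}{dense} via the injectivity of \(i^*\colon \Cont_0(\rscc{B} X)\cong H_X\to\Contb(X)\), which is precisely the fact you verify by unwinding \(\hat f(i(x))=f(x)\). Your write-up just makes the Gelfand-duality bookkeeping explicit.
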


\begin{proof}
  \longref{Lemma}{dense} shows this because
  \(i^*\colon \Cont_0(\rscc{B} X) \cong H_X \to \Contb(X)\) is
  injective.
\end{proof}

\begin{proposition}
  \label{betaXtoY}
  Let \(f \colon X \to X'\) be a morphism in \(\gps{B}\).  Assume~\(X'\)
  to be a Hausdorff proper \(B\)\nb-space.  Then there is a
  unique continuous map \(f' \colon \rscc{B} X \to X'\) such
  that the following diagram commutes:
  \[
    \begin{tikzcd}[sep=small]
      X \ar[rr, "f"] \ar[dr,  "i"']&& X'
      \\
      & \rscc{B}  X  \ar[ur,dashed,  "\exists !  f'"']
    \end{tikzcd}
  \]
  The map~\(f'\) is automatically proper.
\end{proposition}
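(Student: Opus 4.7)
The plan is to translate the statement into commutative $C^*$\nb-algebra via Gelfand duality: a continuous proper map $f'\colon \rscc{B} X\to X'$ into the locally compact Hausdorff space $X'$ corresponds to a nondegenerate \Star{}homomorphism $\Cont_0(X')\to \Cont_0(\rscc{B} X) = H_X$. The only candidate in sight is the restriction of the usual pullback $f^*\colon \Contb(X')\to \Contb(X)$ to $\Cont_0(X')$, so the whole argument reduces to two checks: that this restriction actually lands in $H_X$, and that it is nondegenerate as a map into $H_X$.

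Both checks rest on the factorisation $r = r'\circ f$ coming from $f$ being a $B$\nb-map, together with the properness of $r'$ built into the assumption $(X',r')\in\pgps{B}$. Properness of $r'$ ensures that $(r')^*$ sends $\Cont_0(B)$ into $\Cont_0(X')$, so
\[
r^*(\Cont_0(B)) = f^*\bigl((r')^*(\Cont_0(B))\bigr) \subseteq f^*(\Cont_0(X')).
\]
Combined with nondegeneracy of $(r')^*\colon \Cont_0(B)\to \Cont_0(X')$ (automatic because $r'$ is a proper map between locally compact Hausdorff spaces), this yields
\[
f^*(\Cont_0(X')) = f^*(\Cont_0(X'))\cdot r^*(\Cont_0(B)) \subseteq \Contb(X)\cdot r^*(\Cont_0(B)) = H_X.
\]
Nondegeneracy of the restricted $f^*\colon \Cont_0(X')\to H_X$ is then immediate: $f^*(\Cont_0(X'))\cdot H_X$ already contains $r^*(\Cont_0(B))\cdot H_X$, which is dense in $H_X$ by an approximate-identity argument in $\Cont_0(B)$.

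The $C^*$\nb-algebraic description of continuous proper maps recalled before the definition of $\rscc{B} X$ now delivers a unique continuous proper map $f'\colon \rscc{B} X\to X'$ whose pullback on $\Cont_0(X')$ coincides with this restricted $f^*$; properness is automatic because the pullback lands in $\Cont_0(\rscc{B} X)$ rather than only in its multiplier algebra. The identity $f'\circ i = f$ is a character chase: the character $i(x) = \mathrm{ev}_x\circ i^*$ composed with $f^*$ equals $\mathrm{ev}_{f(x)}$ on $\Cont_0(X')$, so $f'(i(x)) = f(x)$. Uniqueness of $f'$ among continuous maps making the triangle commute follows from \longref{Lemma}{lem:X_dense_in_beta}: two extensions agreeing on the dense subset $i(X)\subseteq \rscc{B} X$ and taking values in the Hausdorff space $X'$ must coincide.

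The only genuinely nontrivial step is the inclusion $f^*(\Cont_0(X'))\subseteq H_X$, and it is precisely there that the properness of $r'$ enters; without it, $f^*$ would generally land only in $\Contb(X)$, producing a map from the ordinary Stone--\v{C}ech compactification rather than from $\rscc{B} X$. Everything else in the argument is formal $C^*$\nb-algebraic bookkeeping together with the density statement of \longref{Lemma}{lem:X_dense_in_beta}.
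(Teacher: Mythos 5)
Your proposal is correct and follows essentially the same route as the paper: translate via Gelfand duality, verify the inclusion \(f^*(\Cont_0(X'))\subseteq H_X\) using the factorisation \(r=r'\circ f\) and the nondegeneracy of \((r')^*\) forced by properness of \(r'\), verify nondegeneracy of the restricted \(f^*\) by an approximate-unit argument in \(\Cont_0(B)\), and deduce uniqueness from density of \(i(X)\) and the Hausdorffness of \(X'\). The only cosmetic difference is that the paper checks nondegeneracy by showing an approximate unit maps to an approximate unit, whereas you check density of \(f^*(\Cont_0(X'))\cdot H_X\) directly; these are equivalent.
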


\begin{proof}
  Let \(f^* \colon \Cont_0(X') \to \Contb(X)\) be the dual map
  of~\(f\) and let \(i^* \colon H_X \hookrightarrow \Contb(X)\) be
  the inclusion map.  Since~\(r'\) is proper, it induces a
  nondegenerate \Star{}homomorphism
  \((r')^* \colon \Cont_0(B) \to \Cont_0(X')\).  We use this to show
  that \(f^*(\Cont_0(X')) \subseteq H_X\):
  \begin{multline*}
    f^*(\Cont_0(X'))
    = f^*({r'}^*(\Cont_0(B)) \cdot \Cont_0(X'))
    \\= f^*({r'}^*(\Cont_0(B))) \cdot f^*(\Cont_0(X'))
    = r^*(\Cont_0(B)) \cdot f^*(\Cont_0(X'))
    \subseteq H_X.
  \end{multline*}
  Let~\((f^*)'\) be~\(f^*\) viewed as a \Star{}homomorphism
  \(\Cont_0(X') \to H_X\).  We claim that~\((f^*)'\) is
  nondegenerate.  The proof uses that a \Star{}homomorphism is
  nondegenerate if and only if it maps an approximate unit again to
  an approximate unit; this well known result goes back at least to
  \cite{Rieffel:Induced_Banach}*{Proposition~3.4}.  Let
  \((e_i)_{i \in I}\) be an approximate unit in \(\Cont_0(B)\).
  Then \({r'}^*(e_i)\) is an approximate unit in \(\Cont_0(X')\).
  Now
  \((f^*)'({r'}^*(e_i)) = r^*(e_i) = i^* ({\rscc{B}} r)^* (e_i)\).
  For any \(\varphi_1 \in \Contb(X)\) and
  \(\varphi_2 \in \Cont_0(B)\),
  \(\norm{\varphi_1 \cdot r^*(\varphi_2)r^*(e_i) - \varphi_1 \cdot
    r^*(\varphi_2)} \le \norm{\varphi_1} \norm{r^*(\varphi_2 e_i) -
    r^*(\varphi_2)} \to 0\), as \(r^*\) is continuous.  Hence
  \(r^*(e_i)\) is an approximate unit in~\(H_X\).  We let
  \(f' \colon \rscc{B} X \to X'\) be the dual of~\((f^*)'\).  This
  is a proper continuous map.  Since~\(X'\) is Hausdorff, two
  continuous maps to~\(X'\) that are equal on a dense subset are
  equal everywhere.  Therefore, \(f'\) is unique by
  \longref{Lemma}{lem:X_dense_in_beta}.
\end{proof}

\begin{corollary}
  \label{2inj}
  The anchor map \(r \colon X \to B\) extends uniquely to a proper
  continuous map \({\rscc{B}} r \colon {\rscc{B}} X \to B\), such that
  the following diagram commutes:
  \[
    \begin{tikzcd}[row sep=small]
      X \ar[rr,  "i"] \ar[dr, "r"']&&
      \rscc{B} X
      \ar[dl, dashed,
      "\exists! \rscc{B} r"] \\
      & B
    \end{tikzcd}
  \]
\end{corollary}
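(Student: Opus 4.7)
The plan is to invoke \longref{Proposition}{betaXtoY} directly, with the target $(X',r')$ chosen to be the base space~$B$ itself, equipped with the identity map $\id_B\colon B\to B$ as its anchor. First I would check that $(B,\id_B)$ lies in $\pgps{B}$: the space~$B$ is Hausdorff by assumption, and $\id_B$ is trivially proper, so this is immediate. Next, I would observe that $r\colon X\to B$ is a morphism in $\gps{B}$ from $(X,r)$ to $(B,\id_B)$, since the required triangle $\id_B\circ r = r$ commutes.

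With these trivial verifications in hand, \longref{Proposition}{betaXtoY} applied to $f\defeq r$ produces a unique continuous map $\rscc{B}r\colon \rscc{B}X\to B$ such that $(\rscc{B}r)\circ i = r$, which is exactly the commutativity of the diagram in the corollary. The proposition also asserts automatically that $\rscc{B}r$ is proper, giving the remaining claim.

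The only step that requires any thought is confirming that the uniqueness clause of \longref{Proposition}{betaXtoY} yields uniqueness among continuous maps extending~$r$ (not merely among $B$-maps), but this is already how the proposition is formulated: uniqueness there follows from \longref{Lemma}{lem:X_dense_in_beta} and the Hausdorffness of the target, neither of which depends on the $B$-map structure. So there is no real obstacle; the corollary is a direct specialisation of the preceding proposition.
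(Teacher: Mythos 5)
Your argument is exactly the paper's proof: specialise \longref{Proposition}{betaXtoY} to $X'=B$ with $r'=\id_B$, noting that $(B,\id_B)$ is a Hausdorff proper $B$\nb-space and that $r$ is then a morphism in $\gps{B}$ to it. The extra remarks about uniqueness and properness are correctly read off from the proposition, so nothing is missing.
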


\begin{proof}
  Since the identity map \(B \to B\) is proper, \(B\) is an object in
  \(\pgps{B}\).  Now apply \longref{Proposition}{betaXtoY} in the case
  where \(X' = B\) and \(f = r \colon X \to B\).
\end{proof}

\begin{proposition}
  In the above setting, the following diagram commutes:
  \[
    \begin{tikzcd}[row sep=small]
      {\rscc{B}}  X \ar[rr, "f'"] \ar[dr, "{\rscc{B}}  r"']&&
      X'\ar[dl,  "r'"]
      \\
      & B
    \end{tikzcd}
  \]
\end{proposition}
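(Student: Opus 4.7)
The plan is to verify commutativity on the dense subset $i(X) \subseteq \rscc{B} X$ and then invoke Hausdorffness of~$B$ to conclude that the two maps $r' \circ f'$ and $\rscc{B} r$ agree everywhere. Since~$B$ is a Hausdorff proper $B$\nb-space (with anchor map the identity), both $r' \circ f'$ and $\rscc{B} r$ are continuous maps $\rscc{B} X \to B$, so it suffices to check equality on the image of~$X$.

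First, I would compute $(r' \circ f') \circ i = r' \circ (f' \circ i) = r' \circ f$, using the defining property of~$f'$ from \longref{Proposition}{betaXtoY}. Since~$f$ is a $B$\nb-map, $r' \circ f = r$. On the other hand, by \longref{Corollary}{2inj}, $(\rscc{B} r) \circ i = r$ as well. Hence $r' \circ f'$ and $\rscc{B} r$ are two continuous maps $\rscc{B} X \to B$ that agree on $i(X)$.

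By \longref{Lemma}{lem:X_dense_in_beta}, $i(X)$ is dense in $\rscc{B} X$, and~$B$ is Hausdorff by assumption. Therefore the two maps coincide on all of $\rscc{B} X$, proving commutativity. Alternatively, one may observe that $r' \circ f'$ is a continuous (in fact proper) extension of $r \colon X \to B$ to $\rscc{B} X$, and the uniqueness clause of \longref{Proposition}{betaXtoY} applied with $X' = B$ (the case of \longref{Corollary}{2inj}) then forces $r' \circ f' = \rscc{B} r$.

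There is no real obstacle here; the argument is a pure diagram chase combined with the universal property. The only point that might deserve a brief mention is that $r' \circ f'$ is automatically continuous (which is immediate) so the density/Hausdorff argument applies.
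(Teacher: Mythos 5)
Your proof is correct. It differs from the paper's in language rather than in substance: the paper argues entirely on the \(\Cst\)\nb-algebra side, observing that \(i^* \circ (f^*)' \circ (r')^* = i^* \circ (\rscc{B} r)^*\) holds by construction and then cancelling the monomorphism \(i^*\colon \Cont_0(\rscc{B}X) \to \Contb(X)\); you argue on the space side, checking that \(r'\circ f'\) and \(\rscc{B} r\) agree on \(i(X)\) and then invoking density of \(i(X)\) together with the Hausdorffness of the target~\(B\). These two arguments are Gelfand duals of one another --- injectivity of \(i^*\) is exactly what makes \(i(X)\) dense (\longref{Lemma}{dense}) --- so each step of yours corresponds to a step of the paper's. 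Your version has the small advantage of not requiring the reader to unwind which dual map is which; the paper's has the advantage of staying within the algebraic framework in which \(f'\) and \(\rscc{B}r\) were actually constructed, so that the first identity really is immediate ``by construction.'' Your closing alternative --- that \(r'\circ f'\) is a continuous extension of \(r\) and hence equals \(\rscc{B}r\) by the uniqueness clause of \longref{Corollary}{2inj} --- is also valid and arguably the cleanest formulation.
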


\begin{proof}
  We get
  \(i^* \circ (f^*)' \circ (r')^* = i^* \circ ({\rscc{B}} r)^*\) by
  construction.  Since~\(i^*\) is a monomorphism, this implies
  \((f^*)' \circ (r')^* = (\rscc{B} r)^*\).
\end{proof}

\begin{theorem}
  \label{the:rscc_reflector}
  \(\rscc{B}\) is a reflector or, equivalently, it is left adjoint to
  the inclusion functor
  \(I \colon \pgps{B} \hookrightarrow \gps{B}\).
\end{theorem}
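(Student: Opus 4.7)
The plan is to unwind the definition of a reflector and reduce the content to \longref{Proposition}{betaXtoY}. Recall that $\rscc{B}$ being left adjoint to $I$ is equivalent to producing a bijection
\[
  \Phi_{X,X'}\colon \mathrm{Hom}_{\pgps{B}}(\rscc{B} X, X') \xrightarrow{\sim} \mathrm{Hom}_{\gps{B}}(X, I(X'))
\]
natural in $X\in\gps{B}$ and $X'\in\pgps{B}$. Once such a natural isomorphism is exhibited, ``reflector'' and ``left adjoint to the inclusion'' are two names for the same piece of data.

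First I would promote $\rscc{B}$ to a functor $\gps{B}\to\pgps{B}$. Given a morphism $f\colon X\to Y$ in $\gps{B}$, the composite $i_Y\circ f\colon X\to \rscc{B} Y$ is a $B$-map whose target lies in $\pgps{B}$: the space $\rscc{B} Y$ is Hausdorff as the Gelfand spectrum of the commutative $\Cst$-algebra $H_Y$, and its anchor map $\rscc{B} r_Y$ is proper by \longref{Corollary}{2inj}. Hence \longref{Proposition}{betaXtoY} supplies a unique $B$-map $\rscc{B} f\colon \rscc{B} X\to \rscc{B} Y$ with $\rscc{B} f\circ i_X = i_Y\circ f$. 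The functor identities $\rscc{B}(\id_X)=\id_{\rscc{B} X}$ and $\rscc{B}(g\circ f)=\rscc{B} g\circ \rscc{B} f$ follow immediately from the uniqueness clause, applied to the $B$-maps $i_X$ and $i_Z\circ g\circ f$ respectively. The same defining identity $I(\rscc{B} f)\circ i_X=i_Y\circ f$ expresses that $(i_X)_X$ is a natural transformation $\eta\colon \id_{\gps{B}}\Rightarrow I\circ \rscc{B}$.

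Finally I would set $\Phi_{X,X'}(g)\defeq I(g)\circ i_X$ and invoke \longref{Proposition}{betaXtoY} a second time: every $f\in \mathrm{Hom}_{\gps{B}}(X, I(X'))$ has a unique preimage $f'$, so $\Phi_{X,X'}$ is bijective with inverse $f\mapsto f'$. Naturality in $X'$ is immediate from functoriality of $I$; naturality in $X$ follows from the naturality of the unit $\eta$ already verified above. This exhibits $\rscc{B}\dashv I$. Since all genuine content has been prepared by the earlier lemmas and \longref{Proposition}{betaXtoY}, there is no real obstacle; the one step that calls for a brief justification is that $\rscc{B} Y$ indeed lies in $\pgps{B}$, which has been handled.
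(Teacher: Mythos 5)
Your proposal is correct and follows essentially the same route as the paper, whose proof is simply the one-line remark that the preceding propositions assemble into the adjunction; you have filled in the standard bookkeeping (functoriality of \(\rscc{B}\), the unit \(\eta=(i_X)\), and the hom-set bijection) that the paper leaves implicit. The only attribution to tidy up is that \longref{Proposition}{betaXtoY} by itself gives a unique \emph{continuous} map \(f'\); that \(f'\) is moreover a \(B\)\nb-map is the content of the unnumbered proposition following \longref{Corollary}{2inj} (or follows from density of \(i_X(X)\) and the Hausdorffness of \(B\)).
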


\begin{proof}
  The propositions above tell us that~\(\rscc{B}\) is left adjoint
  to~\(I\).
\end{proof}

\begin{lemma}
  \label{propercorr}
  Let~\(X\) be a topological space, and let \(Y\) and~\(Z\) be
  locally compact Hausdorff spaces.  Let \(f_1 \colon X \to Y\) be
  continuous and let \(f_2 \colon Y \to Z\) be proper and
  continuous.  Then
  \(\rscc{Y} (X,f_1) \cong \rscc{Z} (X,f_2\circ f_1)\).
\end{lemma}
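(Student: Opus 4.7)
The plan is to unfold both sides of the claimed isomorphism as spectra of $\Cst$-subalgebras of $\Contb(X)$ and show that these two subalgebras coincide. Concretely, by definition
\[
H_X^{(Y)} \defeq \Contb(X)\cdot f_1^*(\Cont_0(Y)),
\qquad
H_X^{(Z)} \defeq \Contb(X)\cdot (f_2\circ f_1)^*(\Cont_0(Z)),
\]
and it suffices to prove $H_X^{(Y)} = H_X^{(Z)}$ as $\Cst$-subalgebras of $\Contb(X)$; then the spectra agree and the canonical maps from $X$ are obviously the same, giving a natural $B$-homeomorphism.

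The central ingredient is that $f_2\colon Y\to Z$ is proper, so $f_2^*\colon\Cont_0(Z)\to\Cont_0(Y)$ is a nondegenerate \Star{}homomorphism; that is,
\[
f_2^*(\Cont_0(Z))\cdot \Cont_0(Y) = \Cont_0(Y),
\]
as recalled in the paragraph preceding the definition of $\rscc{B}$. Applying $f_1^*$ and using $(f_2\circ f_1)^* = f_1^*\circ f_2^*$ gives
\[
f_1^*(\Cont_0(Y)) = f_1^*(f_2^*(\Cont_0(Z)))\cdot f_1^*(\Cont_0(Y))
= (f_2\circ f_1)^*(\Cont_0(Z))\cdot f_1^*(\Cont_0(Y)).
\]
Multiplying on the left by $\Contb(X)$ and using $f_1^*(\Cont_0(Y))\subseteq\Contb(X)$, the right-hand side is contained in $\Contb(X)\cdot(f_2\circ f_1)^*(\Cont_0(Z)) = H_X^{(Z)}$; this yields $H_X^{(Y)}\subseteq H_X^{(Z)}$. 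The reverse inclusion is immediate since $f_2^*(\Cont_0(Z))\subseteq\Cont_0(Y)$ implies $(f_2\circ f_1)^*(\Cont_0(Z))\subseteq f_1^*(\Cont_0(Y))$, hence $H_X^{(Z)}\subseteq H_X^{(Y)}$.

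There is no real obstacle here; the only point to watch is to interpret the product $\Contb(X)\cdot r^*(\Cont_0(B))$ consistently (as the closed linear span, which is the $\Cst$-subalgebra generated by the products) on both sides, so that the equality of sets above really is an equality of $\Cst$-subalgebras. Once that is noted, taking spectra gives the desired homeomorphism, and a sanity check via \longref{Corollary}{2inj} confirms compatibility with the base: the induced map $\rscc{Y}(X,f_1)\to Y\to Z$ agrees with the proper extension $\rscc{Z} (f_2\circ f_1)$ because both correspond under Gelfand duality to the inclusion $(f_2\circ f_1)^*(\Cont_0(Z))\subseteq H_X^{(Y)} = H_X^{(Z)}$.
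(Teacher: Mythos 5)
Your proof is correct and follows essentially the same route as the paper: both reduce the claim to the equality of the subalgebras $\Contb(X)\cdot f_1^*(\Cont_0(Y))$ and $\Contb(X)\cdot(f_2 f_1)^*(\Cont_0(Z))$, obtain the inclusion $\supseteq$ from $f_2^*(\Cont_0(Z))\subseteq\Cont_0(Y)$, and obtain $\subseteq$ from the nondegeneracy identity $f_2^*(\Cont_0(Z))\cdot\Cont_0(Y)=\Cont_0(Y)$ (properness of $f_2$) pushed forward by $f_1^*$ and multiplied by $\Contb(X)$. Your closing remarks on interpreting the products as closed linear spans and on compatibility with the base maps are harmless additions not spelled out in the paper's proof.
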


\begin{proof}
  It suffices to show that
  \(\Contb(X) \cdot {f_1}^*(\Cont_0(Y)) = \Contb(X) \cdot
  (f_2f_1)^*(\Cont_0(Z))\).  Since~\(f_2\) is proper,
  \(f_2^* \colon \Cont_0(Z) \to \Cont_0(Y)\) is nondegenerate.  In
  particular, \(f_1^*(f_2^*(\Cont_0(Z)) \subseteq f_1^*(\Cont_0(Y))\),
  giving the inclusion~``\(\supseteq\)''.  Since
  \(f_2^*(\Cont_0(Z)) \cdot \Cont_0(Y) = \Cont_0(Y)\), we compute
  \[
    f_1^*(\Cont_0(Y)) = f_1^*(f_2^*(\Cont_0(Z) \cdot \Cont_0(Y)))
    = (f_2f_1)^*(\Cont_0(Z)) \cdot f_1^*(\Cont_0(Y))
  \]
  and then
  \[
    \Contb(X) \cdot f_1^*(\Cont_0(Y)) = \Contb(X) \cdot
    (f_2f_1)^*(\Cont_0(Z)) \cdot f_1^*(\Cont_0(Y)) \subseteq \Contb(X)
    \cdot (f_2f_1)^*(\Cont_0(Z)).\qedhere
  \]
\end{proof}

\begin{lemma}
  \label{sq}
  In a commuting diagram of topological spaces and continuous maps
  \[
    \begin{tikzcd}
      X_1 \ar[rr,  "f" ] \ar[d, "r_1"']&& X_2
      \ar[d,  "r_2"] \\
      B_1 \ar[rr,  "f_0"] && B_2,
    \end{tikzcd}
  \]
  assume \(B_1\) and~\(B_2\) to be locally compact Hausdorff
  and~\(f_0\) to be proper.  Then there is a unique continuous map
  \(\tilde{f} \colon \rscc{B_1} X_1 \to \rscc{B_2} X_2\) that makes
  the following diagram commute:
  \[
    \begin{tikzcd}
      X_1 \ar[rr,  "f" ] \ar[d, "i_1"']&& X_2
      \ar[d,  "i_2"] \\
      \rscc{B_1} X_1 \ar[rr, dotted, "\tilde{f}"] \ar[d, "\rscc{B_1}
      r_1"']&& \rscc{B_2} X_2
      \ar[d,  "\rscc{B_2} r_2"] \\
      B_1 \ar[rr,  "f_0"] && B_2
    \end{tikzcd}
  \]
\end{lemma}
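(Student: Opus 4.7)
The plan is to reduce the existence of $\tilde f$ to the universal property of $\rscc{B_2} X_2$ established in Proposition~\ref{betaXtoY}, using the properness of $f_0$ to change the base space of the relative Stone--\v{C}ech compactification on the left via Lemma~\ref{propercorr}.

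First I would view $X_1$ as a $B_2$-space with anchor map $f_0 \circ r_1 = r_2 \circ f$, so that Lemma~\ref{propercorr} (applied with $Y = B_1$, $Z = B_2$, $f_1 = r_1$, $f_2 = f_0$) yields a canonical homeomorphism $\rscc{B_1}(X_1, r_1) \cong \rscc{B_2}(X_1, f_0 \circ r_1)$ intertwining the inclusions of $X_1$, since the two spectra are built from the same $\Cst$\nb-subalgebra of $\Contb(X_1)$. By Corollary~\ref{2inj}, $\rscc{B_2} X_2$ equipped with $\rscc{B_2} r_2$ is a proper Hausdorff $B_2$-space, and the composite $i_2 \circ f \colon X_1 \to \rscc{B_2} X_2$ is a $B_2$-map because $\rscc{B_2} r_2 \circ i_2 \circ f = r_2 \circ f = f_0 \circ r_1$. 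Proposition~\ref{betaXtoY} therefore produces a unique continuous map $\rscc{B_2}(X_1, f_0 \circ r_1) \to \rscc{B_2} X_2$ extending $i_2 \circ f$; transporting along the homeomorphism above gives the desired $\tilde f$, and commutativity of the top square is automatic from the construction.

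For the bottom square I would observe that $\rscc{B_2} r_2 \circ \tilde f$ and $f_0 \circ \rscc{B_1} r_1$ are both continuous maps $\rscc{B_1} X_1 \to B_2$ agreeing with $f_0 \circ r_1$ on the dense subset $X_1$ (by Lemma~\ref{lem:X_dense_in_beta}), and so they coincide since $B_2$ is Hausdorff. Uniqueness of $\tilde f$ itself follows from the same density-plus-Hausdorff argument applied to the target $\rscc{B_2} X_2$. The only step that requires real care is the application of Lemma~\ref{propercorr}: this is where the hypothesis that $f_0$ is proper enters essentially, for without it one could not identify $\rscc{B_1} X_1$ with a relative Stone--\v{C}ech compactification over $B_2$, and the universal property of Proposition~\ref{betaXtoY} could not be invoked.
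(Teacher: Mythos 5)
Your proposal is correct and follows essentially the same route as the paper: both change the base of \(\rscc{B_1} X_1\) to \(B_2\) via \longref{Lemma}{propercorr} (which is exactly where the properness of \(f_0\) enters), then obtain \(\tilde f\) from the universal property of the relative compactification over \(B_2\), and conclude uniqueness from density of the image of \(X_1\) and Hausdorffness of \(\rscc{B_2} X_2\). The only cosmetic difference is that the paper packages the middle step as functoriality of the reflector \(\rscc{B_2}\) applied to \(f\), whereas you invoke \longref{Proposition}{betaXtoY} directly with target \(\rscc{B_2} X_2\); these are the same mechanism.
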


\begin{proof}
  Since~\(f_0\) is proper, \(B_1\) is an object in the category of
  Hausdorff proper \(B_2\)\nb-spaces.  Then
  \longref{Theorem}{the:rscc_reflector} implies
  \(\rscc{B_2} B_1 \cong B_1\) and gives a commuting diagram
  \[
    \begin{tikzcd}
      X_1 \ar[rr,  "f" ] \ar[d, "i_1^2"']&& X_2
      \ar[d, "i_2"] \\
      \rscc{B_2} X_1 \ar[rr,  "\rscc{B_2} f" ] \ar[d, "\rscc{B_2} r_1"']&&
      \rscc{B_2} X_2
      \ar[d,  "\rscc{B_2} r_2"] \\
      B_1 \ar[rr,  "f_0"] && B_2.
    \end{tikzcd}
  \]
  \longref{Lemma}{propercorr} gives
  \(\rscc{B_1} X_1 = \rscc{B_2} X_1\), and this turns the diagram
  above into what we need.  The map \(\tilde{f} =\rscc{B_2} f\) is
  unique because~\(\rscc{B_2} X_2\) is Hausdorff and the image
  of~\(X_1\) in~\(\rscc{B_2} X_1\) is dense by
  \longref{Lemma}{lem:X_dense_in_beta}.
\end{proof}

\begin{lemma}
  \label{equalityextension}
  Given a commuting diagram of continuous maps
  \[
    \begin{tikzcd}
      X_1 \ar[rr, bend left, "f"] \ar[r,  "h"'] \ar[d, "\rg_1"']&
      X_2 \ar[r, "g"' ] \ar[d,  "\rg_2"] &
      X_3 \ar[d,  "\rg_3"] \\
      B_1 \ar[r, "h_0"] \ar[rr, bend right, "f_0"']
      & B_2 \ar[r, "g_0"] & B_3
    \end{tikzcd}
  \]
  with locally compact Hausdorff spaces~\(B_j\) and proper \(h_0\)
  and~\(g_0\), the maps constructed in \longref{Lemma}{sq} satisfy
  \(\tilde{f} = \tilde{g} \circ \tilde{h}\).
\end{lemma}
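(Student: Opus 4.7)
The plan is to read the equation $\tilde{f} = \tilde{g}\circ\tilde{h}$ as an instance of the uniqueness clause in \longref{Lemma}{sq}. The outer rectangle of the given diagram --- with $f = g\circ h$ as top edge and $f_0 = g_0\circ h_0$ as bottom edge --- satisfies the hypotheses of that lemma because $f_0$ is proper as a composition of proper maps between locally compact Hausdorff spaces. Thus $\tilde{f}$ is already characterised as the unique continuous map $\rscc{B_1} X_1 \to \rscc{B_3} X_3$ such that $\tilde{f}\circ i_1 = i_3\circ f$.

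Next I would verify that $\tilde{g}\circ\tilde{h}$ satisfies this defining property. Chaining the commuting diagrams produced by the two instances of \longref{Lemma}{sq} that yield $\tilde{h}$ and $\tilde{g}$, I compute
\[
  (\tilde{g}\circ\tilde{h})\circ i_1 = \tilde{g}\circ i_2\circ h = i_3\circ g\circ h = i_3\circ f.
\]
Since $\rscc{B_3} X_3$ is Hausdorff and $i_1(X_1)$ is dense in $\rscc{B_1} X_1$ by \longref{Lemma}{lem:X_dense_in_beta}, any two continuous maps $\rscc{B_1} X_1 \to \rscc{B_3} X_3$ that agree on $i_1(X_1)$ coincide, and we conclude $\tilde{f} = \tilde{g}\circ\tilde{h}$. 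For completeness one may also check the bottom square,
\[
  \rscc{B_3} r_3 \circ \tilde{g}\circ\tilde{h}
  = g_0 \circ \rscc{B_2} r_2 \circ \tilde{h}
  = g_0\circ h_0 \circ \rscc{B_1} r_1
  = f_0 \circ \rscc{B_1} r_1,
\]
but this is not needed for the conclusion once the top square has been verified.

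I do not anticipate any genuine obstacle: the assertion is the functoriality of the relative Stone--Čech compactification of \longref{Theorem}{the:rscc_reflector} applied to the outer composite of the given diagram, expressed at the level of spaces rather than C*-algebras. The only point requiring care is to recognise that $\tilde{f}$ is a single instance of \longref{Lemma}{sq} and not a compatibility between two instances; once this is noted, density of $i_1(X_1)$ together with the Hausdorffness of the target do all the work.
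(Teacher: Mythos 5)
Your proposal is correct and is essentially the paper's own argument: the paper's proof consists of the single sentence that $\tilde{g}\circ\tilde{h}$ has the properties that uniquely characterise $\tilde{f}$, and you have simply spelled out that verification (the top-square computation plus the density/Hausdorff uniqueness from \longref{Lemma}{sq}). No gaps.
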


\begin{proof}
  The map~\(\tilde{g} \circ \tilde{h}\) also has the properties that
  uniquely characterise~\(\tilde{f}\).
\end{proof}

\begin{lemma}
  \label{extiso}
  If the map~\(f\) in \longref{Lemma}{sq} is a homeomorphism, then so
  is~\(\tilde{f}\).
\end{lemma}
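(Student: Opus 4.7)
The plan is to identify $\tilde f$ with the image of $f$ under a functor and invoke that functors preserve isomorphisms. By \longref{Lemma}{propercorr} applied to $r_1\colon X_1\to B_1$ and the proper map $f_0\colon B_1\to B_2$, the relative Stone--\v{C}ech compactifications $\rscc{B_1} X_1$ and $\rscc{B_2} X_1$ canonically coincide, where on the right $X_1$ is regarded as a $B_2$-space via the anchor $f_0\circ r_1$. The construction in the proof of \longref{Lemma}{sq} in fact \emph{defines} $\tilde f$ to be $\rscc{B_2} f\colon \rscc{B_2} X_1\to \rscc{B_2} X_2$ under this identification, so it suffices to prove that $\rscc{B_2} f$ is a homeomorphism.

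The commutativity of the square in \longref{Lemma}{sq} says exactly that $f\colon (X_1, f_0\circ r_1)\to (X_2, r_2)$ is a morphism in $\gps{B_2}$. If $f$ is a homeomorphism of the underlying topological spaces, then its set-theoretic inverse $f^{-1}\colon X_2\to X_1$ is automatically a $B_2$-map, since $(f_0\circ r_1)\circ f^{-1} = r_2\circ f\circ f^{-1} = r_2$. Hence $f$ is an isomorphism in the category $\gps{B_2}$, with inverse $f^{-1}$. By \longref{Theorem}{the:rscc_reflector}, $\rscc{B_2}$ is a left adjoint $\gps{B_2}\to \pgps{B_2}$, in particular a functor, and every functor preserves isomorphisms. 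Therefore $\rscc{B_2} f$ is an isomorphism in $\pgps{B_2}$, with inverse $\rscc{B_2}(f^{-1})$; this is in particular a homeomorphism of the underlying topological spaces, proving the claim.

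No serious obstacle arises: all ingredients have already been prepared in the preceding lemmas. The only delicate point is to work with $\rscc{B_2}$ rather than $\rscc{B_1}$, so that both $f$ and $f^{-1}$ live in a single category on which $\rscc{B_2}$ acts as a functor; this is made possible by \longref{Lemma}{propercorr}. An alternative, more pedestrian route would apply \longref{Lemma}{equalityextension} to the two composites $f^{-1}\circ f = \id_{X_1}$ and $f\circ f^{-1} = \id_{X_2}$ and combine this with the identity $\widetilde{\id} = \id$ coming from the uniqueness clause in \longref{Lemma}{sq}, but that would require filling in the reversed square with a proper map $B_2\to B_1$, which is automatic only when $f_0$ itself is a homeomorphism; the functorial argument sketched above sidesteps this restriction and works under the original hypotheses of \longref{Lemma}{sq}.
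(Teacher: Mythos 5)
Your proof is correct, but it takes a genuinely different route from the paper's. The paper's entire proof is the single sentence ``Apply \longref{Lemma}{equalityextension} to the compositions \(f\circ f^{-1}\) and \(f^{-1}\circ f\)'' --- precisely the ``pedestrian route'' that you describe and then set aside. Your reservation about that route is well taken: to extend \(f^{-1}\) via \longref{Lemma}{sq} one must fill the reversed square with a proper continuous map \(B_2 \to B_1\), which the hypotheses do not supply in general. In every place the paper actually invokes \longref{Lemma}{extiso} (\longref{Proposition}{extendgpaction} and \longref{Lemma}{lem:bisections_act_on_rsc}), the bottom map \(f_0\) is itself a homeomorphism, so there the paper's argument goes through with \(g_0 = f_0^{-1}\) together with \(\widetilde{\id} = \id\) from the uniqueness clause of \longref{Lemma}{sq}; but the lemma as stated assumes only that \(f_0\) is proper. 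Your functorial argument is cleaner and covers the general case: the identification \(\tilde f = \rscc{B_2} f\) under \(\rscc{B_1} X_1 \cong \rscc{B_2} X_1\) is exactly how \(\tilde f\) is constructed in the proof of \longref{Lemma}{sq}, the computation \((f_0\circ r_1)\circ f^{-1} = r_2\circ f\circ f^{-1} = r_2\) shows that \(f\) is invertible in \(\gps{B_2}\), and the left adjoint \(\rscc{B_2}\) of \longref{Theorem}{the:rscc_reflector}, being a functor, preserves isomorphisms. What your approach buys is a proof of the lemma exactly as stated, with no hidden invertibility assumption on \(f_0\); what the paper's one-line proof buys is brevity, at the cost of being strictly valid only in the situations where it is later applied.
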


\begin{proof}
  Apply \longref{Lemma}{equalityextension} to the compositions
  \(f\circ f^{-1}\) and \(f^{-1}\circ f\).
\end{proof}

The following results will be used in the next section to extend an
action of a diagram to the relative Stone--\v{C}ech
compactification.

\begin{lemma}
  \label{lem:rsc_disjoint_union}
  Let~\(I\) be a set, let~\(B_i\) for \(i\in I\) be locally compact
  Hausdorff spaces, and let \(r_i\colon Y_i\to B_i\) be topological spaces
  over~\(B_i\).  Let \(B = \bigsqcup_{i\in I} B_i\) and
  \(Y = \bigsqcup_{i\in I} Y_i\) with the induced map
  \(r\colon Y\to B\).  Then
  \(\rscc{B} Y \cong \bigsqcup_{i\in I} \rscc{B_i} Y_i\).
\end{lemma}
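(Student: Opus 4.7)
The plan is to verify that $W \defeq \bigsqcup_{i\in I} \rscc{B_i} Y_i$, equipped with the evident anchor map to~$B$ and the canonical map from~$Y$, satisfies the universal property of $\rscc{B} Y$ provided by \longref{Theorem}{the:rscc_reflector}. The key enabling observation is that each inclusion $B_i \hookrightarrow B$ is proper: the~$B_i$ are clopen in~$B$, so any compact $K \subseteq B$ is covered by finitely many of them and meets each~$B_i$ in a closed, hence compact, subset.

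First I would check that~$W$ is a proper, Hausdorff $B$-space. Hausdorffness is inherited from the components. For properness of the anchor map $\bigsqcup_i \rscc{B_i} r_i \colon W \to B$, any compact $K \subseteq B$ lies in finitely many~$B_i$, so its preimage in~$W$ is a finite disjoint union of the compact sets $(\rscc{B_i} r_i)^{-1}(K \cap B_i)$.

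Next, given any proper Hausdorff $B$-space~$(Z, r_Z)$, the clopen decomposition of~$B$ forces $Z = \bigsqcup_i Z_i$ with $Z_i \defeq r_Z^{-1}(B_i)$ clopen, and each restriction $r_Z|_{Z_i} \colon Z_i \to B_i$ is again proper because preimages of compacts in~$B_i$ coincide with their preimages in~$B$. Any $B$-map $f \colon Y \to Z$ therefore restricts to $B_i$-maps $f_i \colon Y_i \to Z_i$, and by \longref{Proposition}{betaXtoY} these extend uniquely to continuous maps $\tilde{f}_i \colon \rscc{B_i} Y_i \to Z_i$. Assembling them yields the unique $B$-map $\tilde{f} \colon W \to Z$ extending~$f$, since any other extension must restrict componentwise to~$\tilde{f}_i$. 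This verifies the universal property and hence $W \cong \rscc{B} Y$.

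The main obstacle is essentially just the initial observation that the clopen inclusions $B_i \hookrightarrow B$ are proper and that proper $B$-spaces split along the partition of~$B$; everything else reduces to a componentwise application of \longref{Proposition}{betaXtoY}. An alternative and perhaps shorter route would be a direct $\Cst$-algebraic computation identifying the defining subalgebra $H_Y = \Contb(Y) \cdot r^*(\Cont_0(B))$ with the $c_0$-direct sum $\bigoplus^{c_0}_{i\in I} H_{Y_i}$, using that $\Cont_0(B) = \bigoplus^{c_0}_i \Cont_0(B_i)$; the spectrum of this $c_0$-direct sum is automatically $\bigsqcup_i \rscc{B_i} Y_i$.
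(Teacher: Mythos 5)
Your proposal is correct. It rests on the same underlying decomposition as the paper's proof but runs it in the opposite direction: the paper observes that \((Y_i,r_i)\mapsto (Y,r)\) is an equivalence of categories \(\prod_{i\in I}\gps{B_i}\simeq\gps{B}\) restricting to \(\prod_{i\in I}\pgps{B_i}\simeq\pgps{B}\), notes that a product of reflectors is again a reflector, and concludes by uniqueness of left adjoints --- without ever naming the candidate space or checking any properness by hand. You instead exhibit the candidate \(\bigsqcup_{i}\rscc{B_i}Y_i\) and verify its universal property directly, which forces you to make explicit precisely the facts the paper compresses into the sentence ``a space over \(B\) is Hausdorff and proper if and only if its pieces over the \(B_i\) are'': the clopen inclusions \(B_i\hookrightarrow B\) are proper, a compact subset of \(B\) meets only finitely many \(B_i\), and a proper Hausdorff \(B\)\nb-space splits along the partition into proper Hausdorff \(B_i\)\nb-spaces. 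Your version is more elementary and self-contained, needing only \longref{Proposition}{betaXtoY} applied componentwise; the paper's is shorter and avoids the pointwise checks at the cost of invoking the abstract uniqueness of adjoints. Your sketched \(\Cst\)\nb-algebraic alternative, identifying \(H_Y\) with the \(c_0\)-direct sum of the \(H_{Y_i}\), is also sound and is arguably the route closest to the actual definition of \(\rscc{B}\) as a spectrum.
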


\begin{proof}
  The map that takes the family \((Y_i,r_i)\) of spaces over~\(B_i\)
  to \((Y,r)\) as a space over~\(B\) is an equivalence of categories
  from the product of categories \(\prod_{i\in I} \gps{B_i}\) to the
  category \(\gps{B}\).  A space over~\(B\) is Hausdorff and proper
  if and only if its pieces over~\(B_i\) are Hausdorff and proper
  for all \(i\in I\).  That is, the isomorphism of categories above
  identifies the subcategory \(\pgps{B}\) of Hausdorff and proper
  \(B\)\nb-spaces with the product of the subcategories
  \(\pgps{B_i}\).  The product of the reflectors
  \(\rscc{B_i}\colon \gps{B_i} \to \pgps{B_i}\) is a reflector
  \(\prod_{i\in I} \gps{B_i} \to \prod_{i\in I} \pgps{B_i}\).  Under
  the equivalence above, this becomes the reflector~\(\rscc{B}\).
  Both reflectors must be naturally isomorphic.
\end{proof}

\begin{lemma}
  \label{fonpreimg}
  Let~$\Gr$ be a locally compact groupoid, let~$V$ be an open subset
  of~$\Gr^0$, and let $(Z,k\colon Z\to\Gr^0)$ be a locally compact
  Hausdorff space over~$\Gr^0$.  Then
  $\Cont_0(k^{-1}(V)) \cong k^*(\Cont_0(V)) \cdot \Cont_0(Z)$.
\end{lemma}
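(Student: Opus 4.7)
The plan is to interpret both sides as closed subspaces of \(\Cont_0(Z)\) and prove the two inclusions separately. Since \(k^{-1}(V)\) is open in the locally compact Hausdorff space~\(Z\), I first identify \(\Cont_0(k^{-1}(V))\) with the closed ideal of \(\Cont_0(Z)\) consisting of functions vanishing on \(Z\setminus k^{-1}(V)\), via extension by zero; similarly \(\Cont_0(V)\) identifies with the corresponding ideal of \(\Cont_0(\Gr^0)\). Under these identifications, \(k^*\) sends \(\Cont_0(V)\) into \(\Contb(Z)\), and each \(k^*(h)\) vanishes on \(Z\setminus k^{-1}(V)\).

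The inclusion \(\supseteq\) is immediate: for \(h\in\Cont_0(V)\) and \(\varphi\in\Cont_0(Z)\), the product \(k^*(h)\varphi\) lies in \(\Cont_0(Z)\) because \(k^*(h)\) is bounded, and it vanishes on \(Z\setminus k^{-1}(V)\) because \(k^*(h)\) does; so it belongs to the ideal \(\Cont_0(k^{-1}(V))\). This ideal is closed, so it also contains the closed linear span \(k^*(\Cont_0(V))\cdot\Cont_0(Z)\).

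For the reverse inclusion, I use an approximate unit argument. Let \((e_i)_{i\in I}\) be a bounded approximate unit for \(\Cont_0(V)\); by a standard Urysohn argument, \(e_i\to 1\) uniformly on every compact subset of~\(V\). Fix \(f\in\Cont_0(k^{-1}(V))\) and \(\varepsilon>0\), and pick a compact \(L\subseteq k^{-1}(V)\) with \(\abs{f}<\varepsilon\) outside~\(L\). Then \(k(L)\) is compact in~\(V\), so for all sufficiently large~\(i\) one has \(\abs{k^*(e_i)-1}<\varepsilon\) on~\(L\); a two-case estimate on and off~\(L\) yields \(\norm{k^*(e_i)f-f}_\infty\to 0\). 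Since each \(k^*(e_i)f\) lies in \(k^*(\Cont_0(V))\cdot\Cont_0(k^{-1}(V))\subseteq k^*(\Cont_0(V))\cdot\Cont_0(Z)\), and the latter is closed, \(f\) lies in \(k^*(\Cont_0(V))\cdot\Cont_0(Z)\). The only genuinely nontrivial point is verifying that \((k^*(e_i))\) acts as an approximate identity on \(\Cont_0(k^{-1}(V))\) in the sup norm, which the estimate above supplies.
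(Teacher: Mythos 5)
Your proof is correct, but it takes a genuinely different route from the paper. The paper observes that \(J \defeq k^*(\Cont_0(V)) \cdot \Cont_0(Z)\) is a (closed) ideal in \(\Cont_0(Z)\) and invokes the Gelfand correspondence between closed ideals of \(\Cont_0(Z)\) and open subsets of~\(Z\): the ideal~\(J\) equals \(\Cont_0(\hat{J})\) for the open set~\(\hat{J}\) of points where some element of~\(J\) is nonzero, and a one-line pointwise computation (using that \(\Cont_0(Z)\) separates points from~\(\infty\) and that \(k^*(g)(z)=g(k(z))\)) identifies \(\hat{J}\) with \(k^{-1}(V)\). You instead realise both sides as closed subspaces of \(\Cont_0(Z)\) and prove the two inclusions by hand, the nontrivial one by pulling back a bounded approximate unit of \(\Cont_0(V)\) along~\(k\) and checking, via the compact-set estimate, that it acts as an approximate identity on \(\Cont_0(k^{-1}(V))\) in the sup norm. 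The paper's argument is shorter because it outsources the analysis to the ideal--open-set dictionary; yours is more elementary and self-contained, at the cost of the approximate-unit estimate. Both arguments rely on the same convention that \(A\cdot B\) denotes the closed linear span of products (equivalently, by Cohen--Hewitt, the set of products), which you make explicit and the paper leaves implicit; with that understanding, your appeal to closedness of \(k^*(\Cont_0(V))\cdot\Cont_0(Z)\) and of the ideal of functions vanishing off \(k^{-1}(V)\) is sound, and the identification \(\Cont_0(U)\cong\setgiven{f\in\Cont_0(Z)}{f|_{Z\setminus U}=0}\) for open \(U\subseteq Z\) that you use at the outset is the standard one.
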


\begin{proof}
  Let $J \defeq k^*(\Cont_0(V)) \cdot \Cont_0(Z)$.  This is
  an ideal in $\Cont_0(Z)$.  So its spectrum~\(\hat{J}\) is an open
  subset of~$Z$.  Namely, it consists of those \(z\in Z\) for which
  there is \(f\in J\) with \(f(z)\neq0\).  There is always
  \(h\in \Cont_0(Z)\) with \(h(z) \neq0\).  Therefore,
  \(z\in\hat{J}\) if and only if there is \(g\in \Cont_0(V)\) with
  \(k^*(g)(z)\neq0\).  Since \(k^*(g)(z) = g(k(z))\), such a~\(g\)
  exists if and only if \(k(z)\in V\).  Thus
  \(\hat{J} = k^{-1}(V)\).
\end{proof}

\begin{lemma}
  \label{lem:ident}
  Let~\(B\) be a locally compact Hausdorff space and let
  \(V\subseteq B\) be an open subset.  Let \((Y,\rg_Y)\) be
  a space over~\(B\) and let
  \((\rscc{B} Y,\rg_{\rscc{B} Y}) \in \gps{B}\) be its relative
  Stone--\v{C}ech compactification.  Then
  \(\rg_Y^{-1}(V) \subseteq Y\) is a space over~\(V\), so that
  \(\rscc{V} (\rg_Y^{-1} (V))\) is defined, and
  \(\rscc{V} (\rg_Y^{-1} (V)) \cong (\rg_{\rscc{B} Y})^{-1} (V)
  \subseteq\rscc{B} Y\).
\end{lemma}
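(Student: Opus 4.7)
My plan is to identify $W' \defeq (\rg_{\rscc{B} Y})^{-1}(V)$, with its restricted anchor to $V$, as a model for $\rscc{V} W$ where $W \defeq \rg_Y^{-1}(V)$ and $\rg_W \defeq \rg_Y|_W$, by combining the universal property with a $C^*$\nb-algebra computation.

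First I would check that $W'$ lies in $\pgps{V}$: it is open in the locally compact Hausdorff space $\rscc{B} Y$ (locally compact by \longref{Remark}{rem:proper_is_locally_compact}) and hence itself locally compact Hausdorff, and for any compact $K \subseteq V$ the preimage of $K$ under $\rg_{\rscc{B} Y}|_{W'}$ coincides with its preimage in $\rscc{B} Y$, which is compact since $\rscc{B} r$ is proper by \longref{Corollary}{2inj}; \longref{Proposition}{pro:proper_map} then gives properness. The composite $W \hookrightarrow Y \xrightarrow{i} \rscc{B} Y$ factors through $W'$ and is a $V$\nb-map, so the universal property in \longref{Theorem}{the:rscc_reflector} produces a canonical $V$\nb-map $\Psi\colon \rscc{V} W \to W'$. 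The task is to show that $\Psi$ is a homeomorphism.

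By Gelfand duality, this reduces to showing that $\Psi^*\colon \Cont_0(W') \to \Cont_0(\rscc{V} W) = H_W$ is an isomorphism of $C^*$\nb-algebras, where $H_W \defeq \Contb(W) \cdot \rg_W^*(\Cont_0(V))$. Applying \longref{Lemma}{fonpreimg} to the locally compact Hausdorff $B$\nb-space $\rscc{B} Y$, and then using $\Cont_0(\rscc{B} Y) \cong H_Y$ together with $i^* \circ (\rg_{\rscc{B} Y})^* = \rg_Y^*$, I would identify
\[
  \Cont_0(W') \cong J \defeq \rg_Y^*(\Cont_0(V)) \cdot H_Y = \Contb(Y) \cdot \rg_Y^*(\Cont_0(V)) \subseteq \Contb(Y),
\]
where the second equality uses $\Cont_0(V) \cdot \Cont_0(B) = \Cont_0(V)$ (as $\Cont_0(V)$ is an ideal in $\Cont_0(B)$). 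Under these identifications $\Psi^*$ becomes the restriction map $J \to \Contb(W)$, $\phi \mapsto \phi|_W$, and it remains to check that this map has image exactly $H_W$.

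The main step is surjectivity onto $H_W$, and this is where I expect the real obstacle. Given a generator $f \cdot \rg_W^*(h)$ of $H_W$ with $f \in \Contb(W)$ and $h \in \Cont_c(V)$ (dense in $\Cont_0(V)$), I would extend by zero to $\tilde f\colon Y \to \C$. Continuity on $W$ and on $Y \setminus \overline{W}$ is immediate; at a point $y \in \overline{W} \setminus W$ one has $\rg_Y(y) \notin V$, so $\rg_Y^*(h)(y) = 0$, and since $\rg_Y^*(h)$ is continuous on all of $Y$ while $f$ is bounded on $W$, this forces $\tilde f$ to be continuous at $y$. Choosing $h' \in \Cont_c(V)$ with $h' \equiv 1$ on $\mathrm{supp}(h)$, one verifies $\tilde f = \tilde f \cdot \rg_Y^*(h') \in J$ and $\tilde f|_W = f \cdot \rg_W^*(h)$. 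Every element of $J$ vanishes off $W$ thanks to the factor $\rg_Y^*(h)$, so the restriction is injective and hence isometric; its image is therefore a closed $C^*$\nb-subalgebra containing the dense linear span of the generators, so it equals $H_W$. The continuity-at-boundary verification is really the crux: the factor $\rg_Y^*(h)$ is what kills the potentially bad behaviour of $f$ along $\partial W$.
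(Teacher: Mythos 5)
Your proposal is correct and follows essentially the same route as the paper: both reduce, via \longref{Lemma}{fonpreimg} and Gelfand duality, to showing that restriction of functions gives an isomorphism \(\Contb(Y)\cdot \rg_Y^*(\Cont_0(V)) \to \Contb(\rg_Y^{-1}(V))\cdot \rg_Y^*(\Cont_0(V))\), with the crux in both cases being the continuity of a zero-extension across the boundary of \(\rg_Y^{-1}(V)\). The only difference is in the surjectivity step, and it is minor: the paper uses Cohen--Hewitt factorisation and a net argument to extend \(f\cdot h_1\) by zero, whereas you use density of compactly supported functions in \(\Cont_0(V)\) together with a cutoff function and the closedness of the isometric image; both work.
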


\begin{proof}
  We proceed in terms of their \(\Cst\)\nb-algebras.  By definition
  of the relative Stone--\v{C}ech compactification,
  $\rscc{V} (r_Y^{-1} (V))$ is the spectrum of the commutative
  \(\Cst\)\nb-algebra
  $\Contb(r_Y ^{-1}(V)) \cdot r_Y ^* (\Cont_0(V))$.  By
  \longref{Lemma}{fonpreimg}, $(\rsc r_Y)^{-1} (V)$ corresponds to
  $\Cont_0(\rsc Y) \cdot r_Y ^* (\Cont_0(V))$.  Since
  $\Cont_0(\Gr^0) \cdot \Cont_0(V) = \Cont_0(V)$, we compute
  \[
    \Cont_0(\rsc Y) \cdot r_Y^*(\Cont_0(V))
    = \Contb(Y) \cdot r_Y^*(\Cont_0(\Gr^0)) \cdot r_Y^*(\Cont_0(V))
    = \Contb(Y) \cdot r_Y^*(\Cont_0(V)).
  \]
  Therefore, it suffices to show that
  $\Contb(r_Y^{-1}(V)) \cdot r_Y^*(\Cont_0(V)) \cong \Contb(Y)
  \cdot r_Y^*(\Cont_0(V))$.

  Bounded functions on~$Y$ restrict to bounded functions
  on~$r_Y^{-1} (V)$, and this restriction map is injective on the
  subalgebra $\Contb(Y) \cdot r_Y ^*(\Cont_0(V))$ because functions
  in this subalgebra vanish outside $r_Y^{-1}(V)$.  Therefore, there
  is an inclusion
  \[
    \varrho \colon
    \Contb(Y) \cdot r_Y ^* (\Cont_0(V)) \hookrightarrow
    \Contb(r_Y^{-1}(V)) \cdot r_Y ^* (\Cont_0(V)).
  \]
  We must prove that it is surjective.  Any element of
  \(\Contb(r_Y^{-1}(V)) \cdot r_Y ^* (\Cont_0(V))\) is of the form
  \(f\cdot h\) with $f \in \Contb(r_Y ^{-1}(V))$ and
  $h \in r_Y ^* (\Cont_0(V))$.  The Cohen--Hewitt Factorisation
  Theorem gives $h_1, h_2 \in r_Y ^* (\Cont_0(V))$ with
  $h = h_1 \cdot h_2$.  Let $\varphi$ be the
  extension of $f \cdot h_1 \colon r_Y ^{-1}(V) \to \C$ by zero.  We
  are going to show that $\varphi$ is continuous
  on~$Y$.  Since
  \(\varrho(\varphi\cdot h_2) = f\cdot h\), it
  follows that~\(\varrho\) is surjective.

  It remains to prove that \(\varphi\) is
  continuous.  The only points where this is unclear are the
  boundary points of \(r_Y^{-1}(V)\).  Let \((y_n)_{n\in N}\) be a
  net that converges towards such a boundary point.  We claim
  that \(\varphi(y_n)\) converges to~\(0\).  This
  proves the claim.  If \(y_n \notin r_Y^{-1}(V)\), then
  \(\varphi(y_n)=0\) by construction.  So it is no
  loss of generality to assume \(y_n \in r_Y^{-1}(V)\) for all
  \(n\in N\).  Then \(r_Y(y_n)\) is a net in~\(V\) that
  converges towards~\(\infty\).  Therefore,
  \(\lim h_1'(r_Y(y_n)) = 0\) for all \(h_1' \in \Cont_0(V)\).  This
  implies \(\lim h_1(y_n) = 0\).  Since~\(f\) is bounded, this
  implies \(\lim \varphi(y_n)=0\).
\end{proof}

\section{Extending actions to the relative Stone--Čech compactification}
\label{sec:extend_action}

The aim of this section is to extend an action of a diagram on a
topological space~\(Y\) with the anchor map
\(\rg_Y \colon Y \to \Gr^0\) to~\(\rsc Y\).  Actions of étale
groupoids are a special case of such diagram actions, and this
special case is a bit easier.  Therefore, we first treat only
actions of groupoids.  Since our aim is to generalise to diagram
actions, we do not complete the proof in this case, however.  We
only prove a more technical result about the action of slices of the
groupoid.

Let~\(\Gr\) be a locally compact étale groupoid acting on a
topological space~\(Y\) with the anchor map
\(\rg_Y \colon Y \to \Gr^0\).  The action of~\(\Gr\) on~\(Y\) may be
encoded as in \longref{Lemma}{lem:F-action_from_theta} by the anchor
map \(\rg_Y\colon Y\to \Gr^0\) and partial homeomorphisms
\(\vartheta_Y(\U)\) of~\(Y\) for all slices \(\U\in \Bis(\Gr)\)
on~\(Y\), subject to some conditions.  In fact, in this case the
conditions simplify quite a bit, but we do not go into this here.
The anchor map~\(\rg_Y\) extends to a continuous map
\(\rsc \rg_Y\colon \rsc Y \to \Gr^0\) by construction.  The
following lemma describes the canonical extension of the partial
homeomorphisms~\(\vartheta_Y(\U)\):

\begin{proposition}
  \label{extendgpaction}
  Let \(\U \in \Bis(\Gr)\).  The partial
  homeomorphism~\(\vartheta_Y (\U)\) of~\(Y\) extends uniquely to a
  partial homeomorphism
  \[
    \vartheta_{\rsc Y}(\U)\colon
    (\rsc \rg_Y)^{-1} (\s(\U)) \congto (\rsc \rg_Y)^{-1} (\rg(\U)).
  \]
  Here ``extends'' means that
  \(\vartheta_{\rsc Y}(\U) \circ i_Y = i_Y \circ \vartheta_Y(\U)\)
  for the canonical map \(i_Y\colon Y \to \rsc Y\).
\end{proposition}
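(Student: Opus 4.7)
The plan is to reduce the assertion to the combination of Lemmas~\ref{sq}, \ref{extiso}, and~\ref{lem:ident}. First, observe that since $\U$ is a slice of the étale groupoid~$\Gr$, the range and source maps restrict to homeomorphisms $\rg|_\U\colon \U\congto \rg(\U)$ and $\s|_\U\colon \U\congto \s(\U)$ onto open subsets of~$\Gr^0$. These combine into a homeomorphism $\U_\dagger \defeq (\rg|_\U)\circ(\s|_\U)^{-1}\colon \s(\U)\congto \rg(\U)$, which is in particular proper. By \ref{en:F-action_from_theta5} and \ref{en:F-action_from_theta6}, the partial homeomorphism $\vartheta_Y(\U)$ has domain $\rg_Y^{-1}(\s(\U))$, image $\rg_Y^{-1}(\rg(\U))$, and satisfies $\rg_Y\circ\vartheta_Y(\U) = \U_\dagger\circ\rg_Y$ on its domain.

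Next, view the restriction of~$\rg_Y$ as an anchor map $\rg_Y^{-1}(\s(\U)) \to \s(\U)$ over the locally compact Hausdorff space $\s(\U)$, and similarly for $\rg(\U)$. Then $\vartheta_Y(\U)$ fits into a commuting square of the type appearing in \longref{Lemma}{sq} with vertical anchor maps to $\s(\U)$ and $\rg(\U)$ and bottom map $\U_\dagger$. \longref{Lemma}{sq} thus provides a unique continuous map
\[
  \widetilde{\vartheta_Y(\U)} \colon \rscc{\s(\U)}(\rg_Y^{-1}(\s(\U)))
  \to \rscc{\rg(\U)}(\rg_Y^{-1}(\rg(\U)))
\]
extending $\vartheta_Y(\U)$ after passing to the relative Stone--\v{C}ech compactifications. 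Applying \longref{Lemma}{extiso} to the homeomorphism~$\vartheta_Y(\U)$ (and to its inverse $\vartheta_Y(\U)^*$, which similarly intertwines the anchors via~$\U_\dagger^{-1}$), we conclude that $\widetilde{\vartheta_Y(\U)}$ is itself a homeomorphism.

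Finally, I would use \longref{Lemma}{lem:ident} with $V = \s(\U)$ and $V=\rg(\U)$ respectively to identify
\[
  \rscc{\s(\U)}(\rg_Y^{-1}(\s(\U))) \cong (\rsc\rg_Y)^{-1}(\s(\U)),
  \quad
  \rscc{\rg(\U)}(\rg_Y^{-1}(\rg(\U))) \cong (\rsc\rg_Y)^{-1}(\rg(\U)).
\]
Transporting $\widetilde{\vartheta_Y(\U)}$ along these homeomorphisms produces the desired partial homeomorphism $\vartheta_{\rsc Y}(\U)$ of $\rsc Y$ with the required domain and image. Uniqueness of the extension is immediate: $\rsc Y$ is Hausdorff and $i_Y(Y)$ is dense in $\rsc Y$ by \longref{Lemma}{lem:X_dense_in_beta}, so two continuous partial homeomorphisms extending $\vartheta_Y(\U)$ on the open subset $(\rsc\rg_Y)^{-1}(\s(\U))$ and agreeing with $i_Y\circ\vartheta_Y(\U)$ on the dense subset $i_Y(\rg_Y^{-1}(\s(\U)))$ must coincide.

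The main conceptual step is to recognise the square whose horizontal arrow is $\vartheta_Y(\U)$ and whose vertical arrows are $\rg_Y$: identifying the correct bases $\s(\U)$ and $\rg(\U)$ (rather than working with the full $\Gr^0$) is what makes the functoriality lemma applicable and is what makes the extension genuinely a partial map rather than a total one. Once this setup is in place, all that remains is an assembly of the cited lemmas with no additional difficulty.
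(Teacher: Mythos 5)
Your proposal is correct and follows essentially the same route as the paper's proof: both identify the commuting square with horizontal arrow $\vartheta_Y(\U)$, vertical arrows $\rg_Y$, and bottom map $\s(\U)\congto\rg(\U)$, then apply \longref{Lemma}{sq}, \longref{Lemma}{extiso}, and \longref{Lemma}{lem:ident} in the same order, with uniqueness from density of $i_Y(Y)$ and Hausdorffness of $\rsc Y$. No substantive differences.
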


\begin{proof}
  The anchor map \(\rg_Y\colon Y\to\Gr^0\) is \(\Gr\)\nb-equivariant
  when we let~\(\Gr\) act on~\(\Gr^0\) in the usual way.  The
  slice~\(\U\) acts both on~\(Y\) and on~\(\Gr^0\), and the latter
  action is the composite homeomorphism
  \(\rg|_\U \circ (\s|_U)^{-1}\colon \s(\U) \congto \U \congto
  \rg(\U)\).  The naturality of the construction of~\(\vartheta\)
  shows that the following diagram commutes:
  \[
    \begin{tikzcd}
      \rg_Y^{-1} (\s(\U)) \ar[r,  "   \vartheta_Y (\U)", "   \cong"']
      \ar[d, "\rg_Y"']&
      \rg_Y^{-1} (\rg(\U))
      \ar[d, "\rg_Y"] \\
      \s(\U) \ar[r,  "\cong"', "\vartheta_{\Gr^0}(\U)"] & \rg(\U)
    \end{tikzcd}
  \]

  Now \longref{Lemma}{sq} with \(B_1 = \s(\U)\) and
  \(B_2 = \rg(\U)\) gives a map
  \[
    \widetilde{\vartheta_Y (\U)} \colon \rscc{\s(\U)} (\rg_Y^{-1}
    (\s(\U))) \to \rscc{\rg(\U)} (\rg_Y^{-1} (\rg(\U))).
  \]
  It is a homeomorphism by \longref{Lemma}{extiso}.
  \longref{Lemma}{lem:ident} identifies the domain and codomain of
  \(\widetilde{\vartheta_Y (\U)}\) with
  \((\rsc \rg_Y)^{-1} (\s(\U))\) and \((\rsc \rg_Y)^{-1} (\rg(\U))\)
  as spaces over \(\s(\U)\) and \(\rg(\U)\), respectively.  So we
  get a partial homeomorphism \(\vartheta_{\rscc{\Gr^0} Y} (\U)\) of
  \(\rscc{\Gr^0} Y\) that makes the following diagram commute:
  \begin{equation}
    \label{eq:vartheta_beta_U_diagram}
    \begin{tikzcd}[column sep=large]
      \rg_Y^{-1} (\s(\U)) \ar[r,  "   \vartheta_Y (\U)", "   \cong"']
      \ar[d, "i_Y"']&
      \rg_Y^{-1} (\rg(\U))
      \ar[d, "i_Y"] & Y\\
      \rg_{\rscc{\Gr^0} Y}^{-1} (\s(\U))
      \ar[r,  "\vartheta_{\rscc{\Gr^0} Y} (\U)", "\cong"']
      \ar[d, "\rg_{\rscc{\Gr^0}Y}"']&
      \rg_{\rscc{\Gr^0} Y}^{-1} (\rg(\U))
      \ar[d, "\rg_{\rscc{\Gr^0}Y}"] & \rsc Y\\
      \s(\U) \ar[r,  "\cong"', "\vartheta_{\Gr^0}(\U)"] & \rg(\U) & \Gr^0
    \end{tikzcd}
  \end{equation}
  The argument also shows
  \(\vartheta_{\rsc Y}(\U) \circ i_Y = i_Y \circ \vartheta_Y(\U)\)
  and that the image of \(\rg_Y^{-1}(\s(\U))\) in
  \((\rsc \rg_Y)^{-1} (\rg(\U))\) is dense.  Since the space
  \(\rsc Y\) is Hausdorff, this implies that the top
  square~\eqref{eq:vartheta_beta_U_diagram} determines the extension
  \(\vartheta_{\rscc{\Gr^0} Y} (\U)\) uniquely.
\end{proof}

To show that the \(\Gr\)\nb-action on~\(Y\) extends uniquely to a
\(\Gr\)\nb-action on~\(\rscc{\Gr^0} Y\), it would remain to prove
that the partial homeomorphisms \(\vartheta_{\rscc{\Gr^0} Y} (\U)\)
for slices \(\U\in\Bis(\Gr)\) satisfy the conditions in
\longref{Lemma}{lem:F-action_from_theta}.  We will prove this in the
more general case of diagram actions.

Before we continue to this more general case, we rewrite the
diagram~\eqref{eq:vartheta_beta_U_diagram} in a way useful for the
generalisation to \(F\)\nb-actions below.  We claim
that~\eqref{eq:vartheta_beta_U_diagram} commutes if and only if the
following diagram commutes, where the dashed arrows are partial
homeomorphisms and the usual arrows are globally defined continuous
maps:
\begin{equation}
  \label{eq:vartheta_beta_U_diagram_partial}
  \begin{tikzcd}[column sep=huge]
    Y \ar[r, dashed,  "\vartheta_Y(\U)"] \ar[d, "i_Y"']&
    Y \ar[r, dashed,  "\vartheta_Y(\U)^*"] \ar[d, "i_Y"']&
    Y \ar[d, "i_Y"]\\
    \rscc{\Gr^0} Y
    \ar[r, dashed,  "\vartheta_{\rscc{\Gr^0} Y} (\U)"]
    \ar[d, "\rg_{\rscc{\Gr^0}Y}"']&
    \rscc{\Gr^0} Y
    \ar[r, dashed,  "\vartheta_{\rscc{\Gr^0} Y} (\U)^*"]
    \ar[d, "\rg_{\rscc{\Gr^0}Y}"] &
    \rscc{\Gr^0} Y
    \ar[d, "\rg_{\rscc{\Gr^0}Y}"'] \\
    \Gr^0 \ar[r, dashed, "\vartheta_{\Gr^0}(\U)"] &
    \Gr^0 \ar[r, dashed, "\vartheta_{\Gr^0}(\U)^*"] &
    \Gr^0
  \end{tikzcd}
\end{equation}
A diagram of partial maps commutes if and only if any two parallel
partial maps in the diagram are equal, and this includes an equality
of their domains.  The domain of
\(\rg_{\rsc Y}\circ \vartheta_{\rsc Y}(\U)\) is equal to the domain
of \(\vartheta_{\Gr^0}(\U)\), whereas the domain of
\(\vartheta_{\Gr^0}(\U) \circ \rg_{\rsc Y}\) is
\(\rg_{\rsc Y}^{-1}(\s(\U))\) because \(\vartheta_{\Gr^0}(\U)\) has
domain~\(\s(\U)\).  Thus the bottom left square implies that
\(\vartheta_{\rsc Y}(\U)\) has the domain
\(\rg_{\rsc Y}^{-1}(\s(\U))\).  Similarly, the bottom right square
implies that \(\vartheta_{\rsc Y}(\U)^*\) has the domain
\(\rg_{\rsc Y}^{-1}(\rg(\U))\).  Equivalently,
\(\vartheta_{\rsc Y}(\U)\) has the image
\(\rg_{\rsc Y}^{-1}(\rg(\U))\).  In the top row, the domain and
image of \(\vartheta_Y(\U)\) must be \(\rg_Y^{-1}(\s(\U))\) and
\(\rg_Y^{-1}(\rg(\U))\) for the diagram to commute.  In addition,
the diagram commutes as a diagram of ordinary maps when we replace
each entry by the domain of the partial maps that start there.  This
gives exactly~\eqref{eq:vartheta_beta_U_diagram}.  So the
diagram~\eqref{eq:vartheta_beta_U_diagram_partial} encodes both the
commutativity of~\eqref{eq:vartheta_beta_U_diagram} and the domains
and images of the partial maps in that diagram.

Now let~\(\Cat\) be a category and let \((\Gr_x, \Bisp_g, \mu_{g,h})\)
describe a \(\Cat\)\nb-shaped diagram
\(F \colon \Cat \to \Grcat_{\lc,\prop}\).  That is, each~\(\Gr_x\)
for \(x\in\Cat^0\) is a locally compact, étale groupoid,
each~\(\Bisp_g\) for \(g\in \Cat(x,x')\) is a proper, locally
compact, étale groupoid correspondence
\(\Bisp_g \colon \Gr_{x'} \leftarrow \Gr_x\), and each~\(\mu_{g,h}\)
for \(g,h\in \Cat\) with \(\s(g) = \rg(h)\) is a homeomorphism
\(\mu_{g,h}\colon \Bisp_g \Grcomp_{\Gr_{\s(g)}} \Bisp_h \congto
\Bisp_{g h}\), subject to the conditions in
\longref{Proposition}{pro:diagrams_in_Grcat}.  Let~\(Y\) be a
topological space with an action of~\(F\).  The action contains a
disjoint union decomposition \(Y = \bigsqcup_{x\in\Cat^0} Y_x\) and
continuous maps \(\rg_x\colon Y_x \to \Gr_x^0\), which we assemble
into a single continuous map \(\rg\colon Y\to \Gr^0\) with
\(\Gr^0 \defeq \bigsqcup_{x\in\Cat^0} \Gr_x^0\).  This makes~\(Y\) a
space over~\(Y\) and allows us to define the Stone--\v{C}ech
compactification \(\rscc{\Gr^0}{Y}\) of~\(Y\) relative to~\(\Gr^0\).
We are going to extend the action of~\(F\) on~\(Y\) to an action
on~\(\rsc Y\).

The key is the description of \(F\)\nb-actions in
\longref{Lemma}{lem:F-action_from_theta}.  The space~\(\rsc Y\) comes
with a canonical map \(\rg_{\rsc Y}\colon \rsc Y \to \Gr^0\), which is
one piece of data assumed in
\longref{Lemma}{lem:F-action_from_theta}.  We are going to construct
partial homeomorphisms~\(\vartheta_{\rsc Y}(\U)\) for all
\(\U\in\Bis(F)\) and then check the conditions in
\longref{Lemma}{lem:F-action_from_theta}.  Before we start, we
notice that, by \longref{Lemma}{lem:rsc_disjoint_union},
\[
  \rsc Y = \bigsqcup_{x\in\Cat^0} \rscc{\Gr_x^0} Y_x.
\]

\begin{lemma}
  \label{lem:bisections_act_on_rsc}
  Let \(\U\in\Bis(\Bisp_g)\) for some \(x,x'\in \Cat^0\) and
  \(g\in\Cat(x,x')\).  There is a unique partial homeomorphism
  \(\vartheta_{\rsc Y}(\U)\) from \(\rscc{\Gr_x^0} Y_x\) to
  \(\rscc{\Gr_{x'}^0} Y_{x'}\) that makes the following diagram
  commute:
  \begin{equation}
    \label{eq:vartheta_beta_U_diagram_partial_corr}
    \begin{tikzcd}[column sep=huge]
      Y_x \ar[r, dashed,  "\vartheta_Y(\U)"] \ar[d, "i_{Y_x}"']&
      Y_{x'} \ar[r, dashed,  "\vartheta_Y(\U)^*"] \ar[d, "i_{Y_{x'}}"']&
      Y_x \ar[d, "i_{Y_x}"]\\
      \rscc{\Gr^0_x} Y_x
      \ar[r, dashed,  "\vartheta_{\rscc{\Gr^0} Y} (\U)"]
      \ar[d, "\rg_{\rscc{\Gr^0_x}Y_x}"']&
      \rscc{\Gr^0_{x'}} Y_{x'}
      \ar[r, dashed,  "\vartheta_{\rscc{\Gr^0} Y} (\U)^*"]
      \ar[d, "\pi"']
      \ar[dd, bend left, near end, "\rg_{\rscc{\Gr^0_{x'}}Y_{x'}}"] &
      \rscc{\Gr^0_x} Y_x
      \ar[d, "\rg_{\rscc{\Gr^0_x}Y_x}"] \\
      \Gr_x^0 \ar[r, dashed, "\U_*"] \ar[rd, dotted, "\U_\dagger"'] &
      \Bisp_g/\Gr_x \ar[r, dashed, "(\U_*)^*"] \ar[d, "\rg_*"'] &
      \Gr^0_x \\
      & \Gr_{x'}^0
    \end{tikzcd}
  \end{equation}
  Here continuous maps are drawn as usual arrows, partial
  homeomorphisms as dashed arrows, and one partial map is drawn as a
  dotted arrow.
\end{lemma}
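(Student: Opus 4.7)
The approach mimics Proposition \ref{extendgpaction}, with the slice action on the object space replaced by the partial action $\U_* \defeq \Qu|_\U \circ (\s|_\U)^{-1} \colon \s(\U) \xrightarrow{\cong} \Qu(\U) \subseteq \Bisp_g/\Gr_x$ of $\U$ on the orbit space. Set $X_1 \defeq \rg_x^{-1}(\s(\U)) \subseteq Y_x$ and $X_2 \defeq \vartheta_Y(\U)(X_1) \subseteq Y_{x'}$, so that $\vartheta_Y(\U)$ restricts to a homeomorphism $X_1 \xrightarrow{\cong} X_2$. Condition \ref{en:diagram_dynamical_system2}, applied within the single slice $\U$, guarantees that the formula $\gamma \cdot y \mapsto \Qu(\gamma)$ gives a well-defined continuous map $\pi_0 \colon X_2 \to \Qu(\U)$, and one checks the key equivariance $\pi_0 \circ \vartheta_Y(\U) = \U_* \circ \rg_x|_{X_1}$.

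View $X_1$ as a space over $\s(\U)$ via $\rg_x|_{X_1}$ and $X_2$ as a space over $\Qu(\U)$ via $\pi_0$. The equivariance square just noted fits the hypotheses of Lemma \ref{sq} with base change $\U_*$ (a homeomorphism, hence proper). Lemma \ref{extiso} then produces a homeomorphism $\widetilde{\vartheta_Y(\U)} \colon \rscc{\s(\U)} X_1 \xrightarrow{\cong} \rscc{\Qu(\U)} X_2$. Lemma \ref{lem:ident} identifies the left-hand side with the open subset $(\rg_{\rsc Y_x})^{-1}(\s(\U)) \subseteq \rscc{\Gr_x^0} Y_x$, which is precisely the intended domain of $\vartheta_{\rsc Y}(\U)$.

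The main obstacle is to identify $\rscc{\Qu(\U)} X_2$ with an open subset of $\rscc{\Gr_{x'}^0} Y_{x'}$, because $X_2$ is generally \emph{not} of the form $\rg_{x'}^{-1}(V)$ for any open $V \subseteq \Gr_{x'}^0$, so Lemma \ref{lem:ident} does not apply directly with base $\Gr_{x'}^0$. Here the properness hypothesis on the diagram becomes essential: $\rg_* \colon \Bisp_g/\Gr_x \to \Gr_{x'}^0$ is proper, so Lemma \ref{propercorr} gives $\rscc{\Bisp_g/\Gr_x}(X_2,\pi_0) \cong \rscc{\Gr_{x'}^0}(X_2, \rg_{x'}|_{X_2})$, and then Lemma \ref{lem:ident} applied with $V = \Qu(\U) \subseteq \Bisp_g/\Gr_x$ realises $\rscc{\Qu(\U)} X_2$ as the open subset $(\rg_{\rscc{\Bisp_g/\Gr_x} X_2})^{-1}(\Qu(\U))$ of $\rscc{\Bisp_g/\Gr_x} X_2$. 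The remaining delicate step is to transfer this identification inside $\rscc{\Gr_{x'}^0} Y_{x'}$ via the canonical map induced by the open inclusion $X_2 \hookrightarrow Y_{x'}$, and to check that the image is an open subspace; the global map $\pi$ appearing in the diagram is the corresponding extension of $\pi_0$, and the identity $\rg_* \circ \pi = \rg_{\rscc{\Gr_{x'}^0} Y_{x'}}$ is automatic on this image by construction.

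Uniqueness is immediate from Lemma \ref{lem:X_dense_in_beta}: any two partial homeomorphisms of $\rscc{\Gr_{x'}^0} Y_{x'}$ whose restrictions along $i_{Y_{x'}}$ both agree with $\vartheta_Y(\U)$ must coincide on their (Hausdorff) domains, because they agree on the dense image of $i_{Y_{x'}}$, while the requirement that the bottom squares of the diagram commute forces the domain and image.
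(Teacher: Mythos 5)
Your construction runs parallel to the paper's proof up to exactly the point you flag as ``the remaining delicate step'', and that step is a genuine gap rather than a routine verification: transferring $\rscc{\Qu(\U)} X_2$ into $\rscc{\Gr_{x'}^0} Y_{x'}$ ``via the canonical map induced by the open inclusion $X_2 \hookrightarrow Y_{x'}$'' does not work as stated. An open inclusion of $B$\nb-spaces does not in general induce an open embedding (or even an injection) of relative Stone--\v{C}ech compactifications --- for $B$ a point, compare $\beta(0,1)$ with $\beta[0,1]=[0,1]$ --- and the only open subsets for which such an identification holds are preimages of open subsets of the base, which is precisely the content of \longref{Lemma}{lem:ident}. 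You correctly observe that $X_2$ is not of the form $\rg_{x'}^{-1}(V)$ for $V\subseteq\Gr_{x'}^0$, but then you are left holding $\rscc{\Gr_{x'}^0} X_2$ (equivalently $\rscc{\Bisp_g/\Gr_x} X_2$) with no way to locate it inside $\rscc{\Gr_{x'}^0} Y_{x'}$. The same problem affects the arrow $\pi$ in~\eqref{eq:vartheta_beta_U_diagram_partial_corr}: the lemma asserts a globally defined continuous map $\pi$ on all of $\rscc{\Gr_{x'}^0} Y_{x'}$ with $\rg_*\circ\pi = \rg_{\rscc{\Gr^0_{x'}}Y_{x'}}$, and this cannot be produced from your $\pi_0$, which is only defined on $X_2$.

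The missing idea is to define the orbit map globally, not just on the image of $\vartheta_Y(\U)$. Since the action map $\alpha_g\colon \Bisp_g\times_{\s,\Gr_x^0,\rg} Y_x \to Y_{x'}$ is surjective, every point of~$Y_{x'}$ has the form $\gamma\cdot y$, and condition~\ref{en:diagram_dynamical_system2} shows that $\pi_Y(\gamma\cdot y)\defeq \Qu(\gamma)$ is a well defined continuous map $\pi_Y\colon Y_{x'}\to \Bisp_g/\Gr_x$ with $\rg_*\circ\pi_Y=\rg_Y$; your $\pi_0$ is its restriction to~$X_2$, and one checks that $X_2=\pi_Y^{-1}(\Qu(\U))$. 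This resolves everything at once: $Y_{x'}$ becomes a space over $\Bisp_g/\Gr_x$, which is locally compact Hausdorff and proper over~$\Gr_{x'}^0$ by the hypotheses on the diagram, so \longref{Lemma}{propercorr} identifies $\rscc{\Gr_{x'}^0} Y_{x'}\cong \rscc{\Bisp_g/\Gr_x} Y_{x'}$, \longref{Proposition}{betaXtoY} extends $\pi_Y$ to the required global map~$\pi$, and \longref{Lemma}{lem:ident}, applied with base $\Bisp_g/\Gr_x$ and $V=\Qu(\U)$, identifies $\rscc{\Qu(\U)}(\pi_Y^{-1}(\Qu(\U)))$ with the open subset $\pi^{-1}(\Qu(\U))\subseteq \rscc{\Gr_{x'}^0} Y_{x'}$. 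With this replacement, the remainder of your argument --- the appeal to Lemmas \ref{sq}, \ref{extiso} and \ref{lem:ident} on the domain side, and the density--Hausdorff uniqueness argument --- goes through and coincides with the paper's proof.
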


\begin{proof}
  We first recall how the arrows \(\U_*\), \(\rg_*\)
  and~\(\U_\dagger\)
  in~\eqref{eq:vartheta_beta_U_diagram_partial_corr} are defined and
  check that the triangle they form commutes.  Since~\(\U\) is a
  slice, \(\s|_{\U}\colon \U\congto \s(\U)\subseteq \Gr_x^0\) and
  \(\Qu|_{\U}\colon \U \congto \Qu(\U) \subseteq \Bisp_g/\Gr_x\) are
  homeomorphisms onto open subsets.  This yields the partial
  homeomomorphism
  \(\U_*\defeq\Qu|_{\U} \circ (\s|_{\U})^{-1}\colon \s(\U) \congto
  \Qu(\U)\).  The map \(\rg_*\colon \Bisp_g/\Gr_x \to \Gr_{x'}^0\)
  in~\eqref{eq:vartheta_beta_U_diagram_partial_corr} is induced by
  the anchor map \(\rg\colon \Bisp_g \to \Gr_{x'}^0\).  By
  definition,
  \(\U_\dagger \defeq \rg_*\circ \U_*\colon \s(\U) \to \Qu(\U) \to
  \rg(\U) \subseteq \Gr_{x'}^0\).

  The vertical maps in the first and third column of
  diagram~\eqref{eq:vartheta_beta_U_diagram_partial_corr} and the
  maps \(i_{Y_{x'}}\) and \(\rg_{\rscc{\Gr^0_{x'}}Y_{x'}}\) in the
  second column are part of the construction of the relative
  Stone--\v{C}ech compactification.  Next we construct a map
  \(\pi\colon \rscc{\Gr_{x'}^0} Y_{x'} \to \Bisp_g/\Gr_x\) with
  \(\rg_*\circ \pi = \rg_{\rscc{\Gr^0_{x'}}Y_{x'}}\).  There is a
  canonical map
  \(\pi_Y\colon Y_{x'}\congto \Bisp_g \Grcomp Y_x \to
  \Bisp_g/\Gr_x\) that maps \(\gamma\cdot y\) for
  \(\gamma\in\Bisp_g\), \(y\in Y_x\) with \(\s(\gamma) = \rg_Y(y)\)
  to the right \(\Gr_x\)\nb-orbit of~\(\gamma\); this is well
  defined because \(\gamma\cdot y = \gamma_2\cdot y_2\) implies
  \(\gamma_2 = \gamma\cdot \eta\) and \(y_2= \eta^{-1}\cdot y\) for some
  \(\eta\in \Gr\) with \(\rg(\eta) = \s(\gamma)\).  We compute
  \(\rg_* \circ \pi_Y= \rg_Y\colon Y_{x'} \to \Gr_{x'}^0\) because
  \(\rg_*\circ \pi_Y(\gamma\cdot y) = \rg(\gamma) =
  \rg_Y(\gamma\cdot y)\) for all \(\gamma\in\Bisp_g\), \(y\in Y_x\)
  with \(\s(\gamma) = \rg_Y(y)\).  So~\(\pi_Y\) is a map
  over~\(\Gr_{x'}^0\).  By assumption, the space \(\Bisp_g/\Gr_x\)
  is Hausdorff and the map
  \(\rg_*\colon \Bisp_g/\Gr_x \to \Gr_{x'}^0\) is proper.  By
  \longref{Theorem}{the:rscc_reflector}, \(\pi_Y\) factors uniquely
  through a proper, continuous map
  \(\pi\colon \rscc{\Gr^0} Y \to \Bisp_g/\Gr_x\)
  over~\(\Gr_{x'}^0\).  That this is a map over~\(\Gr_{x'}^0\) means
  that
  \(\rg_*\circ \pi = \rg_{\rscc{\Gr^0} Y}\colon \rscc{\Gr_{x'}^0}
  Y_{x'} \to \Gr_{x'}^0\).

  Next we recall the construction of the partial
  homeomorphism~\(\vartheta_Y(\U)\) from~\(Y_x\) to~\(Y_{x'}\) and
  prove that
  \begin{equation}
    \label{eq:slice_pi_equation}
    (\U_*)^* \circ \pi_Y=\rg_Y \circ \vartheta_Y(\U)^*.
  \end{equation}
  By construction,
  \(\vartheta_Y(\U)\) has the domain \(\rg_Y^{-1}(\s(\U))\) and is
  defined by \(\vartheta_Y(\U)(y) \defeq \gamma\cdot y\) if
  \(y\in\rg_Y^{-1}(\s(\U))\) and \(\gamma\in\U\) is the unique element
  with \(\s(\gamma) = \rg_Y(y)\).  As a consequence,
  \(\pi_Y (\vartheta_Y(\U)(y)) = \Qu(\gamma) = \U_*(\s(\gamma)) =
  \U_*(\rg_Y(y))\).  Since the partial maps
  \(\pi_Y \circ \vartheta_Y(\U)\) and \(\U_* \circ \rg_Y\) both have
  the domain \(\rg_Y^{-1}(\s(\U))\), we conclude that
  \(\pi_Y \circ \vartheta_Y(\U) = \U_* \circ \rg_Y\) as partial maps
  from~\(Y_x\) to \(\Bisp_g/\Gr_x\).  We claim that the partial maps
  \((\U_*)^* \circ \pi_Y\) and \(\rg_Y \circ \vartheta_Y(\U)^*\)
  from~\(Y_x\) to \(\Gr_x^0\) are equal as well.  The first of them
  has domain \(\pi_Y^{-1}(\Qu(\U))\) because the image of~\(\U_*\)
  is \(\Qu(\U)\), and the domain of the second one is the image of
  \(\vartheta_Y(\U)\).  Therefore, we must show that the image of
  \(\vartheta_Y(\U)\) is equal to
  \(\pi_Y^{-1}(\Qu(\U)) \subseteq Y_{x'}\).

  It is clear that~\(\pi_Y\) maps the image of~\(\vartheta_Y(\U)\)
  into~\(\Qu(\U)\).  Conversely, let
  \(z \in \pi_Y^{-1}(\Qu(\U)) \subseteq Y_{x'}\).  There are
  \(\gamma\in\Bisp_g\), \(y\in Y_{x'}\) with \(\s(\gamma) = \rg(y)\) and
  \(z = \gamma\cdot y\).  Then \(\pi_Y(z) \defeq \Qu(\gamma)\), and this
  belongs to~\(\Qu(\U)\) by assumption.  Therefore, there is a
  unique \(\eta\in \Gr\) with \(\s(\gamma) = \rg(\eta)\) and
  \(\gamma\cdot \eta \in \U\).  Then
  \(z=(\gamma \eta)\cdot (\eta^{-1} y)=\vartheta_Y(\U)(\eta^{-1}y)\).  So~\(z\)
  belongs to the image of~\(\vartheta_Y(\U)\).  In addition, we get
  \[
    \rg_Y(\vartheta_Y(\U)^*(z))
    = \rg_Y(\eta^{-1} y)
    = \rg(\eta^{-1})
    = \s(\eta)
    = \s(\gamma\cdot \eta)
    = (\U_*)^*\Qu(\gamma)
    = (\U_*)^*\pi_Y(z).
  \]
  This finishes the proof of~\eqref{eq:slice_pi_equation}.

  As in the proof of \longref{Proposition}{extendgpaction}, we now
  apply \longref{Lemma}{sq} and \longref{Lemma}{extiso} with
  \(B_1 = \s(\U)\) and \(B_2 = \Qu(\U)\) to get a unique
  homeomorphism
  \[
    \widetilde{\vartheta_Y (\U)} \colon
    \rscc{\s(\U)} (\rg_Y^{-1}(\s(\U)))
    \congto \rscc{\Qu(\U)} (\pi_Y^{-1} (\Qu(\U)))
  \]
  with \(i_Y \vartheta_Y (\U) = \widetilde{\vartheta_Y (\U)} i_Y\)
  on \(\rg_Y^{-1}(\s(\U)) \subseteq Y_x\).  Then
  \longref{Lemma}{lem:ident} identifies the domain and codomain of
  \(\widetilde{\vartheta_Y (\U)}\):
  \begin{align*}
    \rscc{\s(\U)} (\rg_Y^{-1}(\s(\U)))
    &\cong (\rsc \rg_Y)^{-1} (\s(\U)) \subseteq \rsc Y_x,\\
    \rscc{\Qu(\U)} (\pi_Y^{-1} (\Qu(\U)))
    &\cong \pi^{-1}(\Qu(\U)) \subseteq \rscc{\Bisp_g/\Gr_x} Y_{x'}.
  \end{align*}
  \longref{Lemma}{lem:ident} identifies
  \(\rscc{\Bisp_g/\Gr_x} Y_{x'}\) with the Stone--\v{C}ech
  compactification of~\(Y_{x'}\) relative to~\(\Gr_{x'}^0\) because
  \(\rg_*\colon \Bisp_g/\Gr_x \to \Gr_{x'}^0\) is proper.
  Composing~\(\widetilde{\vartheta_Y (\U)}\) with these
  homeomorphisms gives a partial homeomorphism
  \(\vartheta_{\rscc{\Gr^0} Y} (\U)\) of \(\rscc{\Gr^0} Y\) that
  makes the diagram~\eqref{eq:vartheta_beta_U_diagram_partial_corr}
  commute.  It is unique because the target space is Hausdorff
  and~\(i_Y\) maps \(\rg_Y^{-1}(\s(\U))\) to a dense subset of its
  domain, where the top left square
  in~\eqref{eq:vartheta_beta_U_diagram_partial_corr} determines
  \(\vartheta_{\rscc{\Gr^0} Y} (\U)\).
\end{proof}

\begin{theorem}
  \label{the:unique_extension_F-action}
  Let \(F\colon \Cat\to\Grcat_{\lc,\prop}\) be a diagram of proper,
  locally compact groupoid correspondences.  Let~\(Y\) be a
  topological space with an \(F\)\nb-action.  There is a unique
  \(F\)\nb-action on \(\rsc Y\) such that the canonical map
  \(i_Y\colon Y\to \rsc Y\) is \(F\)\nb-equivariant.
\end{theorem}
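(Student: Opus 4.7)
The plan is to construct the data required by \longref{Lemma}{lem:F-action_from_theta} on \(\rscc{\Gr^0} Y\) and then verify its axioms. I would take the anchor map \(\rg_{\rsc Y}\colon \rsc Y \to \Gr^0\) to be the assembly, over each summand of the decomposition \(\rsc Y = \bigsqcup_{x\in\Cat^0} \rscc{\Gr_x^0} Y_x\) of \longref{Lemma}{lem:rsc_disjoint_union}, of the canonical extensions furnished by \longref{Corollary}{2inj}. For each slice \(\U \in \Bis(F)\), \longref{Lemma}{lem:bisections_act_on_rsc} already supplies a partial homeomorphism \(\vartheta_{\rsc Y}(\U)\) of \(\rsc Y\) satisfying \(\vartheta_{\rsc Y}(\U)\circ i_Y = i_Y \circ \vartheta_Y(\U)\).

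Two of the four axioms of \longref{Lemma}{lem:F-action_from_theta} are essentially built into this construction. The anchor compatibility \(\rg_{\rsc Y}\circ \vartheta_{\rsc Y}(\U) = \U_\dagger \circ \rg_{\rsc Y}\) is read off from the outer rectangle of diagram~\eqref{eq:vartheta_beta_U_diagram_partial_corr}. For the covering axiom, the construction identifies the image of \(\vartheta_{\rsc Y}(\U)\) with \(\pi^{-1}(\Qu(\U))\); as \(\U\) ranges over \(\Bis(\Bisp_g)\), the subsets \(\Qu(\U)\) cover \(\Bisp_g/\Gr_x\), so their \(\pi\)\nb-preimages exhaust \(\rscc{\Gr^0_{x'}} Y_{x'}\), which is the piece of \(\rsc Y\) over \(\Gr_{x'}^0\).

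The real work lies in verifying the multiplicativity \(\vartheta_{\rsc Y}(\U\V) = \vartheta_{\rsc Y}(\U)\vartheta_{\rsc Y}(\V)\) and the adjoint identity \(\vartheta_{\rsc Y}(\U_1)^* \vartheta_{\rsc Y}(\U_2) = \vartheta_{\rsc Y}(\braket{\U_1}{\U_2})\). In each case, both sides are continuous partial maps of the Hausdorff space \(\rsc Y\). Precomposing with \(i_Y\) yields, by the extension property, the corresponding identity of partial homeomorphisms on \(Y\), which holds by hypothesis. Using the anchor compatibility already verified, I would then show that the two sides have equal domain, namely \(\rg_{\rsc Y}^{-1}\) of an explicit subset of \(\Gr^0\). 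Since \(i_Y(Y)\) is dense in \(\rsc Y\) by \longref{Lemma}{lem:X_dense_in_beta} and the target is Hausdorff, two continuous partial maps with the same open domain that agree on the dense subset \(i_Y(Y)\) of that domain must coincide everywhere on it. The hard part here is the careful bookkeeping of these domains, which however reduces to routine manipulations with the identifications of subsets of \(\rsc Y\) with relative Stone--\v{C}ech compactifications of preimages supplied by \longref{Lemma}{lem:ident}.

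Uniqueness is then immediate from \longref{Lemma}{lem:theta_gives_equivariant}: any \(F\)\nb-action on \(\rsc Y\) making \(i_Y\) equivariant must have anchor map \(\rg_{\rsc Y}\) by the universal property in \longref{Corollary}{2inj}, and its slice partial homeomorphisms must agree with \(\vartheta_{\rsc Y}(\U)\) after precomposition with \(i_Y\); density and Hausdorffness then force full equality on the shared domain.
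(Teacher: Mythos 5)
Your proposal is correct and follows essentially the same route as the paper: the same anchor map, the partial homeomorphisms from \longref{Lemma}{lem:bisections_act_on_rsc}, verification of the covering and anchor-compatibility axioms directly from diagram~\eqref{eq:vartheta_beta_U_diagram_partial_corr}, and the multiplicativity and adjoint identities by matching domains via the anchor map and then invoking density of \(i_Y(Y)\) together with Hausdorffness of \(\rsc Y\). The uniqueness argument is likewise the one the paper gives.
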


\begin{proof}
  The Stone--\v{C}ech compactification relative to
  \(\Gr^0 \defeq \bigsqcup_{x\in\Cat^0} \Gr_x^0\) is well defined
  because \(\bigsqcup_{x \in \Cat^0} \Gr_x^0\) is locally compact
  and Hausdorff.  There is a canonical map
  \(\rsc r\colon \rsc Y \to \Gr^0\).  It is the unique map with
  \(\rsc r\circ i_Y = r\colon Y\to\Gr^0\).  Hence this is the only
  choice for an anchor map if we want~\(i\) to be
  \(F\)\nb-equivariant.  \longref{Lemma}{lem:bisections_act_on_rsc}
  provides partial homeomorphisms \(\vartheta_{\rsc Y}(\U)\) of
  \(\rsc Y\) for all slices \(\U\in\Bis(F)\).  We claim that these
  satisfy the conditions in
  \longref{Lemma}{lem:F-action_from_theta}.

  We first check~\ref{en:F-action_from_theta1}.  Let
  \(\U\in\Bis(\Bisp_g)\), \(\V\in\Bis(\Bisp_h)\) for
  \(g\in\Cat(x,x')\), \(h\in \Cat(x'',x)\) for
  \(x,x',x''\in\Cat^0\).  The diagram
  in~\eqref{eq:vartheta_beta_U_diagram_partial_corr} describes the
  domain and the codomain of the maps \(\vartheta_{\rsc Y}(\U)\) as
  the preimages of \(\s(\U)\) and~\(\Qu(\U)\), respectively.  The
  domain of \(\vartheta_{\rsc Y}(\U)\vartheta_{\rsc Y}(\V)\) is the
  set of \(y\in Y_{x''}\) with \(\rg_{\rsc Y}(y) \in \s(\V)\) and
  \(\vartheta_{\rsc Y}(\V)(y) \in \rg_{\rsc Y}^{-1}(\s(\U))\).
  Since
  \(\rg_{\rsc Y}\circ \vartheta_{\rsc Y}(\V) = \V_*\circ \rg_{\rsc
    Y}\), the second condition on~\(y\) is equivalent to
  \(\V_*(\rg_{\rsc Y}(y)) \in \s(\U)\).  As a consequence,
  \(\vartheta_{\rsc Y}(\U)\vartheta_{\rsc Y}(\V)\) and
  \(\vartheta_{\rsc Y}(\U \V)\) have the same domain.  The diagram
  in~\eqref{eq:vartheta_beta_U_diagram_partial_corr} also implies
  \(\vartheta_{\rsc Y}(\U)\vartheta_{\rsc Y}(\V) i_Y =
  \vartheta_{\rsc Y}(\U \V) i_Y\).  Since the target space
  \(\rsc Y\) of \(\vartheta_{\rsc Y}(\U)\vartheta_{\rsc Y}(\V)\) and
  \(\vartheta_{\rsc Y}(\U \V)\) is Hausdorff and
  \(i_Y(\rg_Y^{-1}(\s(\U\V)))\) is dense in the domain
  \(\rg_{\rsc Y}^{-1}(\s(\U\V))\) of our two partial maps, we get
  \(\vartheta_{\rsc Y}(\U)\vartheta_{\rsc Y}(\V) = \vartheta_{\rsc
    Y}(\U \V)\).

  The proof of~\ref{en:F-action_from_theta2} is similar, using also
  the right half of~\eqref{eq:vartheta_beta_U_diagram_partial_corr}.
  To prove \longref{condition}{en:F-action_from_theta3}, we use that
  the range of~\(\vartheta_{\rsc Y}(\U)\) is \(\pi^{-1}(\Qu(\U))\).
  These open subsets for slices~\(\U\) of~\(\Bisp_g\)
  cover~\(\rsc Y_{x'}\) because the open subsets
  \(\Qu(\U) \subseteq \Bisp_g/\Gr_x\) for slices~\(\U\)
  cover~\(\Bisp_g/\Gr_x\).  Finally,
  \longref{condition}{en:F-action_from_theta5} is already contained
  in~\eqref{eq:vartheta_beta_U_diagram_partial_corr}.

  Now \longref{Lemma}{lem:F-action_from_theta} shows that the
  map~\(\rg_{\rsc Y}\) and the partial homeomorphisms
  \(\vartheta_{\rsc Y}(\U)\) give a unique \(F\)\nb-action
  on~\(\rsc Y\).  By
  \longref{Lemma}{lem:theta_gives_equivariant}, the top
  part of~\eqref{eq:vartheta_beta_U_diagram_partial_corr} says that
  the map~\(i_Y\) is \(F\)\nb-equivariant.  In addition, since this
  determines the partial homeomorphisms \(\vartheta_{\rsc Y}(\U)\)
  uniquely, the \(F\)\nb-action on~\(\rsc Y\) is unique as asserted.
\end{proof}

\section{Locally compact groupoid models for proper diagrams}
\label{sec:proper_model}

In this subsection, we prove the main result of this article,
namely, that the universal action of a diagram of proper, locally
compact groupoid correspondences takes place on a Hausdorff proper
\(\Gr^0\)-space~\(\Omega\).  Since~\(\Gr^0\) is Hausdorff, locally compact, it
follows that~\(\Omega\) is Hausdorff, locally compact.  The key point is the
following proposition:

\begin{proposition}
  \label{pro:F-action_reflector}
  Let \(F\colon \Cat\to\Grcat_{\lc,\prop}\) be a diagram of proper,
  locally compact groupoid correspondences.  The full subcategory of
  \(F\)\nb-actions on Hausdorff proper \(\Gr^0\)\nb-spaces is a
  reflective subcategory of the category of all \(F\)\nb-actions.  The
  left adjoint to the inclusion maps an \(F\)\nb-action on a
  space~\(Y\) to the induced \(F\)\nb-action on the Stone--\v{C}ech
  compactification of~\(Y\) relative to
  \(\Gr^0 \defeq \bigsqcup_{x \in \Cat^0} \Gr_x^0\).
\end{proposition}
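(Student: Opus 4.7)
The plan is to combine \longref{Theorem}{the:rscc_reflector} (the relative Stone--Čech compactification is the reflector at the level of underlying spaces over~$\Gr^0$) with \longref{Theorem}{the:unique_extension_F-action} (an $F$\nb-action extends uniquely to $\rsc Y$), and then check the universal property of the reflector in the category of $F$\nb-actions by a density argument.

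First, I would define the candidate for the reflector on objects: given an $F$\nb-action $Y$, produce the $F$\nb-action on $\rsc Y$ supplied by \longref{Theorem}{the:unique_extension_F-action}. The space $\rsc Y$ is Hausdorff by construction and $\rscc{\Gr^0}\rg\colon \rsc Y\to \Gr^0$ is proper by \longref{Corollary}{2inj}, so this $F$\nb-action lives in the designated subcategory. The unit of the adjunction will be the canonical $F$\nb-equivariant map $i_Y\colon Y\to\rsc Y$.

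Next, I would verify the universal property. Let $Z$ be an $F$\nb-action whose underlying $\Gr^0$\nb-space is Hausdorff and proper, and let $f\colon Y\to Z$ be $F$\nb-equivariant. At the level of $\gps{\Gr^0}$, \longref{Theorem}{the:rscc_reflector} produces a unique $\Gr^0$\nb-map $\bar f\colon \rsc Y\to Z$ with $\bar f\circ i_Y=f$. It remains to check that $\bar f$ is $F$\nb-equivariant; by \longref{Lemma}{lem:theta_gives_equivariant}, this reduces to two conditions. Compatibility with the anchor map, $\rg_Z\circ \bar f=\rg_{\rsc Y}$, is exactly the statement that $\bar f$ is a $\Gr^0$\nb-map. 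For each slice $\U\in\Bis(F)$ I have to show
\[
\vartheta_Z(\U)\circ \bar f = \bar f\circ\vartheta_{\rsc Y}(\U)
\]
as partial maps. Both sides are continuous maps defined on the open set $\rg_{\rsc Y}^{-1}(\s(\U))\subseteq \rsc Y$ and target the Hausdorff space $Z$, so it suffices to check the equality on a dense subset. By \longref{Lemma}{lem:ident}, $\rg_{\rsc Y}^{-1}(\s(\U))\cong \rscc{\s(\U)}(\rg_Y^{-1}(\s(\U)))$, and by \longref{Lemma}{lem:X_dense_in_beta} the image of $\rg_Y^{-1}(\s(\U))$ under $i_Y$ is dense in this open subset. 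On this dense subset, $F$\nb-equivariance of $f$ and of $i_Y$ together with the top row of~\eqref{eq:vartheta_beta_U_diagram_partial_corr} give the desired identity. Uniqueness of $\bar f$ as a continuous $\Gr^0$\nb-map already gives uniqueness among $F$\nb-equivariant maps.

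The main delicate point is the density/Hausdorffness argument for $F$\nb-equivariance: one must identify the domain of $\vartheta_{\rsc Y}(\U)$ with the correct relative Stone--Čech compactification via \longref{Lemma}{lem:ident} in order to know that $i_Y(\rg_Y^{-1}(\s(\U)))$ is dense there; the other verifications are formal. Once the universal property is established on objects and morphisms, functoriality of $\rsc$ on the $F$\nb-action category and the adjunction $\rsc\dashv \iota$ follow by the usual abstract nonsense, so the subcategory is reflective with reflector $\rsc$, as claimed.
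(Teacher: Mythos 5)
Your proposal is correct and follows essentially the same route as the paper's proof: extend the action via \longref{Theorem}{the:unique_extension_F-action}, obtain the unique $\Gr^0$\nb-map from \longref{Theorem}{the:rscc_reflector}/\longref{Proposition}{betaXtoY}, match the domains of the two partial maps using that $\bar f$ is a $\Gr^0$\nb-map, and conclude $F$\nb-equivariance by density (\longref{Lemma}{lem:ident}, \longref{Lemma}{lem:X_dense_in_beta}) and Hausdorffness of the target.
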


\begin{proof}
  \longref{Theorem}{the:rscc_reflector} says that the full
  subcategory of Hausdorff proper \(\Gr^0\)\nb-spaces is a reflective
  subcategory of the category of all \(\Gr^0\)\nb-spaces, with the
  relative Stone--\v{C}ech compactification~\(\rsc\) as the left
  adjoint functor of the inclusion.

  Let \(Y\) and~\(Y'\) be topological spaces with an action of~\(F\)
  and let \(\varphi\colon Y\to Y'\) be an \(F\)\nb-equivariant map.
  Assume that~\(Y'\) is Hausdorff and that its anchor map
  \(\rg'\colon Y' \to \Gr^0\) is proper.  By
  \longref{Theorem}{the:unique_extension_F-action}, there is a
  unique \(F\)\nb-action on the relative Stone--\v{C}ech
  compactification \(\rsc Y\) that makes the inclusion map
  \(i_Y\colon Y\to \rsc Y\) \(F\)\nb-equivariant.  By
  \longref{Proposition}{betaXtoY}, there is a unique
  \(\Gr^0\)\nb-map \(\tilde{\varphi}\colon \rsc Y \to Y'\) with
  \(\tilde{\varphi} i_Y = \varphi\).  Any \(F\)\nb-equivariant map
  is also a \(\Gr^0\)-map by
  \longref{Lemma}{lem:theta_gives_equivariant}.  Therefore,
  \(\tilde{\varphi}\) is the only map \(\rsc Y\to Y'\) with
  \(\tilde{\varphi} i_Y = \varphi\) that has a chance to be
  \(F\)\nb-equivariant.  To complete the proof, we must show that
  \(\tilde{\varphi}\) is \(F\)\nb-equivariant.  We describe an
  \(F\)\nb-action on a space~\(Y\) as in
  \longref{Lemma}{lem:F-action_from_theta} through a continuous map
  \(\rg_Y\colon Y\to \Gr^0\) and partial homeomorphisms
  \(\vartheta_Y(\U)\) for all slices \(\U\in\Bis(F)\), subject to
  the conditions
  \ref{en:F-action_from_theta1}--\ref{en:F-action_from_theta5}.  By
  \longref{Lemma}{lem:theta_gives_equivariant}, it remains to prove
  that the partial maps \(\vartheta_{Y'}(\U)\circ \tilde{\varphi}\)
  and \(\tilde{\varphi} \circ \vartheta_{\rsc Y}(\U)\) agree for any
  slice \(\U\in\Bis(F)\).  We pick~\(\U\).  Then~\(\U\) is a slice
  in \(\Bisp_g\) for some \(x,x'\in\Cat^0\) and \(g \in\Cat(x,x')\).

  First, we check that our two partial maps have the same domain.
  Since~\(\tilde{\varphi}\) is a globally defined map, the domain of
  \(\tilde{\varphi} \circ \vartheta_{\rsc Y}(\U)\) is the domain of
  \(\vartheta_{\rsc Y}(\U)\) and the domain
  of~\(\vartheta_{Y'}(\U)\circ \tilde{\varphi}\) is the
  \(\tilde{\varphi}\)\nb-preimage of the domain
  of~\(\vartheta_{Y'}(\U)\).  The domains of
  \(\vartheta_{\rsc Y}(\U)\) and~\(\vartheta_{Y'}(\U)\) are
  \(\rg_{\rsc Y_x}^{-1}(\s(\U))\) and \(\rg_{Y'_x}^{-1}(\s(\U))\),
  respectively.  Since~\(\tilde{\varphi}\) is a \(\Gr^0\)\nb-map,
  the domain of~\(\vartheta_{Y'}(\U)\circ \tilde{\varphi}\) is also
  equal to the \(\rg_{\rsc Y_x}^{-1}(\s(\U))\).  This proves the
  claim that both partial maps have the same domain.

  Since \(i_Y\) and~\(\varphi\) are \(F\)\nb-equivariant, we know
  that
  \(\vartheta_{\rsc Y}(\U) \circ i_Y = i_Y \circ \vartheta_Y(\U)\)
  and
  \(\vartheta_{Y'}(\U) \circ \varphi = \varphi \circ
  \vartheta_Y(\U)\).  Together with \(\tilde{\varphi} \circ i_Y =
  \varphi\), this implies
  \[
    (\vartheta_{Y'}(\U) \circ \tilde{\varphi}) \circ i_Y
    = \vartheta_{Y'}(\U) \circ \varphi
    = \varphi \circ \vartheta_Y(\U)
    = \tilde{\varphi} \circ i_Y  \circ \vartheta_Y(\U)
    = (\tilde{\varphi} \circ \vartheta_{\rsc Y}(\U)) \circ i_Y.
  \]
  These partial maps have domain \(\rg_{Y_x}^{-1}(\s(\U))\).  The
  \(i_Y\)\nb-image of this is dense in
  \(\rg_{\rsc Y_x}^{-1}(\s(\U))\) because of
  \longref{Lemma}{lem:ident} and
  \longref{Lemma}{lem:X_dense_in_beta}.  Since the target~\(Y'\) of
  \(\vartheta_{Y'}(\U)\circ \tilde{\varphi}\) and
  \(\tilde{\varphi} \circ \vartheta_{\rsc Y}(\U)\) is Hausdorff and
  these maps agree on a dense subset, we get
  \(\vartheta_{Y'}(\U)\circ \tilde{\varphi}= \tilde{\varphi} \circ
  \vartheta_{\rsc Y}(\U)\) as needed.
\end{proof}

\begin{proposition}[\cite{Riehl:Categories_context}*{Corollary~5.6.6}]
  \label{riehl}
  The inclusion of a reflective full subcategory
  \(\Cat[D] \hookrightarrow \Cat\) creates all limits that~\(\Cat\)
  admits.  As a consequence, if a diagram in~\(\Cat[D]\) has a limit
  in~\(\Cat\), then it also has a limit in~\(\Cat[D]\), which is
  isomorphic to the limit in~\(\Cat\).
\end{proposition}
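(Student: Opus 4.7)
The plan is to exploit the characterisation of a reflective subcategory as a full subcategory whose inclusion \(I\colon \Cat[D]\hookrightarrow \Cat\) has a left adjoint \(L\colon \Cat\to\Cat[D]\) (the reflector), and for which the counit \(\varepsilon\colon LI \Rightarrow \id_{\Cat[D]}\) is a natural isomorphism. The latter is equivalent to \(I\) being fully faithful, which holds since \(\Cat[D]\) is a full subcategory. The key observations are (a) right adjoints preserve limits, so \(I\) preserves any limit that already exists in \(\Cat[D]\), and (b) we can transport a limit from \(\Cat\) into \(\Cat[D]\) by applying \(L\).

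Concretely, given a small diagram \(F\colon J\to\Cat[D]\) such that the composite \(IF\colon J\to\Cat\) admits a limit \((\ell,\pi_j)\) in~\(\Cat\), I propose to exhibit \(L\ell\) as the limit of~\(F\) in~\(\Cat[D]\), with cone maps \(\tilde\pi_j \defeq \varepsilon_{F(j)}\circ L(\pi_j)\colon L\ell\to F(j)\). That this is a cone on~\(F\) follows from the functoriality of~\(L\) and the naturality of~\(\varepsilon\).

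The main verification is universality in~\(\Cat[D]\). Given any other cone \((f_j\colon d\to F(j))\) in~\(\Cat[D]\), I apply~\(I\) to obtain a cone \((If_j\colon Id\to IF(j))\) on~\(IF\) in~\(\Cat\). Universality of~\(\ell\) yields a unique \(h\colon Id\to \ell\) with \(\pi_j\circ h = If_j\). The desired factorisation \(d\to L\ell\) is then \(L(h)\circ \varepsilon_d^{-1}\); a short diagram chase using naturality of~\(\varepsilon\) confirms that it composes with the \(\tilde\pi_j\) to produce the \(f_j\). Uniqueness of this factorisation reduces, via the faithfulness of~\(I\), to the already-established uniqueness of~\(h\). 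Applying \(I\) to the limit cone \((L\ell,\tilde\pi_j)\) then recovers a limit cone on~\(IF\), necessarily isomorphic to \((\ell,\pi_j)\), which proves the second assertion.

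The bookkeeping of units, counits, and natural transformations is the only mildly delicate point, but it is entirely standard and does not use any specific feature of our setting. Since this is a classical fact about reflective subcategories, I would in the end simply cite \cite{Riehl:Categories_context}*{Corollary~5.6.6} rather than reproduce the chase in detail.
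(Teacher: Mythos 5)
The paper gives no argument for this proposition at all --- it is quoted from Riehl with a citation and used as a black box --- so there is no in-paper proof to compare against; you are supplying one. Your construction is the standard one, and the existence half of the universal property of \((L\ell,\tilde\pi_j)\) checks out exactly as you say: the computation \(\tilde\pi_j\circ L(h)\circ\varepsilon_d^{-1}=f_j\) follows from naturality of~\(\varepsilon\).

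The uniqueness step, however, has a real gap as described. Faithfulness of~\(I\) lets you conclude \(k=k'\) from \(Ik=Ik'\), but to deduce \(Ik=Ik'\) from \(\tilde\pi_j\circ k=\tilde\pi_j\circ k'\) you need the legs \(I\tilde\pi_j\colon IL\ell\to IF(j)\) to be jointly monic, and your proposed reduction to the uniqueness of~\(h\) requires transporting a morphism \(d\to L\ell\) back to a cone morphism \(Id\to\ell\); both require a comparison map \(u\colon IL\ell\to\ell\) \emph{and} its invertibility, neither of which appears in the sketch. (Maps \emph{into} \(L\ell\) carry no universal property from the adjunction, so this cannot be waved away.) The missing ingredient --- which is really the content of the word \emph{creates} --- is that the unit \(\eta_\ell\colon\ell\to IL\ell\) is an isomorphism, i.e.\ the limit computed in~\(\Cat\) already lies in~\(\Cat[D]\) up to isomorphism. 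One gets it by observing that \(\bigl(I\varepsilon_{F(j)}\circ IL(\pi_j)\bigr)_j\) is a cone on \(IF\) with apex \(IL\ell\) (using that \(\varepsilon\) is invertible because \(\Cat[D]\) is full), obtaining \(u\) with \(\pi_j\circ u=I\varepsilon_{F(j)}\circ IL(\pi_j)\), and then checking \(u\circ\eta_\ell=\id_\ell\) from the joint monicity of \((\pi_j)\) plus naturality of~\(\eta\) and the triangle identity, and \(\eta_\ell\circ u=\id_{IL\ell}\) from the universal property of the unit. Once this is in place, \(I\tilde\pi_j=\pi_j\circ u\) shows that \((IL\ell,I\tilde\pi_j)\) is a limit cone in~\(\Cat\) isomorphic to \((\ell,\pi_j)\), and fullness of \(\Cat[D]\) then yields both existence and uniqueness in~\(\Cat[D]\) in one stroke. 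So your plan is repairable by one standard extra lemma, but as written the uniqueness argument does not close; since the paper itself only cites \cite{Riehl:Categories_context}*{Corollary~5.6.6}, doing the same is the safest course.
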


\begin{theorem}
  \label{the:proper_Hausdorff_Omega}
  Let \(F\colon \Cat\to\Grcat_{\lc,\prop}\) be a diagram of proper,
  locally compact groupoid correspondences.  Then the universal
  \(F\)\nb-action takes place on a space~\(\Omega\) that is
  Hausdorff, locally compact and proper over
  \(\Gr^0 \defeq \bigsqcup_{x\in\Cat^0} \Gr_x^0\).  The groupoid
  model of~\(F\) is a locally compact groupoid.
\end{theorem}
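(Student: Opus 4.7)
The plan is to combine the existence of the universal $F$-action from \longref{Theorem}{the:groupoid_model_universal_action_exists} with the reflection result \longref{Proposition}{pro:F-action_reflector} via the abstract nonsense of \longref{Proposition}{riehl}.

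First, I would invoke \longref{Theorem}{the:groupoid_model_universal_action_exists} to get that a universal $F$-action exists; call its underlying space $\Omega$. Equivalently, $\Omega$ is a terminal object in the category of all $F$-actions. Next, by \longref{Proposition}{pro:F-action_reflector}, the full subcategory of $F$-actions on Hausdorff proper $\Gr^0$-spaces is reflective inside the category of all $F$-actions, with reflector given by the relative Stone--\v{C}ech compactification equipped with the extended action from \longref{Theorem}{the:unique_extension_F-action}.

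Now I would apply \longref{Proposition}{riehl}: since reflective inclusions create all limits that exist in the ambient category, and a terminal object is simply the limit of the empty diagram (which trivially factors through the subcategory), the terminal $F$-action $\Omega$ of the large category must already lie in the reflective subcategory. Concretely, this means $\Omega$ is Hausdorff and the anchor map $\rg_\Omega\colon \Omega \to \Gr^0$ is proper. One can also see this directly without invoking \longref{Proposition}{riehl}: the canonical $F$-equivariant map $i_\Omega\colon \Omega \to \rscc{\Gr^0}\Omega$ has an $F$-equivariant inverse because $\Omega$ is terminal, and the two composites are forced to be identities (one by terminality, the other by the uniqueness clause of the reflection, using that $i_\Omega(\Omega)$ is dense and $\rscc{\Gr^0}\Omega$ is Hausdorff via \longref{Lemma}{lem:X_dense_in_beta}).

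Since $\Gr^0 = \bigsqcup_{x\in\Cat^0} \Gr_x^0$ is Hausdorff and locally compact (because $F$ is locally compact), \longref{Remark}{rem:proper_is_locally_compact} then shows that $\Omega$ is Hausdorff and locally compact. Finally, the object space of the groupoid model for $F$-actions is precisely $\Omega$ (by \cite{Meyer:Diagrams_models}*{Proposition~5.12}), so the groupoid model is locally compact in the sense of this paper. No step of this plan presents a genuine obstacle; the only point that requires a little care is verifying that the terminal object interpretation lets us apply \longref{Proposition}{riehl} to the empty diagram, which is essentially formal.
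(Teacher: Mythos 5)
Your proposal is correct and follows essentially the same route as the paper, which in fact gives both of your arguments: the abstract one via \longref{Proposition}{pro:F-action_reflector} and \longref{Proposition}{riehl} applied to the terminal object as the limit of the empty diagram, and the direct one showing \(\Omega\cong\rscc{\Gr^0}\Omega\) using terminality, density of the image of \(i_\Omega\), and Hausdorffness of \(\rscc{\Gr^0}\Omega\). No gaps.
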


\begin{proof}
  We give two proofs.  First, a universal \(F\)\nb-action is the
  same as a terminal object in the category of \(F\)\nb-actions, and
  this is an example of a limit, namely, of the empty diagram.
  \longref{Theorem}{the:groupoid_model_universal_action_exists} says
  that a terminal object exists in the category of all
  \(F\)\nb-actions.  \longref{Proposition}{pro:F-action_reflector}
  and \longref{Proposition}{riehl} imply that this terminal object
  is isomorphic to an object in the subcategory of Hausdorff proper
  \(\Gr\)\nb-spaces.  Actually, our subcategory is closed under
  isomorphism, and so the terminal object belongs to it.  By
  \longref{Remark}{rem:proper_is_locally_compact}, this implies that
  its underlying space is locally compact.

  The second proof is more explicit.  Let~\(\Omega\) be the
  universal \(F\)\nb-action.  The relative Stone--\v{C}ech
  compactification comes with a canonical \(F\)\nb-equivariant map
  \(\iota \colon \Omega \hookrightarrow \rsc \Omega\); here we use
  the canonical \(F\)\nb-action on~\(\rsc \Omega\).
  Since~\(\Omega\) is universal, there is a canonical map
  \(\rsc \Omega \to \Omega\) as well.  The composite map
  \(\Omega \to \rsc \Omega \to \Omega\) is the identity map
  because~\(\Omega\) is terminal.  The composite map
  \(\rsc \Omega \to \Omega \to \rsc \Omega\) and the identity map
  have the same composite with~\(\iota\).  Since the range
  of~\(\iota\) is dense by \longref{Lemma}{lem:X_dense_in_beta}
  and~\(\rsc \Omega\) is Hausdorff, it follows that the composite
  map \(\rsc \Omega \to \Omega \to \rsc \Omega\) is equal to the
  identity map as well.  So \(\Omega \cong \rsc \Omega\), and this
  means that~\(\Omega\) is Hausdorff and proper over~\(\Gr^0\).
\end{proof}

\begin{corollary}
  \label{cor:Omega_compact}
  Let \(F\colon \Cat\to\Grcat_{\lc,\prop}\) be a diagram of proper,
  locally compact groupoid correspondences.  Assume that~\(\Cat^0\)
  is finite and that each object space~\(\Gr_x^0\) in the diagram is
  compact.  Then the universal \(F\)\nb-action takes place on a
  compact Hausdorff space.  The groupoid model of~\(F\) is a locally
  compact groupoid with compact object space.
\end{corollary}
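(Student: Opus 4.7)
The plan is to derive the corollary directly from \longref{Theorem}{the:proper_Hausdorff_Omega} by exploiting the fact that under the extra hypotheses, the base space $\Gr^0$ becomes compact. First I would observe that, since $\Cat^0$ is finite and each $\Gr_x^0$ is compact Hausdorff, the disjoint union $\Gr^0 = \bigsqcup_{x \in \Cat^0} \Gr_x^0$ is again compact Hausdorff, being a finite coproduct of compact Hausdorff spaces.

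Next, by \longref{Theorem}{the:proper_Hausdorff_Omega}, the universal $F$-action lives on a Hausdorff, locally compact space $\Omega$ whose anchor map $\rg \colon \Omega \to \Gr^0$ is proper. I would then invoke the second half of \longref{Proposition}{pro:proper_map}: because $\Omega$ is Hausdorff and $\Gr^0$ is Hausdorff and locally compact, properness of $\rg$ is equivalent to preimages of compact subsets of $\Gr^0$ being compact. Applying this to $\Gr^0 \subseteq \Gr^0$ itself gives that $\Omega = \rg^{-1}(\Gr^0)$ is compact, proving the first claim.

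For the second statement, I would recall from \longref{Definition}{def:universal_action} that the object space of the groupoid model of $F$ is precisely the underlying space of the universal $F$-action (a $\Gr[U]$-action on $Y$ carries a canonical anchor map $Y \to \Gr[U]^0$, and the universal such action is the canonical one on $\Gr[U]^0$). Hence the object space is $\Omega$, which we have just shown is compact. That the groupoid model is locally compact is already part of the conclusion of \longref{Theorem}{the:proper_Hausdorff_Omega}, so nothing further is required. There is no substantive obstacle here beyond correctly assembling these ingredients; the work has all been done in the preceding theorem, and the role of the additional hypothesis is only to upgrade the properness of the anchor map to compactness of its domain.
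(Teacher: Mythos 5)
Your proof is correct and follows essentially the same route as the paper: the extra hypotheses make \(\Gr^0\) compact, and then \longref{Theorem}{the:proper_Hausdorff_Omega} together with the characterisation of proper maps in \longref{Proposition}{pro:proper_map} shows that the Hausdorff space \(\Omega\), being proper over the compact space \(\Gr^0\), is itself compact. The paper's proof is just a terser version of this same argument.
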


\begin{proof}
  Our extra assumptions compared to
  \longref{Theorem}{the:proper_Hausdorff_Omega} say that~\(\Gr^0\)
  is compact.  Then Hausdorff spaces that are proper over~\(\Gr^0\)
  are compact.
\end{proof}

\begin{example}
  \label{exa:nm-dynamical_system}
  The \((m,n)\)-dynamical systems of Ara, Exel and
  Katsura~\cite{Ara-Exel-Katsura:Dynamical_systems} are described in
  \cite{Meyer:Diagrams_models}*{Section~4.4} as actions of a certain
  diagram of proper groupoid correspondences.  The diagram is an
  equaliser diagram of the form \(\Gr \rightrightarrows \Gr\),
  where~\(\Gr\) is the one-arrow one-object groupoid.  A proper
  groupoid correspondence \(\Gr \to \Gr\) is just a finite set, and
  it is determined up to isomorphism by its cardinality.  We get
  \((m,n)\)-dynamical systems when we pick the two sets to have
  cardinality \(m\) and~\(n\), respectively.
  \longref{Corollary}{cor:Omega_compact} applies to this diagram and
  shows that its universal action takes place on a compact Hausdorff
  space.  Ara, Exel and Katsura describe in
  \cite{Ara-Exel-Katsura:Dynamical_systems}*{Theorem~3.8} an
  \((m,n)\)-dynamical system that is universal among
  \((m,n)\)-dynamical systems on compact Hausdorff spaces.
  \longref{Corollary}{cor:Omega_compact} shows that it remains
  universal if we allow \((m,n)\)-dynamical systems on arbitrary
  topological spaces.
\end{example}

\begin{example}
  \label{exa:words}
  Let \(\Cat = (\N, +)\) be the category with a single object and
  morphisms the nonnegative integers.  A diagram
  \(F\colon \Cat\to\Grcat\) is determined by a single groupoid
  correspondence \(\Bisp\colon \Gr\leftarrow\Gr\) for an étale
  groupoid~\(\Gr\) (see \cite{Meyer:Diagrams_models}*{Section~3.4}).
  Let~\(\Gr\) be the trivial groupoid with one arrow and one object.
  Then~\(\Bisp\) is just a discrete set because the source map
  \(\Bisp \to \Gr^0\) is a local homeomorphism.  The groupoid model
  of the resulting diagram is a special case of the self-similar
  groups treated in~\cite{Meyer:Diagrams_models}*{Section~9.2}, in
  the case when the group is trivial.  It is shown there that the
  universal action takes place on the space
  \(\Omega \defeq \prod_{n\in\N} \Bisp\).  If~\(\Bisp\) is finite,
  then~\(\Omega\) is compact by Tychonoff’s Theorem.  In contrast,
  if~\(\Bisp\) is infinite, then~\(\Omega\) is not even locally
  compact.  This example shows that we need a diagram of proper
  correspondences for the groupoid model to be locally compact.
\end{example}

\begin{bibdiv}
  \begin{biblist}
    \bibselect{references}
  \end{biblist}
\end{bibdiv}
\end{document}